\newcommand{\N}{\mathbb{N}}
\newcommand{\R}{\mathbb{R}}
\newcommand{\C}{\mathbb{C}}
\newtheorem{remark}[theorem]{Remark}
\newtheorem{algorithm}{Algorithm}
\newcommand{\T}{{\rm T}}
\title{Computational framework for applying electrical impedance tomography to head imaging}
\author{V. Candiani\footnotemark[2]
\and A. Hannukainen\footnotemark[2]
\and N. Hyv\"onen\footnotemark[2]
}
\begin{document}
\maketitle

\renewcommand{\thefootnote}{\fnsymbol{footnote}}

\footnotetext[2]{Aalto University, Department of Mathematics and Systems Analysis, P.O. Box 11100, FI-00076 Aalto, Finland (valentina.candiani@aalto.fi, antti.hannukainen@aalto.fi, nuutti.hyvonen@aalto.fi). This work was supported by the Academy of Finland (decisions 312124 and 312340).}

\begin{abstract}
This work introduces a computational framework for applying absolute electrical impedance tomography to head imaging without accurate information on the head shape or the electrode positions. A library of fifty heads is employed to build a principal component model for the typical variations in the shape of the human head, which leads to a relatively accurate parametrization for head shapes with only a few free parameters. The estimation of these shape parameters and the electrode positions is incorporated in a regularized Newton-type output least squares reconstruction algorithm. The presented numerical experiments demonstrate that strong enough variations in the internal conductivity of a human head can be detected by absolute electrical impedance tomography even if the geometric information on the measurement configuration is incomplete to an extent that is to be expected in practice. 
\end{abstract}

\renewcommand{\thefootnote}{\arabic{footnote}}

\begin{keywords}
Electrical impedance tomography, inaccurate measurement model, computational head model, shape derivatives, principal components, detection of stroke
\end{keywords}

\begin{AMS}
65N21, 35R30, 65N50, 62H25
\end{AMS}

\pagestyle{myheadings}
\thispagestyle{plain}
\markboth{V. CANDIANI, A. HANNUKAINEN, AND N. HYV\"ONEN}{EIT FOR HEAD IMAGING}

\section{Introduction}
\label{sec:introduction}

{\em Electrical impedance tomography} (EIT) is a noninvasive imaging method that is based on current and voltage measurements on the boundary of the examined physical body. To be more precise, a set of contact electrodes is employed to drive current patterns into the object and the resulting electric potential is measured at (some of) the electrodes. Such measurements obviously depend on the interior conductivity of the object, and the aim of any algorithm designed for EIT is to (partially) invert this dependence and reconstruct (useful information about) the conductivity. The inverse problem of EIT is nonlinear and highly illposed; we refer to \cite{Borcea02,Cheney99,Uhlmann09} for general information about EIT and its applications. The aim of this work is to develop a computational framework for applying EIT to head imaging without accurate information on the electrode positions or the shape of the studied patient's head.

In medical applications of EIT, the exact shape of the imaged object is usually unknown. Even if one has some generic information about the average shape of the imaged part of the human body, the natural variations between different subjects are often significant: consider,~e.g.,~the head shapes and sizes over the whole human population. Unfortunately, EIT is known to be extremely sensitive to geometric mismodeling of the imaged object \cite{Barber88,Breckon88,Kolehmainen97}. The most straightforward remedy to inaccurate geometric modeling is difference imaging~\cite{Barber84}: if electrode measurements are performed at two separate times and the aim is to reconstruct the corresponding change in the conductivity, the modeling errors partially cancel out and it is possible to get reasonable reconstructions. However, waiting for a change in the conductivity distribution is not a plausible option in many applications such as detection of stroke. 

There exist a few previous algorithms for handling geometric uncertainties in absolute EIT imaging. The first one was introduced in \cite{Kolehmainen05,Kolehmainen07} for two-dimensional EIT and, intuitively speaking, it is based on compensating for the mismodeled geometry by reconstructing a mildly anisotropic conductivity. In \cite{Nissinen11,Nissinen11b}, the so-called approximation error approach~\cite{Kaipio05} was applied to EIT with geometric uncertainties. To put it short, the error caused by the mismodeling was included as an extra additive noise process in the measurement model, its statistics were estimated in advance based on heavy simulations and prior knowledge on all unknown parameters, and finally the actual inversion was performed within the Bayesian paradigm. Recently, \cite{Hyvonen17} built a polynomial surrogate for the dependence of the boundary measurements of EIT on all unknowns, including the parametrized measurement geometry, and employed this surrogate in straightforward Tikhonov regularization in two spatial dimensions. Of the aforementioned methods, the ones introduced in \cite{Kolehmainen05,Kolehmainen07} and \cite{Hyvonen17} have only been implemented in two spatial dimensions, and the approximation error approach employed in \cite{Nissinen11,Nissinen11b} requires a vast and expressive teaching sample describing the possible types of geometric mismodeling. 

In this work we extend the conceptually straightforward way of handling imprecisely known body shape and electrode positions introduced in \cite{Darde12,Darde13a,Darde13b}. The basic idea is to utilize the derivatives of the electrode potentials with respect to the exterior boundary shape and the electrode locations in a regularized Newton-type output least squares algorithm that simultaneously reconstructs all unknowns in the measurement setup. The main weakness of the algorithm in \cite{Darde13a,Darde13b} is arguably the need to employ relatively dense {\em finite element} (FE) meshes in order to overcome an instability in the computation of the shape derivatives. We facilitate this stability issue by adopting the so-called smoothened {\em complete electrode model} (CEM) from \cite{Hyvonen17b} as our forward model; consult \cite{Cheng89,Somersalo92} for information about the original CEM. Moreover, unlike in \cite{Darde13a,Darde13b} where the imaged three-dimensional objects were homogeneous in one spatial dimension, our parametrization of the imaged body is truly three-dimensional --- as is the human head. See also \cite{Jehl15} where the derivatives introduced in \cite{Darde12} were employed in simultaneous reconstruction of conductivity and electrode positions in a realistic but accurately modeled head geometry.

To be able to implement the algorithm of~\cite{Darde13a,Darde13b} for head imaging by EIT, one needs (i) an appropriate parametrization for the natural variations in head shapes and sizes over the human population and (ii) a robust and efficient way of meshing a given (parametrized) human head model to enable forward solution by a {\em finite element method} (FEM). To build the former, we employ a library of fifty human heads from \cite{Lee16} and form a {\em principal component} model for the associated geometric variations. It turns out that a few first principal components already give a reasonably accurate geometric representation. In particular, this means that we only need to estimate a few geometric parameters as a part of the reconstruction algorithm. On the other hand, the latter requirement is satisfied by introducing a customized workflow for mesh generation, compatible with attaching electrode patches anywhere on the surface of a parametrized head.

Our numerical experiments demonstrate that strong enough alterations in the internal conductivity of a human head can be reconstructed by EIT even if the electrodes are misplaced and the geometric model for the head is inaccurate to the extent that is to be expected taking into account the natural variations. In fact, it seems that the misplacement of electrodes is a worse hindrance for the application of absolute EIT to head imaging than a moderately mismodeled head shape (cf.~\cite{Hyvonen17c}): in many cases, altogether ignoring the inaccuracies in the head model and only including the estimation of the electrode locations and contact conductances in the algorithm already leads to a tolerable reconstruction of the conductivity. 

This text is organized as follows. Section~\ref{sec:CEM} reviews the smoothened CEM. The parametrized head model and the associated algorithm for forming head meshes are introduced in Section~\ref{sec:head}. Section~\ref{sec:deriv} considers the computation of the (shape) derivatives needed in our Newton-type reconstruction algorithm. The numerical experiments are documented in Section~\ref{sec:numer} and the conclusions drawn in Section~\ref{sec:conclusion}.

\section{Forward model}
\label{sec:CEM}
We employ as our forward model the smoothened CEM~\cite{Hyvonen17b} that has favorable regularity properties compared to the traditional CEM~\cite{Somersalo92}. Preliminary tests with water tank data have also indicated that the smoothened version models real-world measurements approximately as accurately as the traditional CEM~\cite{Hyvonen17b}

Consider a bounded Lipschitz domain $\Omega \subset \R^3$ and assume there are  $M \in \N \setminus \{ 1 \}$ electrodes $E_1, \dots, E_M$ attached to its boundary. The electrode patches are identified with the nonempty, connected and open surface patches that they cover. We assume the electrodes are well separated, i.e.,~$\overline{E}_m\cap \overline{E}_l = \emptyset$ if $m\not= l$, and denote $E = \cup E_m$.  The admittivity $\sigma: \Omega \to \C$ describing the electric properties of $\Omega$ is assumed to be bounded and strictly positive in the real part, that is,
\begin{equation}
\label{eq:sigma}
\sigma \in L_+^\infty(\Omega) := \big\{ \varsigma \in L^\infty(\Omega) \ | \ {\rm Re} (\varsigma ) > c \ \,  {\rm a.e.} \ {\rm in} \ \Omega \ {\rm for} \ {\rm some} \ c>0 \big\}.
\end{equation}
In other words, it is assumed that there are no ideally resistive or conductive regions of nonzero measure inside $\Omega$. As a specialty of the smoothened CEM, the {\em contact admittance} between the electrodes and the imaged object $\Omega$ is interpreted as a single function $\zeta \in L^\infty(\partial \Omega)$ such that
\begin{equation}
\label{eq:zeta}
{\rm Re} (\zeta) \geq 0,
\qquad \zeta_{\partial \Omega \setminus{\overline{E}}} \equiv 0, \qquad
{\rm Re}\big(\zeta|_{E_m}\big) \not\equiv 0 
\end{equation}
for all $m=1, \dots, M$ in the topology of $L^\infty(\partial \Omega)$. These conditions can be interpreted as follows: the contact conductance cannot be negative, the contact admittance vanishes away from the electrodes, and on each electrode there must be a region of nonzero measure where the contact conductance is positive,~i.e.,~there must be a possibility for feeding current through any of the electrodes. 

When a single measurement by an EIT device is performed, net currents $I_m\in\C $, $m=1, \dots, M$, are driven through the corresponding electrodes and (noisy versions of) the resulting constant electrode potentials $U_m \in \C$, $m=1, \dots, M$, are measured.
As it is natural to assume there are no sinks or sources inside $\Omega$, the current pattern $I = [I_1,\dots,I_M]^{\rm T}$ belongs to the mean-free subspace
\[ 
\C^M_\diamond \, := \, \Big\{J \in\C^M\,\Big|\, \sum_{m=1}^M J_m = 0\Big\}
\]
due to conservation of electric charge. The vector containing the electrode potentials, $U = [U_1,\dots,U_M]^{\rm T} \in \C^M$, is isomorphically identified with 
\begin{equation}
\label{eq:piecewise}
U \, = \, \sum_{m=1}^M U_m \chi_m \in L^2(\partial \Omega),
\end{equation}
where $\chi_m$ is the characteristic function of the respective electrode $E_m$ on $\partial \Omega$. In the following, an electrode potential pattern $U$ may be treated either as a piecewise constant function supported on $\overline{E}$ or as an element of $\C^M$; the correct interpretation should be clear from the context.

The forward problem corresponding to the smoothened CEM is as follows \cite{Hyvonen17b}: the electromagnetic potential $u$ inside $\Omega$ and the piecewise constant electrode potential $U$ weakly satisfy
\begin{equation}
\label{eq:cemeqs}
\begin{array}{ll}
\displaystyle{\nabla \cdot(\sigma\nabla u) = 0 \qquad}  &{\rm in}\;\; \Omega, \\[6pt] 
{\displaystyle {\nu\cdot\sigma\nabla u} = \zeta (U - u) } \qquad &{\rm on}\;\; \partial \Omega, \\[2pt] 
{\displaystyle \int_{E_m}\nu\cdot\sigma\nabla u\,{\rm d}S} = I_m, \qquad & m=1,\ldots,M, \\[4pt]
\end{array}
\end{equation}
where $\nu \in L^\infty(\partial \Omega, \R^3)$ denotes the exterior unit normal of $\partial \Omega$. If one chooses $\zeta$ to be constant on each electrode, defines $z_m := (1/\zeta)|_{E_m}$, $m=1, \dots, M$, and explicitly accounts for the second condition of \eqref{eq:zeta}, the forward problem \eqref{eq:cemeqs} reduces to that associated to the standard CEM (cf.,~e.g.,~\cite{Somersalo92}). Hence, the CEM can actually be considered a special case of its smoothened counterpart.

We look for the solution $(u,U)$ of \eqref{eq:cemeqs} in $\mathcal{H}^1$ defined via
\begin{equation}
  \label{eq:dirsum}
\mathcal{H}^s := H^s(\Omega) \oplus \C_{\diamond}^M, \qquad s \in \R.
\end{equation}
Note that the use of $\C_{\diamond}^M$ in \eqref{eq:dirsum} corresponds to systematically choosing the ground level of potential so that the mean of the electrode potentials is zero.
The following proposition summarizes the unique solvability of \eqref{eq:cemeqs} and the regularity properties of the associate forward solution.

\begin{proposition}
\label{prop:ex_smooth}
Under the assumptions \eqref{eq:sigma} and \eqref{eq:zeta}, the problem \eqref{eq:cemeqs} has a unique solution $(u,U) \in \mathcal{H}^1$ that depends boundedly on $I \in \C_\diamond^M$, that is,
$$
\| (u,U) \|_{\mathcal{H}^1} \leq C \| I \|_2,
$$
where $C= C(\Omega,E,\sigma,\zeta)>0$ is independent of $I$. If in addition $\sigma \in \mathcal{C}^\infty(\overline{\Omega}, \C^{n \times n})$, $\zeta \in \mathcal{C}^\infty(\partial \Omega)$ and $\partial \Omega\in \mathcal{C}^\infty$, then the solution to \eqref{eq:cemeqs} satisfies
\begin{equation}
\label{scont}
\| (u,U) \|_{\mathcal{H}^{s}} \leq C  \| I \|_2 \qquad \text{for} \ \text{any} \ s \in \R,
\end{equation}
where $C = C(\Omega, E, \sigma, \zeta, s) > 0$ is still independent of $I \in \C_\diamond^M$.
\end{proposition}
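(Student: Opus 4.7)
The plan is to treat the two assertions separately: the $\mathcal{H}^1$ well-posedness via Lax--Milgram applied to a sesquilinear form derived from \eqref{eq:cemeqs}, and the higher regularity via elliptic bootstrapping for a Robin-type boundary value problem satisfied by $u$. Both steps are standard once the correct variational setting is fixed; the main obstacle is establishing coercivity on $\mathcal{H}^1$.

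For the well-posedness, I would multiply $\nabla \cdot (\sigma \nabla u) = 0$ by a test function $\bar v$ with $v \in H^1(\Omega)$, integrate by parts, substitute the Robin-type condition on $\partial \Omega$, and pair the current condition against $\bar V \in \C^M_\diamond$. This yields the sesquilinear form
\[
B\bigl((w,W),(v,V)\bigr) := \int_\Omega \sigma \nabla w \cdot \nabla \bar v \, \dd x + \int_{\partial \Omega} \zeta (W-w)\overline{(V-v)} \, \dd S
\]
and the antilinear functional $L(v,V) := I \cdot \bar V$ on $\mathcal{H}^1$, so that a weak solution of \eqref{eq:cemeqs} is any $(u,U) \in \mathcal{H}^1$ with $B((u,U),(v,V)) = L(v,V)$ for all $(v,V) \in \mathcal{H}^1$. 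Continuity of $B$ and $L$ follows immediately from the $L^\infty$ bounds on $\sigma$ and $\zeta$ together with the continuity of the trace $H^1(\Omega) \to L^2(\partial \Omega)$. The delicate step is coercivity: by \eqref{eq:sigma} and \eqref{eq:zeta},
\[
\mathrm{Re}\, B\bigl((w,W),(w,W)\bigr) \geq c \norm{\nabla w}_{L^2(\Omega)}^2 + \int_{\partial \Omega} \mathrm{Re}(\zeta) \, |W - w|^2 \, \dd S,
\]
and I would prove, by a standard compactness-and-contradiction argument, that the right-hand side is equivalent to $\|(w,W)\|_{\mathcal{H}^1}^2$. The two key ingredients are the mean-free constraint encoded in $\C^M_\diamond$ and the third condition in \eqref{eq:zeta}, which provides a subset of positive measure in each $E_m$ on which $\mathrm{Re}(\zeta)$ is bounded below; together they rule out any nontrivial $(w,W) \in \mathcal{H}^1$ with vanishing seminorm. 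Lax--Milgram then delivers existence, uniqueness, and the announced stability bound.

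For the higher regularity claim I would treat the constant $U$ as known (its modulus being already controlled by the $\mathcal{H}^1$ estimate) and view $u$ as a solution to the Robin problem
\[
\nabla \cdot (\sigma \nabla u) = 0 \ \ \text{in} \ \Omega, \qquad \nu \cdot \sigma \nabla u + \zeta u = \zeta U \ \ \text{on} \ \partial \Omega .
\]
The crucial observation is that the boundary datum $\zeta U = \sum_{m=1}^M U_m (\zeta \chi_m)$ lies in $\mathcal{C}^\infty(\partial \Omega)$: since $\zeta \in \mathcal{C}^\infty(\partial \Omega)$ vanishes identically on $\partial \Omega \setminus \overline E$, it must vanish to infinite order on $\partial E_m$, so each localized factor $\zeta \chi_m$ is smooth across the electrode edges. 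Classical elliptic regularity for second-order problems with smooth Robin-type boundary conditions on a smooth domain (Agmon--Douglis--Nirenberg, or the Lions--Magenes scale) then promotes $u$ from $H^1(\Omega)$ to $H^s(\Omega)$ for every $s \in \R$, with norm controlled by $\|u\|_{H^1(\Omega)} + \|U\|_2 \lesssim \|I\|_2$ via the first part. This establishes \eqref{scont}.
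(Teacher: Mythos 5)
Your argument is correct and follows essentially the same route as the paper, whose ``proof'' is a one-line citation to the reference \cite{Hyvonen17b}: there, too, existence, uniqueness and the $\mathcal{H}^1$ bound come from Lax--Milgram applied to the sesquilinear form you write down (with coercivity supplied by the mean-free constraint in $\C^M_\diamond$ together with the positivity of ${\rm Re}(\zeta)$ on a positive-measure subset of each electrode), and the higher regularity comes from elliptic theory for the Robin problem with smooth coefficients. Your key observation --- that $\zeta\in\mathcal{C}^\infty(\partial\Omega)$ vanishing identically on $\partial\Omega\setminus\overline{E}$ must vanish to infinite order on $\partial E$, so that the boundary datum $\zeta U$ is smooth despite $U$ being only piecewise constant --- is exactly what makes the bootstrap go through, so no gap remains.
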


\begin{proof}
The assertions follow by slightly modifying the proofs of \cite[Theorem~2.2 \& Theorem~2.5]{Hyvonen17b} that are based on the use of quotient spaces instead of forcing the electrode potential patterns to be mean-free.
\end{proof}

The latter part of Proposition~\ref{prop:ex_smooth} could actually be made more explicit: the solution to \eqref{eq:cemeqs} depends boundedly on the current pattern in the topology of $\mathcal{H}^s$ for a given $s\geq1$ assuming that $\sigma$, $\zeta$ and $\partial \Omega$ are in certain regularity classes dictated by $s$; see~\cite[Theorem~2.5]{Hyvonen17b} for more information. This is the main reason why we resort to the smoothened CEM. By modeling the contact conductance as a smooth function, one achieves higher Sobolev regularity for the forward solution and thereby faster convergence of the numerical shape derivatives needed in our Newton type reconstruction algorithm; see~\cite{Hyvonen17b} for more information. For comparison, the highest attainable regularity for the forward solution of the traditional CEM is $(u,U) \in \mathcal{H}^{2-\epsilon}$, $\epsilon > 0$, even if $\partial \Omega$ and $\sigma$ are infinitely smooth.

\section{Parametrized head model}
\label{sec:head}
To build a realistic model for the variations in the shape and size of the human head, we exploit the library of $n=50$ heads from \cite{Lee16}; see the top row of Figure~\ref{eq:model_heads} for two examples. Each of these heads allows a star-shaped parametrization if the origin is chosen to be (approximately) the center of mass of its bottom face. To be more precise, we represent the crown of the $j$th head as the graph of a function
\begin{equation}
  \label{eq:jth_head}
S_j:
\left\{
\begin{array}{l}
\mathbb{S}_+ \to \R^3, \\[1mm]
\hat{x} \mapsto r_j(\hat{x}) \, \hat{x},
\end{array}
\right.
\end{equation}
where $\mathbb{S}_+$ is the upper unit hemisphere, i.e.,
$$
\mathbb{S}_+ = \big\{ x \in \R^3 \; | \; \| x \|_2 = 1 \ {\rm and} \ x_3 > 0 \big\},
$$
and $r_j:  \mathbb{S}_+ \to \R_+$ gives the distance from the origin to the surface of the $j$th head as a function of the direction $\hat{x} \in \mathbb{S}_+$. Because $r_j$ completely determines $S_j$, we somewhat misleadingly call $r_1, \dots, r_n$ the functional representations for our library of heads. The bottom row of Figure~\ref{eq:model_heads} shows such representations for the two example heads on the top row as functions of the polar $\theta \in (0, \pi/2)$ and azimuthal $\phi \in [0, 2\pi)$ angles.

  \begin{figure}{t}
    \center{
      {\includegraphics[width=6cm]{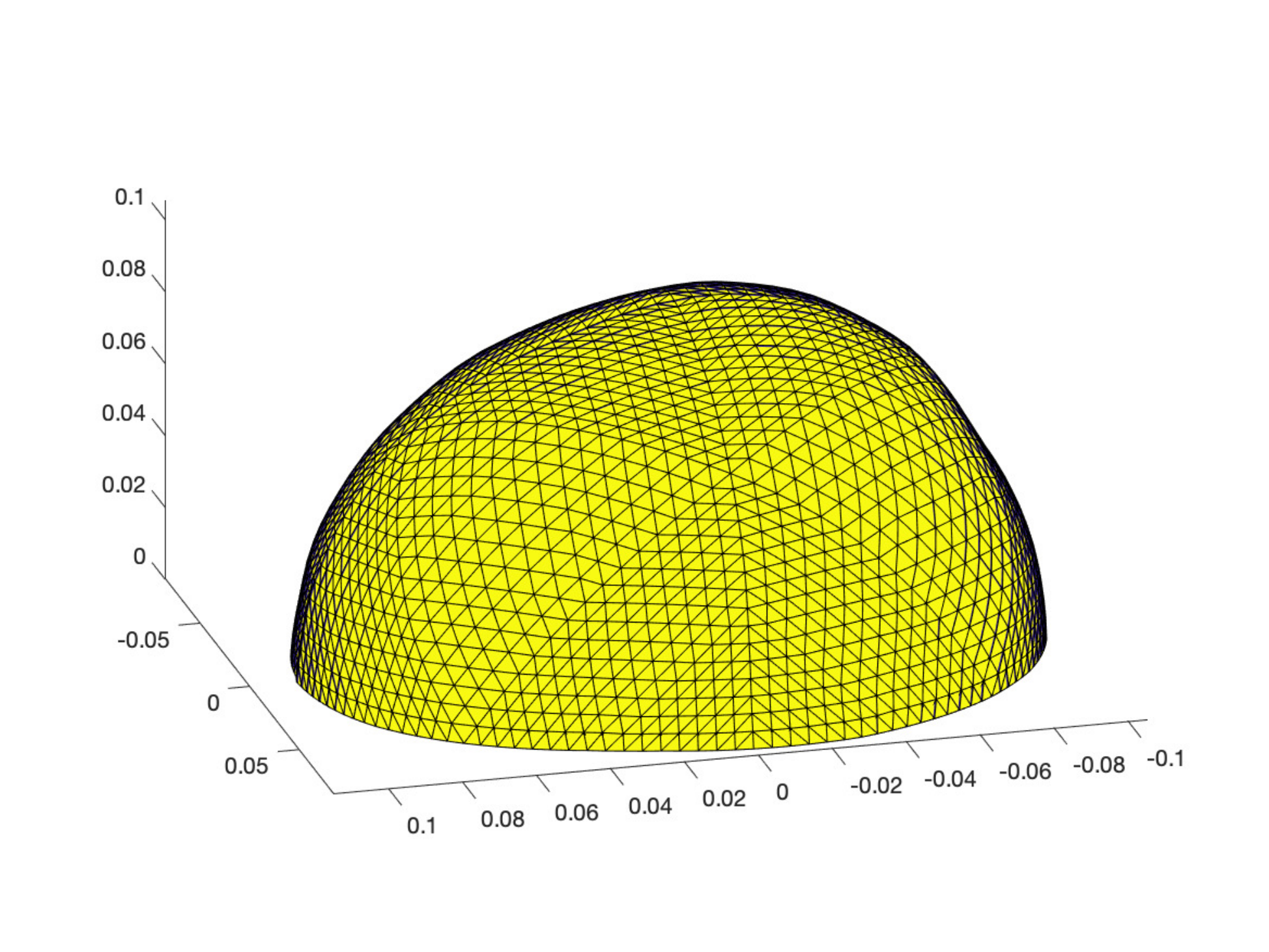}}
{\includegraphics[width=6cm]{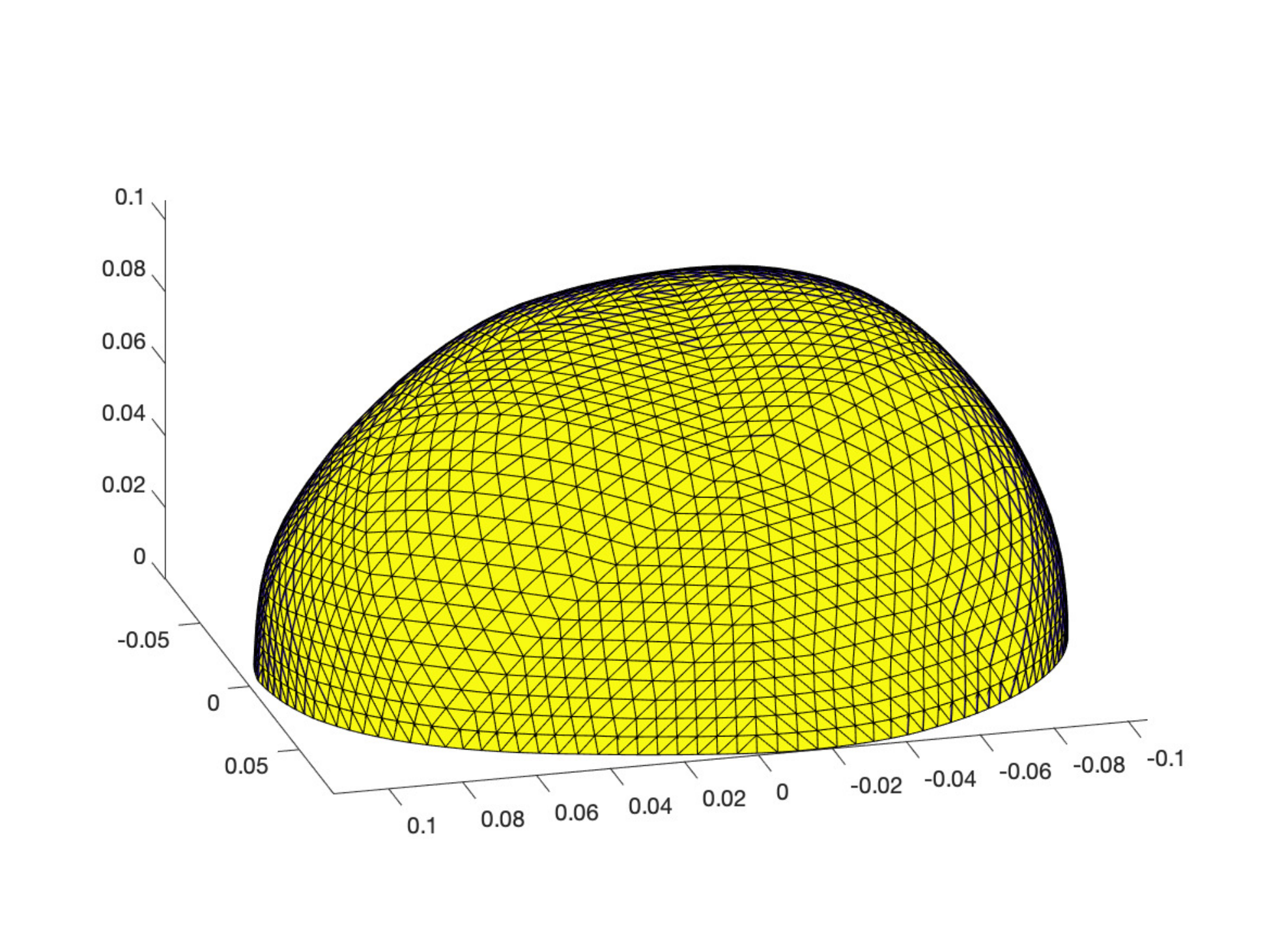}}
    }

\center{
  {\includegraphics[width=5.5cm]{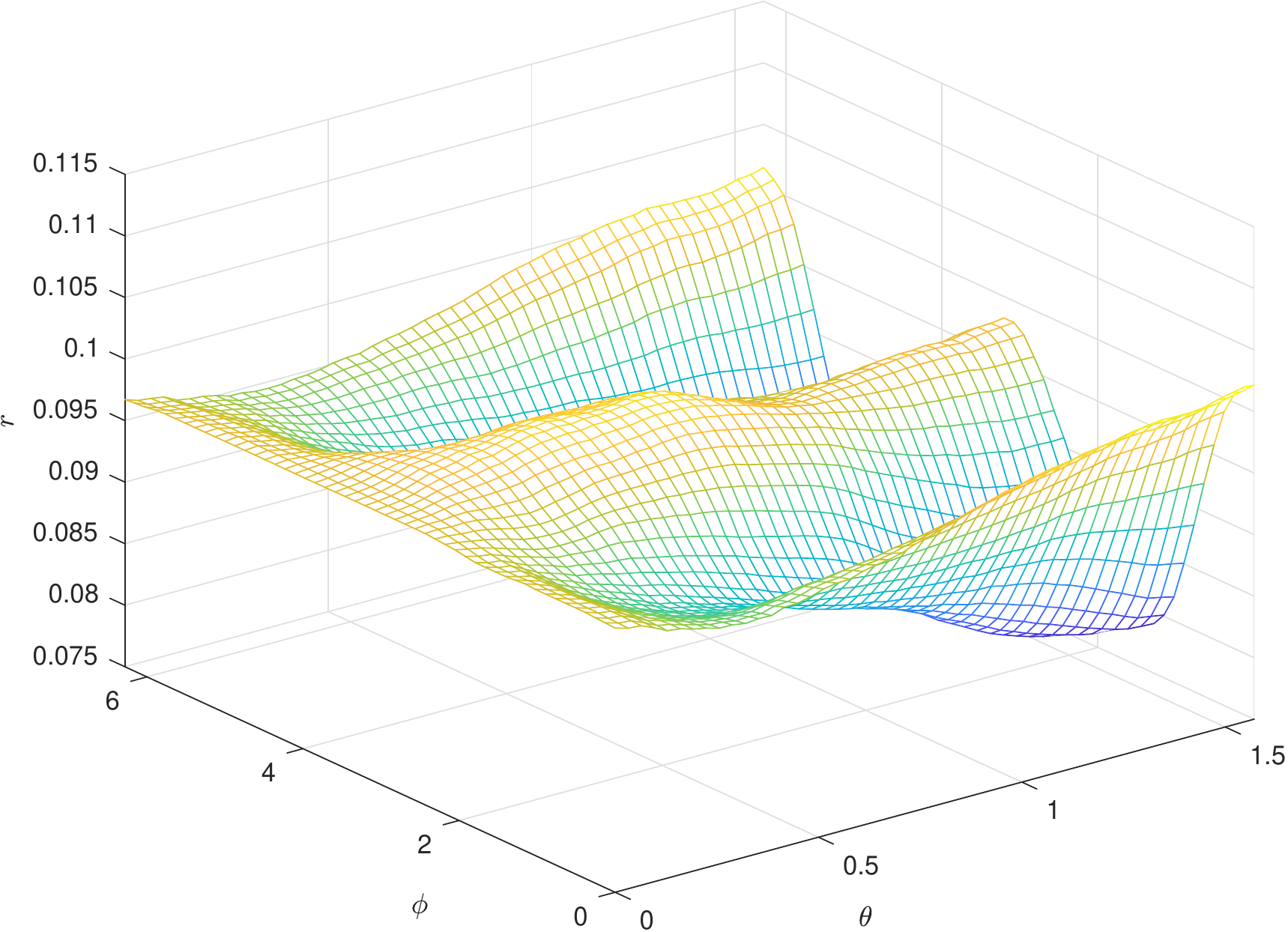}}
  \quad \quad
      {\includegraphics[width=5.5cm]{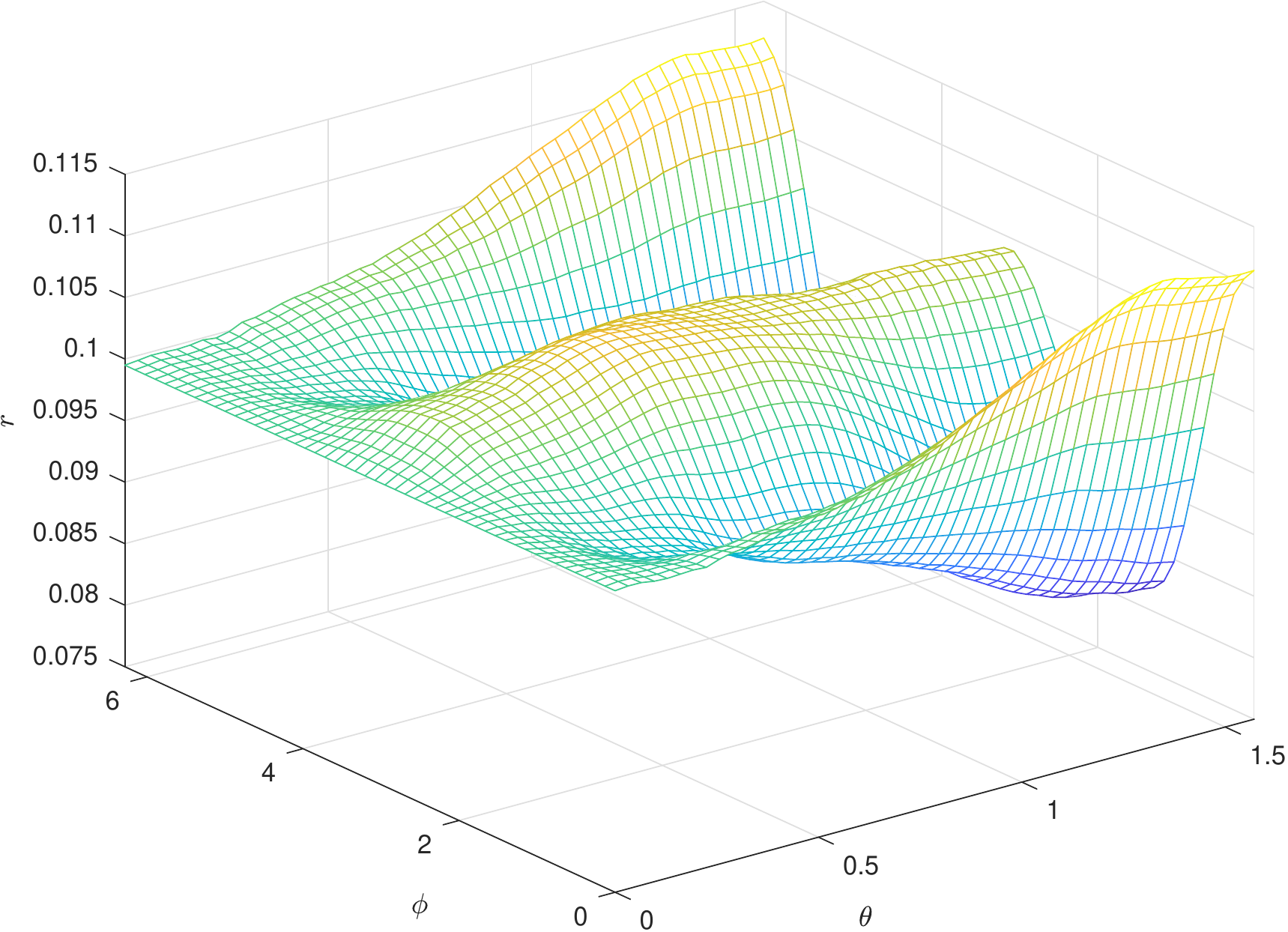}}
    }    
  \caption{Top row: two heads from the library of \cite{Lee16}. Bottom row: corresponding representations $r_j: \mathbb{S}_+ \to \R_+$ parametrized as functions of the polar and azimuthal angles. The unit of length is meter.}
\label{eq:model_heads}
\end{figure}

\subsection{Optimal parametrization}
\label{sec:optimpara}
Our aim is to form an accurate parametrization for the variations in the family of head representations $r_1, \dots,  r_n$ using as few degrees of freedom as possible. To this end, we set
\begin{equation}
  \label{eq:mean_etc}
\bar{r} = \frac{1}{n} \sum_{j=1}^n r_j \qquad {\rm and} \qquad \rho_j = r_j - \bar{r}, \quad j=1, \dots, n,
\end{equation}
i.e.,~$\bar{r}$ describes the average head and $\rho_j, \dots, \rho_n$ are the corresponding perturbations that define the employed library of heads. We then look for an $\tilde{n}$-dimensional subspace $V_{\tilde{n}} \subset  H^1(\mathbb{S}_+)$, $1 \leq \tilde{n} \leq n$,
that satisfies
\begin{equation}
  \label{eq:minim}
\sum_{j=1}^{n} \min_{\eta \in V_{\tilde{n}}} \| \rho_j - \eta \|_{H^1(\mathbb{S}_+)}^2 \leq \sum_{j=1}^{n} \min_{\eta \in W} \| \rho_j - \eta \|_{H^1(\mathbb{S}_+)}^2,
\end{equation}
for all $\tilde{n}$-dimensional subspaces $W$ of the Sobolev space $H^1(\mathbb{S}_+)$. In other words, the aim is to find an $\tilde{n}$-dimensional subspace that on average contains the best approximations for the perturbations $\rho_j, \dots, \rho_n$ if the quality of fit is measured by the squared norm of $H^1(\mathbb{S}_+)$. Here, it is implicitly assumed that $\rho_j, \dots, \rho_n \in H^1(\mathbb{S}_+)$.

For simplicity, suppose the functions $\rho_1, \dots, \rho_n$ are linearly independent. In the following lemma, we utilize the matrix $R = \R^{n \times n}$ defined componentwise as
$$
R_{ij} = ( \rho_i, \rho_j)_{H^1(\mathbb{S}_+)}, \qquad i,j = 1, \dots, n,
$$
where $( \cdot, \cdot)_{H^1(\mathbb{S}_+)}: H^1(\mathbb{S}_+) \times H^1(\mathbb{S}_+) \to \R$ is the real $H^1$ inner product on $\mathbb{S}_+$. It is straightforward to check that $R$ is symmetric and positive definite. The eigenvalues of $R$ are repeated according to their multiplicity and denoted by $\lambda_1 \geq \lambda_2 \geq \cdots \geq \lambda_n > 0$. A corresponding set of orthonormal eigenvectors is $v_1, \dots, v_n \in \R^n$. We also adopt the notation $\rho = [\rho_1, \dots, \rho_n]^{\rm T}: \mathbb{S}_+ \to \R^n$.

\begin{lemma}
  \label{lemma:prcomp}
  Assume that $\rho_1, \dots, \rho_n \in H^1(\mathbb{S}_+)$ are linearly independent and $1 \leq \tilde{n} \leq n$. An $\tilde{n}$-dimensional subspace $V_{\tilde{n}} \subset L^2(\mathbb{S}_+)$  that satisfies \eqref{eq:minim} is given by
$$
V_{\tilde{n}} = {\rm span} \{\hat{\rho}_1, \dots, \hat{\rho}_{\tilde{n}} \},
$$
where $\hat{\rho}_k := w_k^{\rm T}\rho$, $k=1, \dots, \tilde{n}$, are mutually orthonormal functions in $H^1(\mathbb{S}_+)$ with 
\begin{equation}
\label{eq:sceig}
w_k = \frac{1}{\sqrt{\lambda_k}} \, v_k, \qquad k=1, \dots, \tilde{n},
\end{equation}
being suitably scaled versions of the first $\tilde{n}$ eigenvectors for $R \in \R^{n \times n}$.
\end{lemma}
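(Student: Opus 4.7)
The plan is to rewrite the minimization as a trace-maximization over $H^1$-orthonormal systems, and then reduce it to a finite-dimensional eigenvalue problem for $R$. For any $\tilde{n}$-dimensional subspace $W \subset H^1(\mathbb{S}_+)$ I would pick an orthonormal basis $\eta_1, \dots, \eta_{\tilde{n}}$ with respect to $(\cdot,\cdot)_{H^1(\mathbb{S}_+)}$. The best approximation of $\rho_j$ in $W$ is its orthogonal projection $\sum_{k=1}^{\tilde{n}}(\rho_j,\eta_k)_{H^1(\mathbb{S}_+)}\eta_k$, so Pythagoras yields
$$
\min_{\eta \in W}\|\rho_j - \eta\|_{H^1(\mathbb{S}_+)}^2 \;=\; \|\rho_j\|_{H^1(\mathbb{S}_+)}^2 \;-\; \sum_{k=1}^{\tilde{n}}(\rho_j,\eta_k)_{H^1(\mathbb{S}_+)}^2.
$$
Summing over $j$, the task becomes maximization of $F(\eta_1,\dots,\eta_{\tilde{n}}) := \sum_{k,j}(\rho_j,\eta_k)_{H^1(\mathbb{S}_+)}^2$ over $H^1(\mathbb{S}_+)$-orthonormal $\tilde{n}$-tuples.

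Next I would observe that only the projection of each $\eta_k$ onto $U := \mathrm{span}\{\rho_1,\dots,\rho_n\}$ contributes to $F$, while replacing $\eta_k$ by its $U$-projection can only decrease $\|\eta_k\|_{H^1(\mathbb{S}_+)}$ and hence preserves (or relaxes) the orthonormality constraint; so without loss of generality $\eta_k = a_k^{\T}\rho$ for some $a_k \in \R^n$. A direct computation gives $(\rho_j,\eta_k)_{H^1(\mathbb{S}_+)} = (R a_k)_j$, whence $\sum_j(\rho_j,\eta_k)_{H^1(\mathbb{S}_+)}^2 = a_k^{\T} R^2 a_k$, while the orthonormality reads $a_k^{\T} R a_l = \delta_{kl}$. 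The linear independence of $\rho_1,\dots,\rho_n$ makes $R$ symmetric positive definite, so the substitution $b_k = R^{1/2} a_k$ is well defined and turns the problem into maximizing $\sum_{k=1}^{\tilde{n}} b_k^{\T} R b_k$ over Euclidean-orthonormal tuples $b_1,\dots,b_{\tilde{n}} \in \R^n$.

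This is the classical Ky Fan trace-maximization (equivalently, a consequence of the Courant--Fischer min-max principle): the maximum equals $\lambda_1 + \cdots + \lambda_{\tilde{n}}$ and is attained at $b_k = v_k$. Undoing the substitution gives $a_k = R^{-1/2} v_k = \lambda_k^{-1/2} v_k = w_k$ and hence $\eta_k = w_k^{\T}\rho = \hat{\rho}_k$. A one-line verification, $(\hat{\rho}_k,\hat{\rho}_l)_{H^1(\mathbb{S}_+)} = w_k^{\T} R w_l = \lambda_l\, v_k^{\T} v_l /\sqrt{\lambda_k \lambda_l} = \delta_{kl}$, confirms that $\hat{\rho}_1,\dots,\hat{\rho}_{\tilde{n}}$ form an $H^1(\mathbb{S}_+)$-orthonormal system, closing the argument. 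The one step that requires the most care is the reduction to $\eta_k \in U$ together with the Ky Fan maximization; the rest is algebraic bookkeeping around $R$ and its eigendecomposition.
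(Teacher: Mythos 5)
Your proof is correct and follows essentially the same route as the paper's: reduction via the Hilbert projection theorem to maximizing the sum of squared projections, restriction to ${\rm span}\{\rho_1,\dots,\rho_n\}$, the reformulation $a_k^{\T}R^2a_k$ under the constraint $a_k^{\T}Ra_l=\delta_{kl}$, and the substitution $b_k=R^{1/2}a_k$ leading to the eigenvector solution. Your explicit appeal to the Ky Fan trace maximization merely spells out the step the paper leaves as "straightforward," and your handling of the reduction to the span is no less rigorous than the paper's own.
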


\begin{proof}
  First of all, any $\tilde{n}$-dimensional subspace $V_{\tilde{n}} \subset H^1(\mathbb{S}_+)$ can of course be given as a linear span of $\tilde{n}$ functions, say, $\hat{\rho}_1, \dots, \hat{\rho}_{\tilde{n}} \in H^1(\mathbb{S}_+)$ that are orthonormal in the inner product of $H^1(\mathbb{S}_+)$. Hence, by the Hilbert projection theorem,
  \begin{align}
    \label{eq:sqrsum}
  \sum_{j=1}^{n} \min_{\eta \in V_{\tilde{n}}} \| \rho_j - \eta \|_{H^1(\mathbb{S}_+)}^2 &=
  \sum_{j=1}^{n} \Big\| \rho_j - \sum_{k=1}^{\tilde{n}} (\rho_j, \hat{\rho}_k)_{H^1(\mathbb{S}_+)}  \hat{\rho}_k \Big\|_{H^1(\mathbb{S}_+)}^2 \nonumber \\
  &= \sum_{j=1}^n \| \rho_j \|_{H^1(\mathbb{S}_+)}^2 -  \sum_{j=1}^n \sum_{k=1}^{\tilde{n}} (\rho_j, \hat{\rho}_k)_{H^1(\mathbb{S}_+)}^2.
  \end{align}
Finding $V_{\tilde{n}}$ that satisfies $\eqref{eq:minim}$ is thus equivalent to introducing orthonormal functions $\hat{\rho}_1, \dots, \hat{\rho}_{\tilde{n}} \in H^1(\mathbb{S}_+)$ that maximize the sum of squared projections
  \begin{equation}
    \label{eq:sqproj}
  \sum_{j=1}^n \sum_{k=1}^{\tilde{n}} (\rho_j, \hat{\rho}_k)_{H^1(\mathbb{S}_+)}^2
  \end{equation}
appearing as the second term on the right-hand side of \eqref{eq:sqrsum}.
  
It follows easily from \eqref{eq:minim} that an optimal $V_{\tilde{n}}$ must, in fact, be  a subspace of ${\rm span}\{\rho_1, \dots, \rho_n\}$: otherwise one could reduce the dimension of $V_{\tilde{n}}$ by cutting away its component orthogonal to ${\rm span}\{\rho_1, \dots, \rho_n\}$ without affecting the value on the left-hand side of \eqref{eq:minim}. Hence, the maximizing orthonormal functions for \eqref{eq:sqproj}, if they exist, can be written as
$$
\hat{\rho}_k =  w_k^{\rm T} \rho, \qquad k=1, \dots, \tilde{n},
$$
for some $w_k \in \R^n$, $k=1, \dots, \tilde{n}$. The previously imposed condition that $\hat{\rho}_1, \dots, \hat{\rho}_{\tilde{n}}$ are orthonormal is equivalent to asking that
\begin{equation}
\label{eq:constraint}
(\hat{\rho}_l, \hat{\rho}_k)_{H^1(\mathbb{S}_+)} =  w_l^{\rm T} \! R  w_k  = \delta_{lk}, \qquad l,k=1, \dots, \tilde{n},
\end{equation}
i.e., the coefficient vectors $w_1, \dots, w_{\tilde{n}} \in \R^n$ are required to be orthonormal in the inner product induced by the symmetric and positive definite matrix $R \in \R^{n \times n}$.

To finalize the proof, we algebraically manipulate the expression \eqref{eq:sqproj} into a matrix form:
\begin{align}
  \label{eq:rayleigh}
\sum_{j=1}^n \sum_{k=1}^{\tilde{n}} (\rho_j, \hat{\rho}_k)_{H^1(\mathbb{S}_+)}^2
& = \sum_{k=1}^{\tilde{n}} \sum_{j=1}^n \Big( \sum_{i=1}^n (\rho_j, \rho_i)_{H^1(\mathbb{S}_+)} (w_k)_i\Big)^2 \nonumber \\
& = \sum_{k=1}^{\tilde{n}} \| R w_k \|_2^2 = \sum_{k=1}^{\tilde{n}} w_k^{\rm T} R^2 w_k
\end{align}
as $R$ is symmetric. It is straightforward to show that this expression is maximized under the constraint \eqref{eq:constraint} if the vectors $w_1, \dots, w_{\tilde{n}} \in \R^n$ are chosen according to \eqref{eq:sceig}: one can, e.g., set $z_k = R^{1/2} w_k$ in order to rewrite the maximization problem in a more standard form and subsequently deduce that $z_k = v_k$, $k=1,\dots, \tilde{n}$. This completes the proof.
\end{proof}

\begin{figure}
  \center{
    \includegraphics[width=7cm]{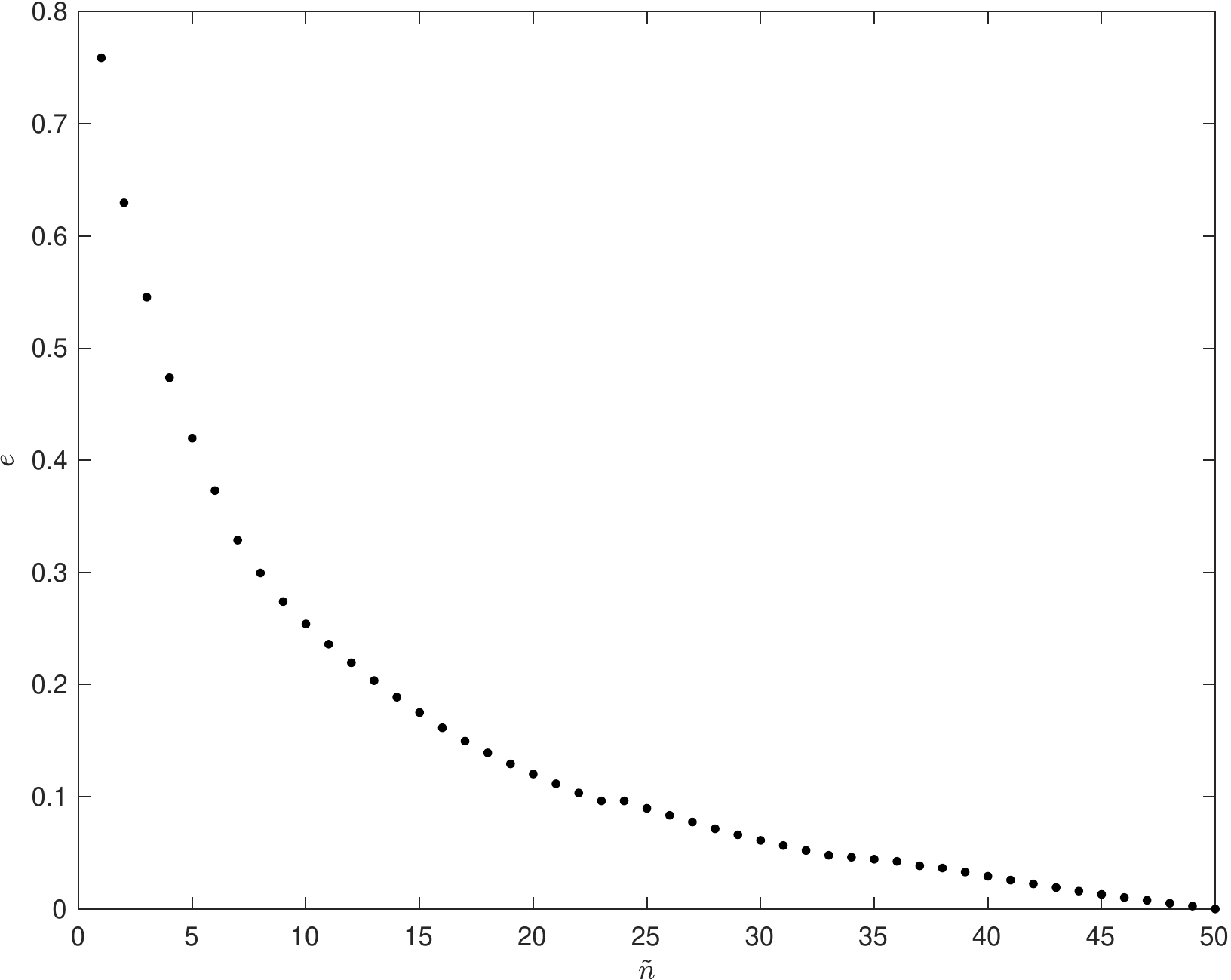}
    } 
  \caption{The relative representation error $e(\tilde{n})$ defined by \eqref{eq:repr_error} as a function of the dimension of the optimal subspace $V_{\tilde{n}}$.}
\label{fig:error}
\end{figure}

Notice that
$$
\sum_{j=1}^n \| \rho_j \|_{H^1(\mathbb{S}_+)}^2 = \, {\rm tr}(R) \, = \, \sum_{j=1}^n \lambda_j.
$$
In consequence, it follows from \eqref{eq:sqrsum} and \eqref{eq:rayleigh} that the left-hand side of \eqref{eq:minim} takes the value
$$
\sum_{j=1}^n \| \rho_j \|_{H^1(\mathbb{S}_+)}^2 - \sum_{j=1}^n \sum_{k=1}^{\tilde{n}} (\rho_j, \hat{\rho}_k)_{H^1(\mathbb{S}_+)}^2 = \sum_{j=1}^n \lambda_j - \sum_{k=1}^{\tilde{n}} w_k^{\rm T} R^2 w_k = \sum_{j=\tilde{n}+1}^n \lambda_j
$$
for an optimal $\tilde{n}$-dimensional subspace $V_{\tilde{n}}$. In particular, one has a simple formula for the relative squared error in the optimal representations for the considered library of heads:
\begin{equation}
\label{eq:repr_error}
e(\tilde{n}) :=
\frac{\sum_{j=1}^{n} \min_{\rho \in V_{\tilde{n}}} \| \rho_j - \rho \|_{H^1(\mathbb{S}_+)}^2}
     {\sum_{j=1}^{n} \| \rho_j \|_{H^1(\mathbb{S}_+)}^2}
      = \frac{\sum_{j=\tilde{n}+1}^n \lambda_j}{\sum_{j=1}^n \lambda_j}.
\end{equation}
Figure~\ref{fig:error} shows $e(\tilde{n})$ as a function of $\tilde{n}$. In particular, the optimal subspace $V_{\tilde{n}}$ is able to explain $58$\% of the (squared) geometric variations in the head library already for $\tilde{n}=5$.

Encouraged by Lemma~\ref{lemma:prcomp} and Figure~\ref{fig:error}, we introduce a parametrization for the crown of the human head as
\begin{equation}
\label{eq:theparam}
S(\hat{x}; \alpha) = \Big( \bar{r}(\hat{x}) +
\sum_{k=1}^{\tilde{n}} \alpha_k \hat{\rho}_k(\hat{x}) \Big) \hat{x}, \qquad \hat{x} \in \mathbb{S}_+,
\end{equation}
where $\bar{r}$ is the mean representation from \eqref{eq:mean_etc}, $\hat{\rho}_1, \dots, \hat{\rho}_{\tilde{n}}$ are the optimal orthonormal basis functions introduced in Lemma~\ref{lemma:prcomp}, and $\alpha \in \R^{\tilde{n}}$ contains the free shape parameters. In the numerical experiments of Section~\ref{sec:numer}, the number of free parameters appearing in \eqref{eq:theparam} is chosen to be $\tilde{n}=5$; the corresponding basis functions $\hat{\rho}_k$, $k=1, \dots, 5$, are visualized together with the average head in Figure~\ref{fig:pricomp}.

\begin{figure}
  \center{
  {\includegraphics[width=4cm]{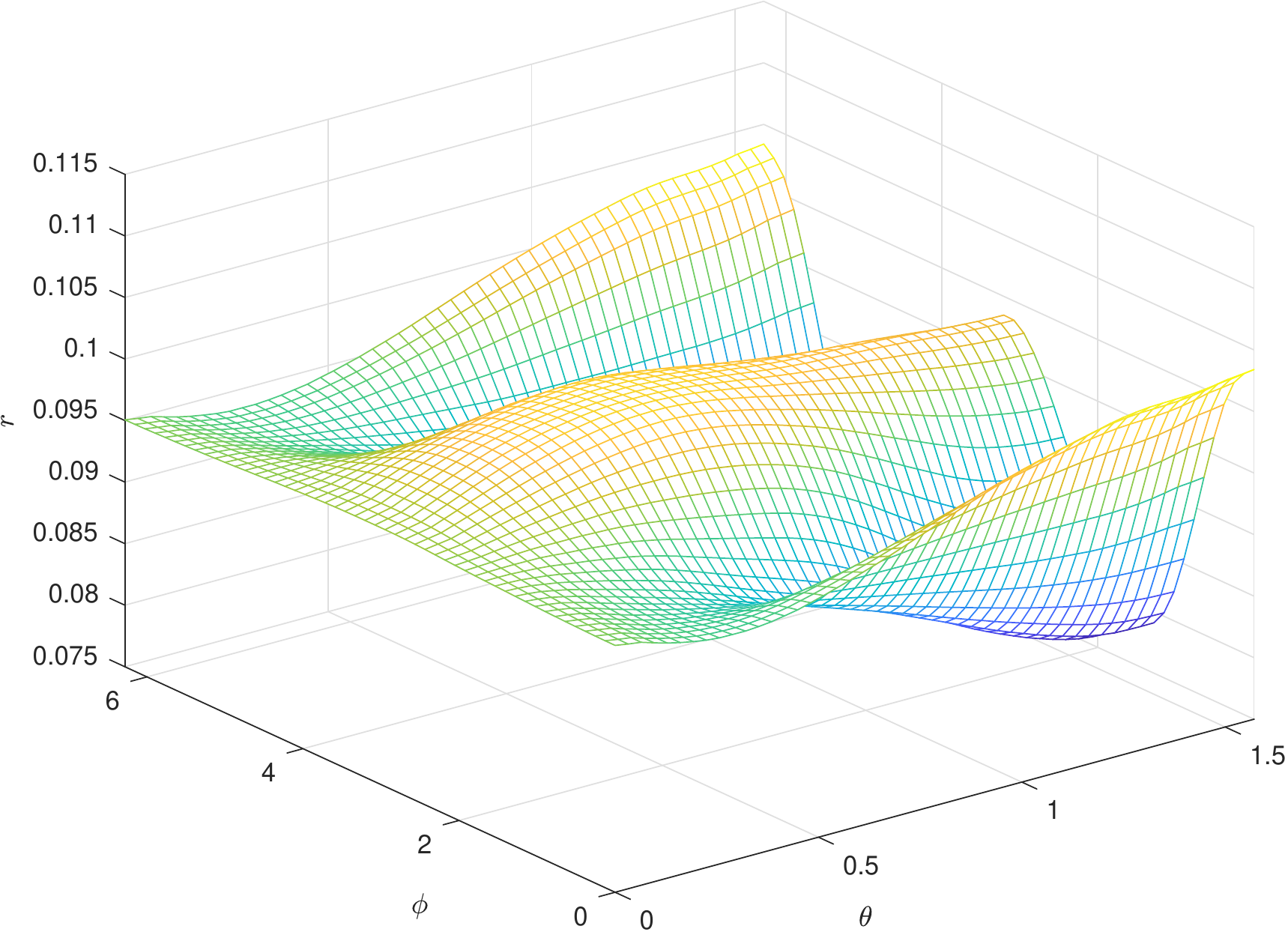}}
  \quad
      {\includegraphics[width=4cm]{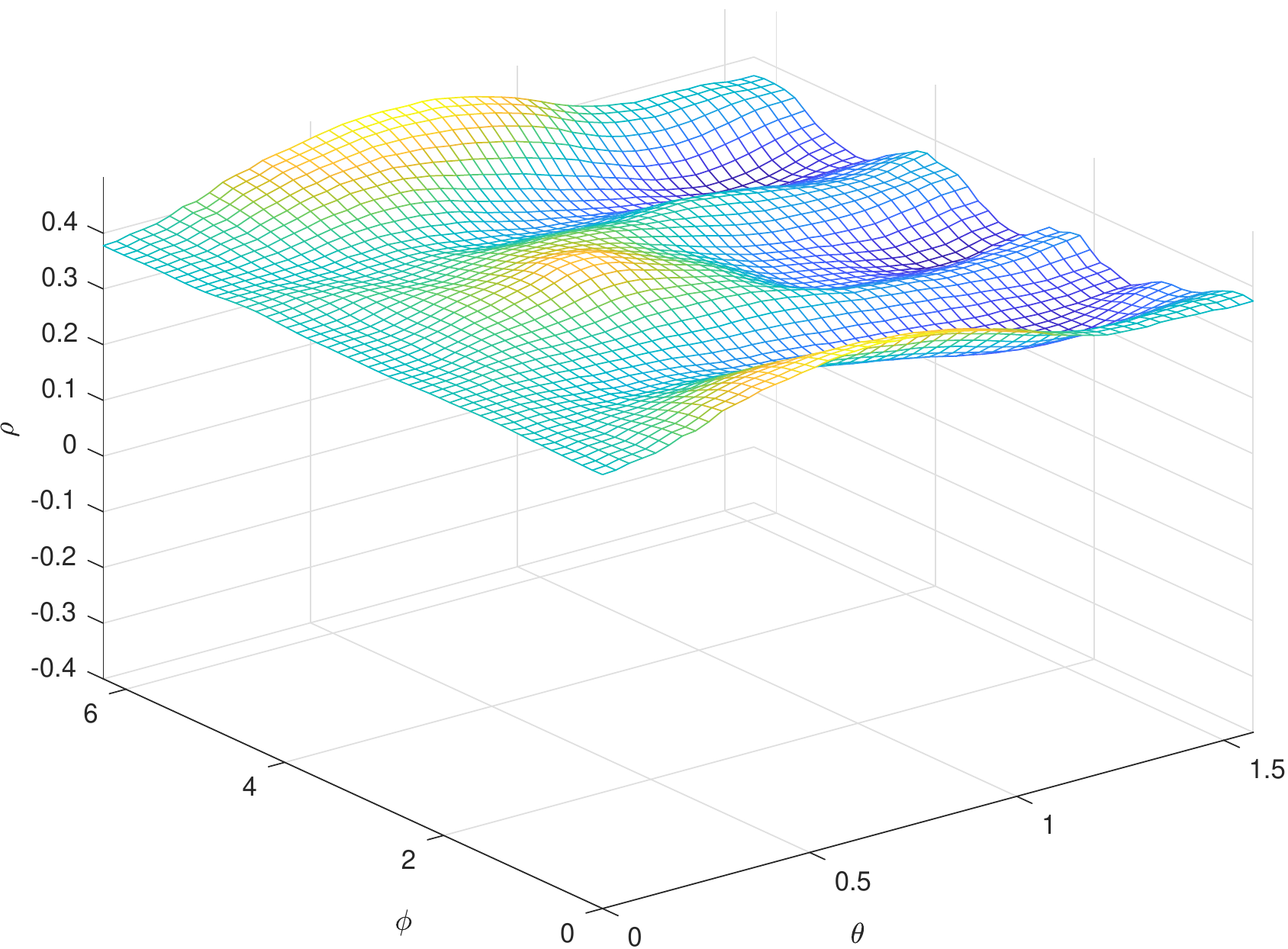}}
      \quad
      \includegraphics[width=4cm]{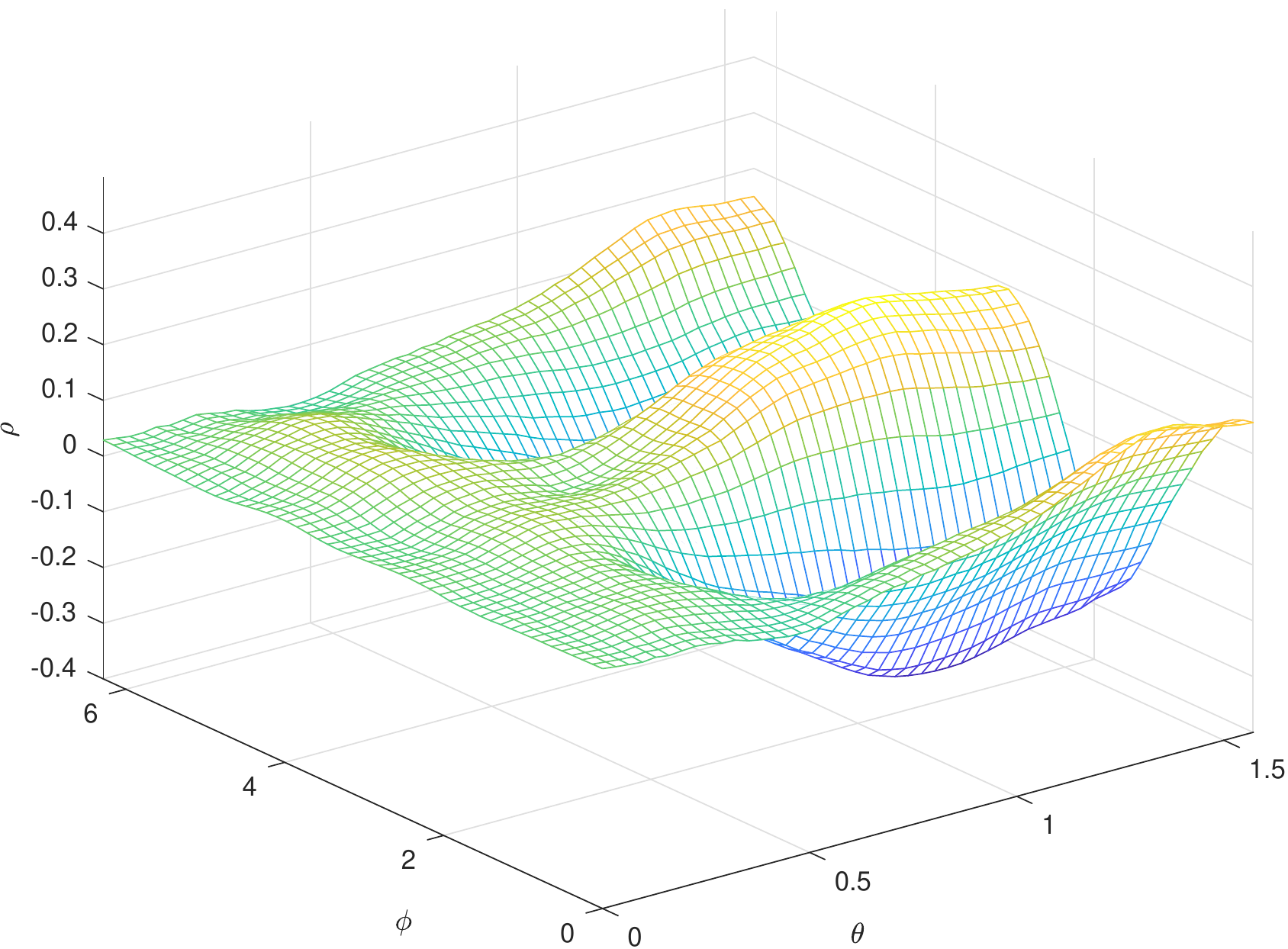}
  }
  \center{
{\includegraphics[width=4cm]{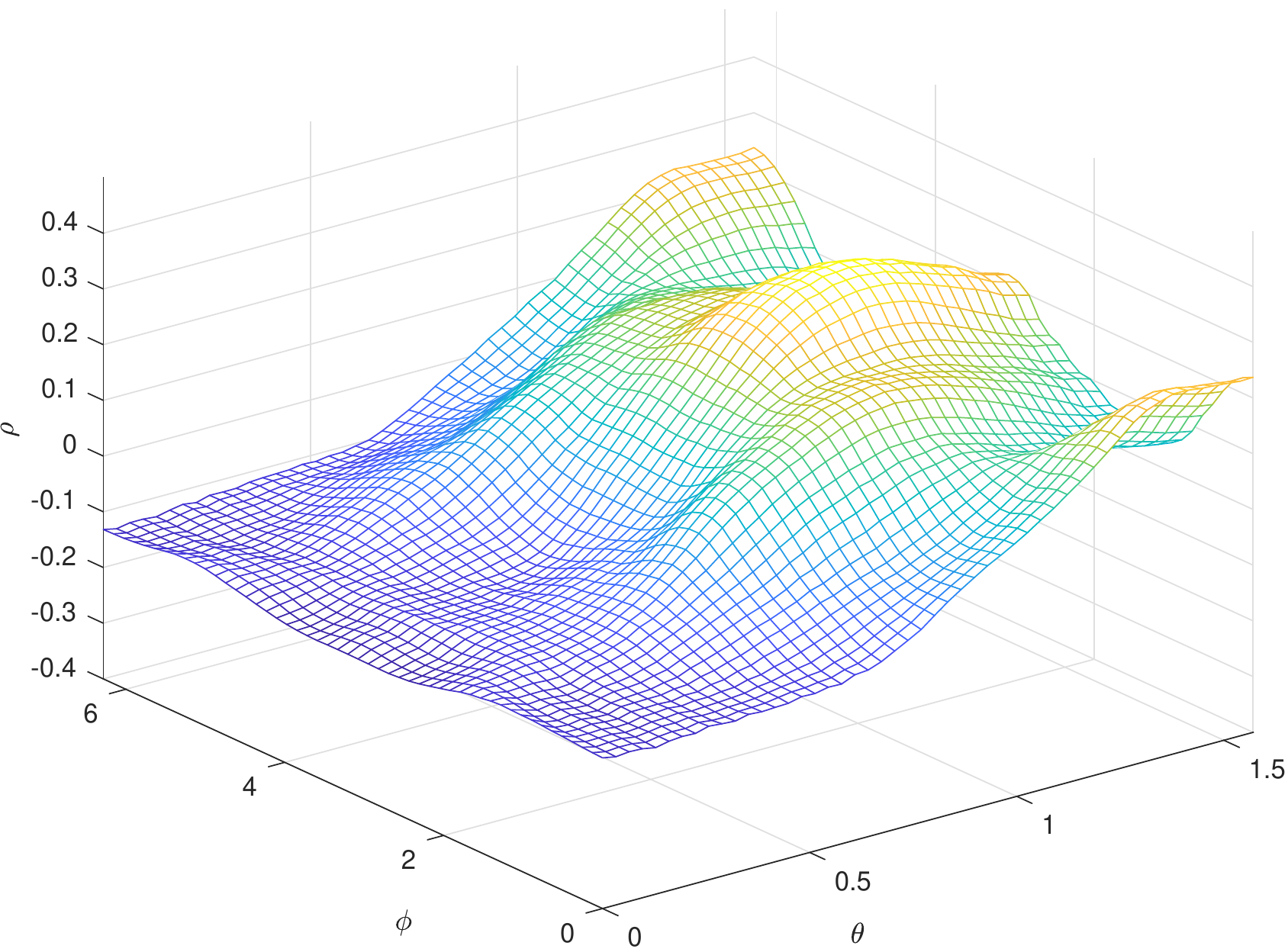}}
  \quad
      {\includegraphics[width=4cm]{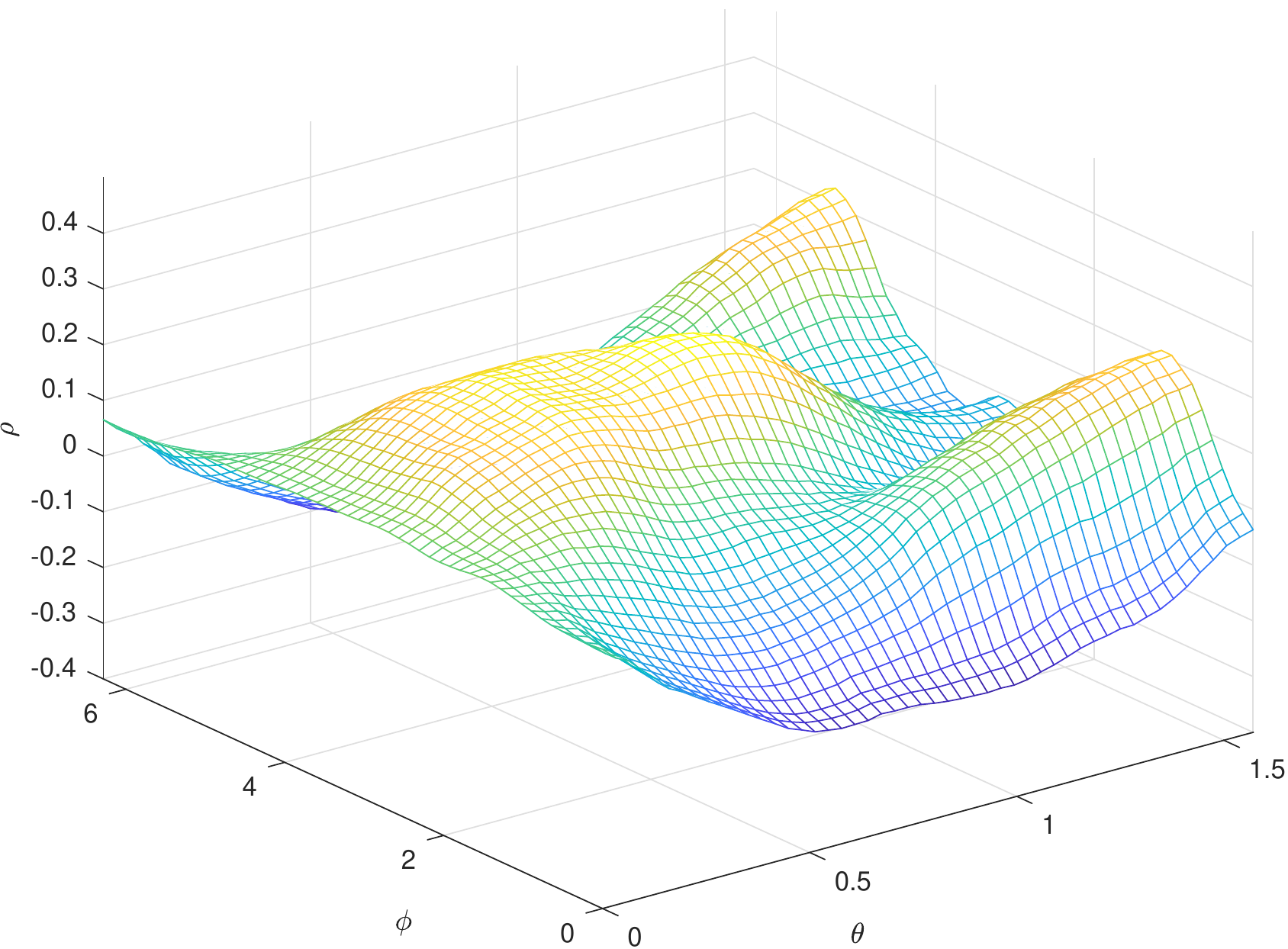}}
      \quad
      \includegraphics[width=4cm]{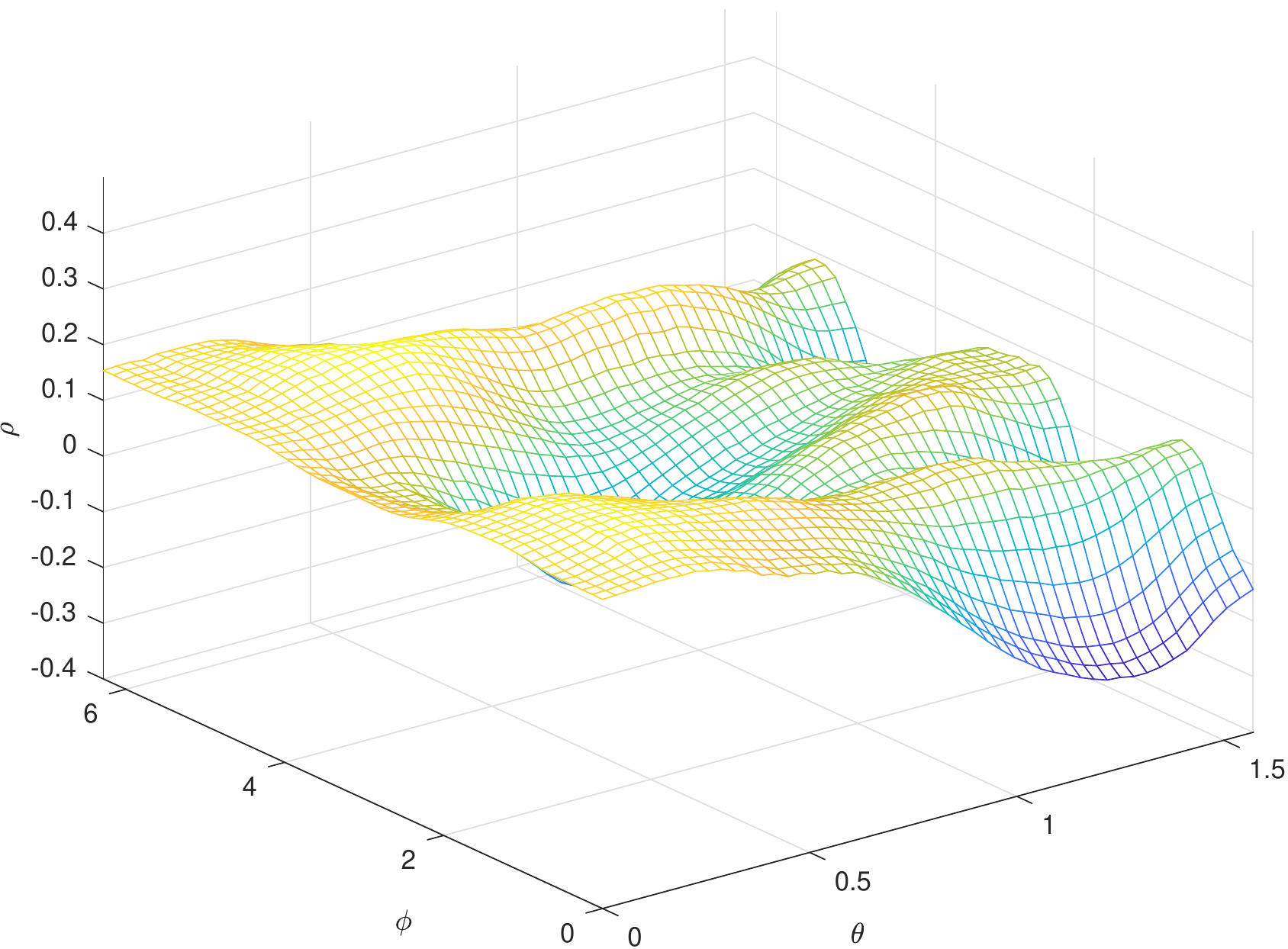}
  }
  \caption{The representation $\bar{r}: \mathbb{S}_+ \to \R_+$ for the mean head together with the first five basis functions for the optimal subspace $V_{\tilde{n}}$ as functions of the polar and azimuthal angles ($\tilde{n}\geq5$). Top left: the mean head. Top center: $\hat{\rho}_1:  \mathbb{S}_+ \to \R$. Top right:  $\hat{\rho}_2: \mathbb{S}_+ \to \R$. Bottom left:  $\hat{\rho}_4: \mathbb{S}_+ \to \R$. Bottom center:  $\hat{\rho}_5: \mathbb{S}_+ \to \R$. Bottom right: $\hat{\rho}_6: \mathbb{S}_+ \to \R$.}
\label{fig:pricomp}
\end{figure}

\begin{remark}
\label{remark:remark}
It is easy to see that the minimizing $\tilde{n}$-dimensional subspace $V_{\tilde{n}}$ for \eqref{eq:minim} is unique if $\lambda_{\tilde{n}} > \lambda_{\tilde{n}+1}$. Moreover, Lemma~\ref{lemma:prcomp} holds in exactly the same form even if $\rho_1, \dots, \rho_n$ are linearly dependent as long as there are at least $\tilde{n}$ linearly independent functions among them. On the other hand, if $\tilde{n}$ is larger or equal to the number of linearly independent functions among $\rho_1, \dots, \rho_n$, then the left-hand side of \eqref{eq:minim} obviously vanishes for any optimal $\tilde{n}$-dimensional subspace $V_{\tilde{n}}$. Finally, observe that one could as well have used any other Sobolev space $H^s(\mathbb{S}_+)$, $s\in \R$, in the above analysis, which would have led to $s$-dependent optimal representations.
\end{remark}
  
\subsection{Interpretation as principal components}
\label{sec:PC}
In this section, we continue to assume that $\rho_1, \dots, \rho_n \in H^1(\mathbb{S}_+)$ are linearly independent and consider the limiting case $\tilde{n}=n$.
We interpret $\alpha \in \R^n$ appearing in the parametrization \eqref{eq:theparam}, with $\tilde{n}=n$, as a random variable and the projection vectors
\begin{equation}
  \label{eq:alpha_sample}
\alpha^{(j)} := \big[(\rho_j, \hat{\rho}_1)_{H^1(\mathbb{S}_+)}, \dots, (\rho_j, \hat{\rho}_n)_{H^1(\mathbb{S}_+)}\big]^{\rm T}, \qquad j=1, \dots, n,
\end{equation}
onto the optimal basis $\hat{\rho}_1, \dots, \hat{\rho}_n$ as its realizations over our library of heads. This is a natural interpretation because substituting $\alpha = \alpha^{(j)}$ in \eqref{eq:theparam} leads to an exact representation for the $j$th head shape, i.e., for the $j$th realization of the human head in our sample (cf.~\eqref{eq:jth_head} and Remark~\ref{remark:remark}).

It follows trivially from \eqref{eq:mean_etc} that the sample mean of $\{ \alpha^{(1)}, \dots, \alpha^{(1)} \} \subset \R^n$ is zero. Moreover, the associated sample covariance matrix $\Gamma_\alpha\in \R^{n \times n}$ can be given componentwise as
\begin{equation}
  \label{eq:Gamma_alpha}
(\Gamma_\alpha)_{kl} = \frac{1}{n-1} \sum_{j=1}^n(\rho_j, \hat{\rho}_k)_{H^1(\mathbb{S}_+)} (\rho_j, \hat{\rho}_l)_{H^1(\mathbb{S}_+)} =
\frac{1}{n-1} \, w_k^{\rm T} R^2 w_l = \frac{\lambda_k}{n-1} \, \delta_{kl},
\end{equation}
where we used the same logic as in \eqref{eq:rayleigh} together with the orthogonality of the scaled eigenvectors of $R$ defined by \eqref{eq:sceig}. In light of the available sample, the random variables $\alpha_1, \dots, \alpha_n$ are thus uncorrelated and the corresponding variances form a non-increasing sequence as do the eigenvalues of $R$. Since $\alpha_1, \dots, \alpha_n$ appear as the multipliers of the basis functions $\hat{\rho}_1, \dots, \hat{\rho}_n$ in the parametrization \eqref{eq:theparam} that is exact for the $j$th head if $\alpha$ takes its $j$th realization, the functions $\hat{\rho}_1, \dots, \hat{\rho}_n$ can be interpreted as the {\em principal components} of the sample $\{ \rho_1, \dots, \rho_n \}$; see,~e.g.,~\cite{Jolliffe02} for more information on principal component analysis.

In the numerical experiments of Section~\ref{sec:numer}, the shape coefficient vector $\alpha \in \R^n$ is assumed to be {\em a priori} distributed as $\alpha \sim \mathcal{N}(0, \Gamma_\alpha)$ with the diagonal covariance matrix $\Gamma_\alpha \in \R^{n \times n}$ defined by \eqref{eq:Gamma_alpha}. To visually examine the validity of such a hypothesis, Figure~\ref{fig:histo} depicts the histograms of the first six components of $\alpha \in \R^n$ over the sample \eqref{eq:alpha_sample}. Due to the relatively small sample size, Figure~\ref{fig:histo} provides no conclusive evidence for the normality assumption, but it cannot be excluded either.

\begin{figure}
  \center{
    \includegraphics[width=7cm]{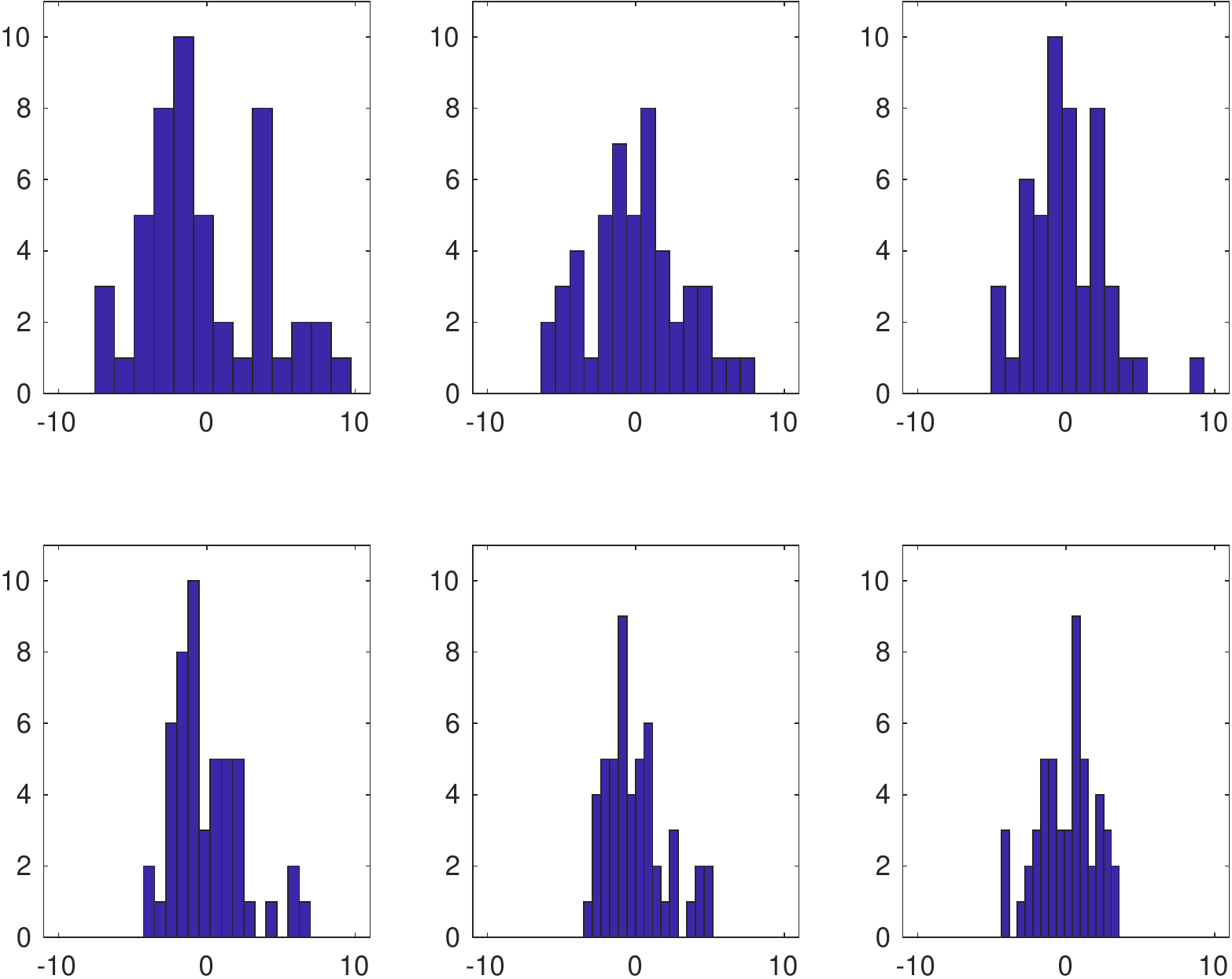}
    }
  \caption{Histograms of the samples $\{ \alpha_k^{(1)}, \dots, \alpha_k^{(n)} \}$ for $k=1, \dots, 6$. Top row: from left to right $k=1,2,3$. Bottom row: from left to right $k=4,5,6$.}
\label{fig:histo}
\end{figure}

\subsection{Building a FEM mesh with electrodes}
\label{sec:FEMmesh}
We start by introducing an exact definition for a set of circular electrodes $E_1, \dots, E_M \subset \partial \Omega$ with preassigned center points $x_1, \dots, x_M \in \partial \Omega$ and radii $R_1, \dots, R_M$. To this end, assume that the portion of $\partial \Omega$ to which the electrodes are attached is of class $\mathcal{C}^1$.
Let $Q_x: \R^3 \to \R^2$ be an orthogonal projection
onto the tangent plane of $\partial \Omega$ at $x \in \partial \Omega$, with the tangent plane identified with $\R^2$ via choosing on it a local orthonormal coordinate system for which $x$ acts as the origin.\footnote{It is easy to check that the nonuniqueness in the choice of such a coordinate system does not affect the definition of a set of circular electrodes by \eqref{eq:elec}.}  Let $B_R(x) \subset \R^3$ denote an open ball of radius $R>0$ around $x$. Using a compactness argument, it can be shown that the restriction $Q_x: B_R(x)\cap \partial \Omega \to \R^{2}$ is injective for all $x \in \partial \Omega$ if $R \leq R_0$ for some $R_0 = R_0(\Omega) > 0$; see~\cite{Darde12} for closely related results.
We then define
\begin{equation}
  \label{eq:elec}
E_m := \big\{ x \in \partial \Omega \cap  B_{R_0}(x_m) \ | \ \| Q_{x_m} x \|_2 < R_m \big \}, \qquad m=1,\dots, M,
\end{equation}
which results in electrodes that have circular projections onto the tangent planes at their respective centers if $R_1, \dots, R_M$ are small enough compared to $R_0$. In other words, the electrodes are modeled as elastic: although their projections are disks, the electrodes themselves elastically take the shape of the boundary they are attached to. This can be considered a reasonable approximation as long as the electrodes are small compared to the characteristic variations in the shape $\partial \Omega$.

Our workflow for generating a tetrahedral mesh for the head model consists of three steps: generation of an initial surface mesh, insertion of electrodes, and tetrahedral mesh generation. The initial surface mesh is constructed by subdividing $k$ times a coarse surface partition consisting  of four triangles, where $k \in \N$ can be chosen by the operator of the algorithm. An electrode $E_m$, $m=1, \dots, M$, is inserted onto the surface mesh by finding the triangles that intersect  the corresponding `extended electrode' 
$$
\hat{E}_m := \big \{ x \in \partial \Omega \ | \ \| Q_{x_m} x \|_2 < \mu R_m  \big \}
$$ 
with a suitably chosen $\mu > 1$. The union of these (closed) triangles is denoted by $S_m$. The nodes of the surface mesh on the boundary $\partial S_m$ are then projected onto the tangent plane at $x_m$ by $Q_{x_m}$. A (dense) mesh $\mathcal{T}_m$ for the resulting polygonal domain is generated using the Triangle software \cite{Shewchuk96}. To avoid hanging nodes, no new vertices are introduced on the boundary of the polygon at this stage. Subsequently, the surface mesh is modified by replacing the original triangles on $S_m$ with the (denser) triangulation $Q_{x_m}^{-1} \mathcal{T}_m$. After inserting all $M$ electrodes, the process is completed by generating a tetrahedral partition for the whole volume by TetGen \cite{Hang15} starting from the formed surface mesh.

\section{Computation of derivatives}
\label{sec:deriv}

The electrode potentials $U \in \R_\diamond^M$ corresponding to a given electrode current pattern $I \in \R_\diamond^M$ can be interpreted as a function of five variables:
\begin{equation}
  \label{eq:Uparam}
U: \, (\sigma, \zeta, \alpha, \beta; I) \mapsto U(\sigma, \zeta, \alpha, \beta; I),
\end{equation}
where $\sigma \in L^\infty_+(\Omega)$ is the conductivity, $\zeta: \partial \Omega \to \C$ is the contact admittance satisfying \eqref{eq:zeta}, $\alpha \in \R^{\tilde{n}}$ is the parameter vector determining the shape of the computational head model in \eqref{eq:theparam}, and $\beta = (\beta_1, \dots, \beta_M) \in [\mathbb{S}_+]^M$ define the directions of the electrode centers from the origin of the parametrization \eqref{eq:theparam}. The aim of this section is to summarize how the partial derivatives of $U$ can be efficiently computed. To maintain readability, we usually suppress the dependence of $U$ on the variables that are not in the focus of our attention at a particular point in the text.

\subsection{Derivative with respect to $\sigma$}

It is well known that the electrode potential pattern $U$ is Fr\'echet differentiable with respect to $\sigma$; see,~e.g.,~\cite{Garde17,Lechleiter06}. To be more precise, the Fr\'echet derivative of $U$ at $\sigma \in L^\infty_+(\Omega)$ in the direction $\eta \in L^\infty(\Omega)$, i.e.~$D_\sigma U(\sigma; \eta) \in \C_\diamond^M$, can be assembled using the formula (see, e.g., \cite{Hyvonen18})
\begin{equation}
\label{eq:sderiv}
D_\sigma U(\sigma; \eta) \cdot \tilde{I} = - \int_{\Omega} \eta \nabla u \cdot \nabla \tilde{u} \, {\rm d} x,
\end{equation}
where $(u, U) \in \mathcal{H}^1$ is the solution to \eqref{eq:cemeqs} for the applied current pattern $I \in \C_\diamond^M$ and $(\tilde{u}, \tilde{U}) \in \mathcal{H}^1$ is the forward solution for an auxiliary current pattern $\tilde{I}$. We frequently employ similar `tilde notation' in what follows.

Observe that knowing the projection $D_\sigma U(\sigma; \eta) \cdot \tilde{I}$ for all $\tilde{I}$ in a basis for $\C_\diamond^M$ uniquely determines $D_\sigma U(\sigma; \eta) \in \C_\diamond^M$, and since $M$ is typically well below hundred, this poses no computational challenges. On the other hand, if $M-1$ linearly independent current patterns are injected through the electrodes when taking the EIT measurements (as is the typical procedure), one must in any case solve the forward problem \eqref{eq:cemeqs} with these same current patterns at each iteration of a Newton-type reconstruction algorithm. In consequence, one need not solve any extra forward problems to be able to perform the necessary evaluations on the right-hand side of \eqref{eq:sderiv}. This means that the extra cost for computing the needed derivatives with respect to the conductivity corresponds to evaluating the integral on the right-hand side of \eqref{eq:sderiv} when $\eta$ runs through the degrees of freedom in the conductivity parametrization and $(u,U)$, $(\tilde{u}, \tilde{U})$ over all pairs of the already computed forward solutions. The same conclusion also applies to the formulas \eqref{eq:zderiv} and \eqref{eq:bderiv} below.

\subsection{Derivative with respect to $\zeta$}

In order to be able to discuss differentiation with respect to $\zeta$, it is convenient to first describe a suitable parametrization for the considered contact admittances. To this end, we introduce a contact admittance (half) shape
$$
\hat{\zeta}: [0,1) \to \R_+.
$$
We then parametrize $\zeta: \partial \Omega \to \C$ as
\begin{equation}
\label{eq:zeta_param}
\zeta(x) =
\left\{
\begin{array}{ll}
  \zeta_m  \, \hat{\zeta} \big(\|Q_{x_m} x \|_2 /R_m \big) \quad & {\rm if} \ x \in E_m, \ \ m=1, \dots, M, \\[1mm]
  0 \quad & {\rm otherwise},
\end{array}
\right.
\end{equation}
where the orthogonal projection $Q_{x_m}: \R^3 \to \R^2$ is as defined in Section~\ref{sec:FEMmesh} and $\zeta_m \in \C$, with ${\rm Re}(\zeta_m) > 0$, defines the relative admittance on the $m$th electrode. Based on the material in \cite{Vilhunen02}, it is straightforward to show that the derivative of $U$ with respect to $\zeta_m$, $m=1, \dots, M$, can be assembled via (cf.~\cite[(26)]{Hyvonen18})
\begin{equation}
  \label{eq:zderiv}
\frac{\partial  U(\zeta)}{\partial \zeta_m} \cdot \tilde{I} = - \int_{E_m} \hat{\zeta} \big(\|Q_{x_m} x \|_2/R_m \big) (U_m - u) (\tilde{U}_m - \tilde{u}) \, {\rm d} S_x,
\end{equation}
where we have used the same notation as in \eqref{eq:sderiv}.

\subsection{Derivative with respect to $\beta$}
To begin with, let us consider a general way of perturbing the shapes and sizes of the electrodes; see \cite{Darde12,Hyvonen17} for more information. Assume the portion of $\partial \Omega$ to which the electrodes are attached is of class $\mathcal{C}^2$. Let $a \in \mathcal{C}^1(E, \R^3)$ and define a perturbed set of electrodes via
\begin{equation}
\label{eq:perturbed}
E_m^a = \big\{ P_x \big(x +a(x) \big) \, \big| \, x \in E_m \big\} \subset \partial \Omega, \qquad m=1,\dots, M,
\end{equation}
where $P_x: \R^3 \supset B_R(x) \to \partial \Omega$, for some $R>0$, is the projection in the direction of $\nu(x)$ onto $\partial \Omega$. The contact admittance $\zeta: \partial \Omega \to \C$ is assumed to stretch accordingly, that is, the admittance on the perturbed electrodes is defined via
\begin{equation}
\label{eq:perturbedz}
\zeta^a\big(P_x (x + a(x) ) \big) = \zeta(x), \qquad x \in E,
\end{equation}
and it vanishes on $\partial \Omega \setminus \overline{E}^a$. It can be shown that there exists a constant $c = c(E,\Omega) >0$ such that the definitions \eqref{eq:perturbed} and \eqref{eq:perturbedz} are unambiguous if $\|a \|_{\mathcal{C}^1(E, \R^3)} < c$; see~\cite{Darde12} for the proof of a closely related result.

The electrode potential pattern $U = U(a) \in \C_\diamond^M$ can obviously be interpreted as a function of (a small enough) $a \in \mathcal{C}^1(E, \R^3)$. Indeed, for a given $a$ one simply considers the perturbed electrodes $E_1^a, \dots, E_M^a$ and contact admittance $\zeta^a$ in \eqref{eq:cemeqs} in place of the original unperturbed ones, which naturally leads to an $a$-dependent forward solution $(u(a), U(a)) \in \mathcal{H}^1$. 

The mapping $\mathcal{C}^1(E, \R^m) \ni a \mapsto U(a) \in \R_\diamond^M$ is Fr\'echet differentiable at the origin~\cite{Darde12,Hyvonen17b}. The associated Fr\'echet derivative in the direction $a \in \mathcal{C}^1(E, \R^m)$ can be assembled using
\begin{equation}
\label{eq:bderiv}
D_a U(0; a) \cdot \tilde{I} = \int_{E} a_\tau \cdot {\rm Grad}(\zeta) \, (U_m - u) (\tilde{U}_m - \tilde{u}) \, {\rm d} S,
\end{equation}
where $a_\tau: E \to \R^3$ is the tangential component of $a \in \mathcal{C}^1(E, \R^3)$ and ${\rm Grad}$ denotes the surface gradient~\cite{Colton98}. For a piecewise constant $\zeta$, as in the traditional CEM, the right-hand side of \eqref{eq:bderiv} reduces to an integral over $\partial E$~\cite{Darde12}.

In order to utilize \eqref{eq:bderiv} in a reconstruction algorithm, we still need to define vector fields that (approximately) correspond to changing the direction of, say, the $m$th electrode, i.e., the parameter $\beta_m \in \mathbb{S}_+$ in \eqref{eq:Uparam}. To this end, we represent $\beta_m = \beta_m(\theta_m, \phi_m)$ as a function of its polar $\theta_m \in (0, \pi/2)$ and azimuthal $\phi_m \in [0, 2\pi)$ angles over the upper hemisphere $\mathbb{S}_+$. We denote by $\hat{\theta}_m \in \R^3$ and $\hat{\phi}_m \in \R^3$ the tangent vectors for $\partial \Omega$ at the center point $x_m$ of $E_m$
obtained by (numerically) differentiating the parametrization \eqref{eq:theparam} with respect to $\theta$ and~$\phi$, respectively.

We extend $\hat{\theta}_m$ as a constant vector field over the whole electrode $E_m$ and define
$$
\tilde{a}^\theta_m(x) := \big( \hat{\theta}_m(x) \big)_\tau, \qquad x \in E_m,
$$
to be its tangential component. (Note that $\tilde{a}^\theta_m$ depends on the location on the electrode as does the tangent plane.)  To ensure that all points on $E_m$ move as much in the tangential direction,~i.e.,~that the size of the electrode is approximately maintained, we finally normalize
$$
a^\theta_m(x) := \frac{\| \tilde{a}^\theta_m(x_m)\|_2}{\| \tilde{a}^\theta_m(x) \|_2} \, \tilde{a}^\theta_m(x), \qquad x \in E_m,
$$
and extend $a^\theta_m$ as zero to the rest of $E$. Although the perturbation of $E_m$ by such a vector field $a^\theta_m$ in the spirit of \eqref{eq:perturbed} does not exactly maintain the circular shape of the elastic electrode $E_m$, it seems feasible that the variations in its shape and size are typically of higher order compared to the moved distance --- especially if the electrodes are small compared to the characteristic variations in the shape of $\partial \Omega$. A perturbation field for $E_m$ in the direction of $\hat{\phi}_m$ is introduced in the same manner.

To summarize, we slightly abuse the notation by identifying $U(\beta) = U(\theta, \phi)$, where $\theta = [\theta_1, \dots, \theta_M]^{\rm T}$ and  $\phi = [\phi_1, \dots, \phi_M]^{\rm T}$, and approximate the corresponding derivatives of $U$ based on the formulas
\begin{equation}
\label{eq:phideriv}
\frac{\partial  U(\theta, \phi)}{\partial \theta_m} \cdot \tilde{I} \approx \int_{E_m} a_m^\theta \cdot {\rm Grad} (\zeta) \, (U_m - u) (\tilde{U}_m - \tilde{u}) \, {\rm d} S
\end{equation}
and
\begin{equation}
\label{eq:thederiv}
\frac{\partial  U(\theta, \phi)}{\partial \phi_m} \cdot \tilde{I} \approx \int_{E_m} a_m^\phi \cdot {\rm Grad}(\zeta) \, (U_m - u) (\tilde{U}_m - \tilde{u}) \, {\rm d} S.
\end{equation}
for $m=1, \dots, M$.

\begin{remark}
\label{remark:theta_phi}
It would definitely be worthwhile to analyze how the vector fields $a_m^\theta: E \to \R^3$ and $a_m^\phi: E \to \R^3$ should be defined so that they would exactly correspond to differentiation with respect to the polar and azimuthal angles for the center point of the $m$th electrode. However, as the above introduced crude approximations seem to function satisfactorily in the reconstruction algorithm introduced in Section~\ref{sec:numer}, we postpone such analysis to future studies. In any case, it should be emphasized that utilizing the formulas \eqref{eq:phideriv} and \eqref{eq:thederiv} is computationally far more attractive than resorting to difference approximations: Using, e.g., central differences to approximate the derivatives on the left-hand sides of \eqref{eq:phideriv} and \eqref{eq:thederiv} would require remeshing and computing forward solutions of \eqref{eq:cemeqs} for $4 M$ extra measurement geometries. In our numerical studies with $M=32$ electrodes, this would correspond to forming $128$ extra FEM meshes and solving the associated forward problems with $M-1 = 31$ different current patterns at each step of the reconstruction algorithm.
\end{remark}
  
\subsection{Derivative with respect to $\alpha$}
In principle, one could try to employ the sampling formulas introduced in \cite{Darde13a,Hyvonen17b} to compute the Fr\'echet derivative of $U = U(\alpha)$ with respect to the shape parameter vector $\alpha \in \R^{\tilde{n}}$ in \eqref{eq:theparam}. However, the analysis in \cite{Darde13a,Hyvonen17b} is based on the assumption that the electrodes stretch accordingly when the shape of $\Omega$ changes. For cylindrical domains, as are the ones considered in \cite{Darde13a,Darde13b}, compensating for such stretching is straightforward because an electrode is essentially determined by its two end points; see~\cite{Darde13a}. However, in our inherently three-dimensional setting, analyzing how the changes in the sizes and shapes of the electrodes affect the shape derivatives becomes more complicated, and we choose to leave such consideration for future studies.

As discussed in Section~\ref{sec:optimpara}, we use $\tilde{n}=5$ in our numerical experiments, and thus approximating the derivatives with respect to such a low number of shape parameters can be done by resorting to,~e.g.,~central differences without increasing the computational load out of proportion. That is, we numerically approximate
\begin{equation}
  \label{eq:adervi}
\frac{\partial  U(\alpha)}{\partial \alpha_j} \approx \frac{U(\alpha+\epsilon \, {\rm e}_j) - U(\alpha-\epsilon\, {\rm e}_j)}{2 \epsilon}, \qquad j=1, \dots, \tilde{n},
\end{equation}
where ${\rm e}_j$ is the $j$th Cartesian basis vector and $\epsilon > 0$ is chosen appropriately.

Observe that \eqref{eq:adervi} is in any case computationally significantly less attractive than \eqref{eq:sderiv}, \eqref{eq:zderiv} and \eqref{eq:bderiv} since it requires extra solutions of the forward problem \eqref{eq:cemeqs} as well as remeshing of the domain. To be more precise, the use of \eqref{eq:adervi} demands solving the forward problem with all employed current patterns for $2\tilde{n}$ extra domain shapes. Hence, it is imperative to keep $\tilde{n}$ as low as possible in the reconstruction algorithm (cf.~Remark~\ref{remark:theta_phi}).

\section{Numerical experiments}
\label{sec:numer}

This section presents our numerical examples. We start by briefly reviewing the reconstruction algorithm that is based on a simple Gauss--Newton iteration; for more information consult,~e.g.,~\cite{Darde13b,Hyvonen17c}. Next, we explain how realistic measurement data are simulated. Finally, some reconstructions are presented with emphasis on analyzing how much ignoring different kinds of geometric inaccuracies in the forward model deteriorate their quality.

Although all theory presented above applies to complex-valued admittivities and contact admittances as well as to the resulting complex-valued potential measurements, we assume from this point on that the frequency of the alternating current fed through the electrodes is so low that the phase information of the measurements can be ignored and the amplitudes of the net currents and electrode potentials can be treated as if they resulted from measurements with direct current; see \cite{Vauhkonen97} for detailed analysis on the validity of such an approximation. In particular, we assume that $\sigma$ and $\zeta$ are real-valued and dub them the {\em conductivity} and the {\em contact conductance}, respectively, in what follows.

\subsection{The algorithm}
We assume to be able to drive $M-1$ linearly independent current patterns $I^{(1)}, \dots, I^{(M-1)} \in \R_\diamond^{M}$ through the electrodes and measure the corresponding noisy electrode potentials denoted by $V^{(1)}, \dots, V^{(M-1)} \in \R^{M}$. The potential measurements are packed into a single vector
\begin{equation}
\label{data}
\mathcal{V} := \big[(V^{(1)})^{\rm T},\ldots, (V^{(M-1)})^{\rm T}\big]^\T\in \R^{M(M-1)},
\end{equation}
and we  analogously  introduce the stacked forward map
$$
\mathcal{U}: \R_+^N \times \R_+^M \times \R^{\tilde{n}} \times (0,\pi/2)^M \times [0,2\pi)^M \to \R^{M(M-1)}
$$
via
$$
\mathcal{U}(\sigma, \zeta, \alpha, \theta, \phi)
= \big[U(\sigma, \zeta, \alpha, \theta, \phi; I^{(1)})^{\rm T},\ldots, U(\sigma, \zeta, \alpha, \theta, \phi; I^{(M-1)})^{\rm T}\big]^\T.
$$
Here, the conductivity $\sigma \in \R_+^N$ is identified with the $N \in \N$ degrees of freedom used to parametrize it, the contact conductance $\zeta \in \R^M$ is identified with the coefficient vector in \eqref{eq:zeta_param}, $\alpha \in \R^{\tilde{n}}$ is the parameter vector in \eqref{eq:theparam} determining the shape of the computational head model, and $\theta \in (0,\pi/2)^M$ and  $\phi \in [0,2\pi)^M$ define the polar and azimuthal  angles of the electrode center points, respectively. As the shape of the computational domain changes during the iteration, the degrees of freedom $\sigma \in \R_+^N$ are chosen to be the coefficients of a piecewise linear parametrization for the conductivity with respect to a FE mesh in a large {\em storage head} that encloses all incarnations of the computational head model encountered by our iterative reconstruction algorithm. The conductivity parametrization is transferred between the current computational head and the storage head via linear projections between the corresponding FE meshes.
 
The reconstruction algorithm aims at computing a minimizer for the Tikhonov functional
\begin{align}
  \label{eq:tikhfun}
F(\sigma, \zeta, \alpha, \theta, \phi)  & = \big\|\mathcal{U}(\sigma, \zeta, \alpha, \theta, \phi) - \mathcal{V}\big\|_{\Gamma^{-1}_{\eta}}^2    + \big\|\sigma - \bar{\sigma} \big \|_{\Gamma^{-1}_\sigma}^2 + \big\|\zeta -\bar{\zeta} \big \|_{\Gamma^{-1}_\zeta}^2 \nonumber \\ & \quad + \big\|\alpha - \bar{\alpha} \big\|_{\Gamma^{-1}_\alpha}^2 + \big\|\theta -\bar{\theta} \big\|_{\Gamma^{-1}_\theta}^2  + \big\|\phi -\bar{\phi} \big\|_{\Gamma^{-1}_\phi}^2,
\end{align}
where the weighted norms are defined by $\|x\|_A^2 := x^\T \! A x$. A minimizer of \eqref{eq:tikhfun} can be interpreted as a {\em maximum a posteriori} (MAP) estimate for the to-be-reconstructed parameters, assuming the measurements are corrupted by additive Gaussian noise with zero mean and covariance $\Gamma_\eta \in \R^{M(M-1) \times M(M-1)}$ and the prior distributions for  $\sigma, \zeta, \alpha, \theta$ and $\phi$ are also Gaussian with the mean-covariance pairs $(\bar{\sigma}, \Gamma_\sigma)$, $(\bar{\zeta}, \Gamma_\zeta)$, $(\bar{\alpha}, \Gamma_\alpha)$, $(\bar{\theta}, \Gamma_\theta)$ and $(\bar{\phi}, \Gamma_\phi)$, respectively. The Cholesky factors of the above introduced positive definite covariance matrices are denoted by $L_*$, where $*$ stands for the variable in question. Moreover,
$$
J_{\mathcal{U}}: \R_+^N \times \R_+^M \times \R^{\tilde{n}} \times (0,\pi/2)^M \times [0,2\pi)^M \to \R^{M(M-1) \times (N+3M+\tilde{n})} 
$$
denotes the Jacobian matrix of $\mathcal{U}(\sigma, \zeta, \alpha, \theta, \phi)$ that can be numerically approximated by following the guidelines in Section~\ref{sec:deriv}.


\begin{algorithm} \label{alg:1}
Assume the covariance matrices $\Gamma_{\eta}, \Gamma_\sigma, \Gamma_\zeta, \Gamma_\alpha$, $\Gamma_\theta$, $\Gamma_\phi$ and the expected values $\bar{\alpha}$, $\bar{\theta}$, $\bar{\phi}$  are given. Compute Cholesky factorizations for the positive definite covariance matrices and form a block-diagonal matrix
$$
L = \diag (L_\sigma, L_\zeta, L_\alpha, L_\theta, L_\phi).
$$
Choose the prior means for the conductivity and contact conductances to be the homogeneous estimates $\bar{\sigma} = \tau_{\sigma}\mathbf{1}$ and $\bar{\zeta} = \tau_{\zeta}\mathbf{1}$ where
$$
(\tau_{\sigma}, \tau_{\zeta}) = \underset{(\tau_\sigma, \tau_\zeta) \in \R_+^2}{\arg \min} \,\big\| L_{\eta} \big(\mathcal{U} (\tau_\sigma \mathbf{1}, \tau_\zeta \mathbf{1}, \bar{\alpha}, \bar{\theta}, \bar{\phi})- \mathcal{V} \big) \big\|_2^2,
$$
and ${\bf 1} = [1, \ldots, 1]^\T$ is at each occurrence a constant vector of the appropriate length.

Initialize
$$
b^{(0)} = \big[\bar{\sigma}^\T, \bar{\zeta}^\T, \bar{\alpha}^\T, \bar{\theta}^\T, \bar{\phi}^\T \big]^\T
$$
and iterate as follows starting from $j=0$:
\begin{enumerate}
\item Form
\begin{align*}
A = \begin{bmatrix}
L_{\eta} J_{\mathcal{U}}(b^{(j)}) \\ L
\end{bmatrix}
\qquad \text{and} \qquad
y = \begin{bmatrix}
L_{\eta} \big(\mathcal{U}(b^{(j)})-\mathcal{V}\big) \\ L (b^{(j)} - b^{(0)})
\end{bmatrix}.
\end{align*}
\item Solve the direction $\Delta b$ from 
$$
\Delta b = \underset{z}{\arg\min} \, \|A z - y\|_2^2.
$$
\item Set $\tilde{b}^{(j+1)} = b^{(j)} - q \Delta b$ where $0 < q \leq 1$ is fixed by the operator of the algorithm.
\item
\begin{enumerate}
  \item  If $F(\tilde{b}^{(j+1)}) < F(b^{(j)})$, set $b^{(j+1)} = \tilde{b}^{(j+1)}$, update $j=j+1$ and return to step 1.
  \item If $F(\tilde{b}^{(j+1)}) \geq F(b^{(j)})$, perform a brief line search on $[b^{(j)}, \tilde{b}^{(j+1)}]$. If decrease in the value of $F$ is observed, dub the corresponding minimizer $b^{(j+1)}$ and return to step 1; otherwise, terminate the iteration.
    \end{enumerate}
\end{enumerate}
Define $[\sigma_*^\T, \zeta_*^\T, \alpha_*^\T, \theta_*^\T, \phi_*^\T]^\T := b^{(j)}$ to be the reconstruction.
\end{algorithm}

We choose $q=0.5$ in all numerical examples documented in Section~\ref{sec:reco}, which seems to provide a suitable trade-off between robustness and speed of convergence for the considered setups.

\subsection{Simulation of measurement data}
\label{sec:data}

We consider a couple of different target heads. In each case, the parameters $\alpha_{\rm trgt} \in \R^{n_0}$ defining the shape of the target are drawn from (a variant of) the distribution $\mathcal{N}(0, \Gamma_\alpha)$, where, allowing a slight abuse of notation, $\Gamma_\alpha \in \R^{n_0 \times n_0}$ is the diagonal covariance matrix defined componentwise by \eqref{eq:Gamma_alpha}; for a motivation of this choice, see the principal component construction in Section~\ref{sec:PC}. The number of components in $\alpha_{\rm trgt}$ is chosen considerably higher than the number of shape parameters reconstructed by Algorithm~\ref{alg:1},~i.e.,~$n_0=10$ vs.~$\tilde{n}=5$. In other words, the heads used for simulating data are more detailed than the reconstruction algorithm is capable to accurately represent.

The expected values for the polar and azimuthal angles of the electrode centers, i.e.~$\bar{\theta}$ and $\bar{\phi}$, are chosen to correspond to the angular positions to which one originally aimed to attach the electrodes. The actual central angles of the target electrodes, i.e.~$\theta_{\rm trgt}$ and $\phi_{\rm trgt}$, are then drawn from the distributions $\mathcal{N}(\bar{\theta}, \Gamma_\theta)$ and $\mathcal{N}(\bar{\phi}, \Gamma_\phi)$, where
\begin{equation}
\label{eq:Gamma_thph}
\Gamma_\theta = \varsigma_\theta^2 \mathbb{I} \qquad {\rm and} \qquad
\Gamma_\phi = \varsigma_\phi^2 \mathbb{I}. 
\end{equation}
Here, $\mathbb{I} \in \R^{M \times M}$ is the identity matrix and $\varsigma_\theta,\varsigma_\phi>0$ determine the standard deviations in the two angular directions. Notice that $\varsigma_\theta$ and $\varsigma_\phi$ must be chosen so small that the electrodes are not at a risk to overlap or move outside the crown of the computational head. The radii of all electrodes are chosen to be the same, say, $R>0$, which is the most common setup encountered in practice.

The relative contact conductances $(\zeta_{\rm trgt})_m \in \R_+$, $m=1, \dots, M$,  appearing in \eqref{eq:zeta_param} are independently drawn from $\mathcal{N}(\tilde{\tau}_\zeta, \tilde{\varsigma}_\zeta^2)$, where $\tilde{\tau}_\zeta > 0$ is chosen so much larger than $\tilde{\varsigma}_\zeta>0$ that negative contact conductances are practically impossible. The corresponding conductance shape is chosen to be $\hat{\zeta}_{\rm trgt} \equiv 1$ in \eqref{eq:zeta_param}, that is, we employ the traditional CEM with constant contact conductances when simulating data. Finally, $\sigma_{\rm trgt} \in \R_+^{N_0}$, $N_0 \in \N$, corresponds to a piecewise linear parametrization for the studied target conductivity on a dense FE mesh associated to the target head and the electrode positions defined by $\alpha_{\rm trgt}$, $\theta_{\rm trgt}$ and $\phi_{\rm trgt}$.


We approximate the ideal data $\mathcal{U}(\sigma_{\rm trgt}, \zeta_{\rm trgt}, \alpha_{\rm trgt}, \theta_{\rm trgt}, \phi_{\rm trgt})$ by FEM with piecewise linear basis functions and denote the resulting (almost) noiseless data by $\mathcal{U}_{\rm trgt} \in \R^{M(M-1)}$. The actual noisy data is then formed as
$$
\mathcal{V} = \mathcal{U}_{\rm trgt} + \eta,
$$
where $\eta \in \R^{M(M-1)}$ is a realization of a zero-mean Gaussian with the diagonal covariance matrix
\begin{equation}
\label{eq:noisecov} 
\Gamma_\eta = \varsigma_\eta^2 \big( \max_{j} \mathcal (\mathcal{U}_{\rm trgt})_j   - \min_{j}  (\mathcal{U}_{\rm trgt})_j \big)^2 \, \mathbb{I},
\end{equation}
with $\mathbb{I} \in \R^{M(M-1) \times M(M-1)}$. The free parameter $\varsigma_\eta >0$ can be tuned to set the relative noise level. Such a noise model has been used with real-world data,~e.g.,~in~\cite{Darde13b}.

\subsection{Reconstructions}
\label{sec:reco}

Let us then focus on the parameter choices for Algorithm~\ref{alg:1}. In \eqref{eq:tikhfun} we employ $\Gamma_\eta \in \R^{M(M-1) \times M(M-1)}$, $\Gamma_\theta \in \R^{M \times M}$ and $\Gamma_\phi \in \R^{M \times M}$ as defined in \eqref{eq:noisecov} and \eqref{eq:Gamma_thph}, respectively. In other words, we assume to know an accurate model for the measurement noise as well as to have a good understanding of the ways the electrodes are typically misplaced. Moreover, it is not far-fetched to assume the intended angular positions for the electrodes, i.e.~$\bar{\theta}$ and $\bar{\phi}$, are also known. The matrix $\Gamma_\alpha \in \R^{\tilde{n} \times \tilde{n}}$, with $\tilde{n} = 5$,  in \eqref{eq:tikhfun} is defined by \eqref{eq:Gamma_alpha} and the corresponding expected value $\bar{\alpha}$ is set to zero; see Section~\ref{sec:PC}.

In the reconstruction algorithm, we choose the form
\begin{equation}
\label{eq:smooth_zeta}   
\hat{\zeta}(t) = \exp\left( 2 - \frac{2}{1 - t^2} \right)
\end{equation}
for the contact shape $\hat{\zeta}: [0,1) \to \R_+$ appearing in \eqref{eq:zeta_param}. This leads to a smooth contact conductance $\zeta: \partial \Omega \to \R_+ \cup \{0\}$ with $\zeta(x_m) = 1$ for each electrode center $x_m$, $m=1, \dots, M$.\footnote{Replacing $2$ by any other positive real number in \eqref{eq:smooth_zeta} would also lead to a smooth contact conductance, but this particular choice seems to function well in our numerical examples.} The `expected value' $\bar{\zeta}$ in \eqref{eq:tikhfun} is selected automatically by Algorithm~\ref{alg:1}. We choose  the corresponding prior covariance to be $\Gamma_\zeta = (2 \tilde{\varsigma}_\zeta \bar{\zeta}/\tilde{\tau}_\zeta )^2 \mathbb{I}$, where $\tilde{\tau}_\zeta$ and  $\tilde{\varsigma}$ are the expected value and standard deviation employed when drawing the target contact conductance values. The reason for this ad hoc choice is two-fold: First of all, as the contact conductance models for the data simulation and the reconstruction process are different, there is no reason to expect that the distributions of the associated parameters are the same; for further analysis, see~\cite{Hyvonen17b}, where it is noted that the smoothened model often leads to significantly higher peak values for the contact conductance function. Secondly, the estimation of the contact conductances is usually a rather stable process which allows one to employ only mild regularization,~i.e.,~twice as large ratio between the prior standard deviation and expected value than used in the data simulation step.

The prior covariance matrix for the conductivity is defined componentwise as 
\begin{align}
  \label{eq:prior}
(\Gamma_\sigma)_{ij} = \varsigma_\sigma^2 \exp \left(-\frac{\|x_i-x_j\|_2^2}{2 l^2} \right), \qquad i,j = 1, \dots, N, 
\end{align}
where $\varsigma_\sigma>0$ is the pointwise standard deviation, $l > 0$ is the correlation length, and $x_i, x_j \subset \R^2$ are coordinates of FE nodes in the storage head. Notice that there is no need to choose the `expected value' $\bar{\sigma}$ in \eqref{eq:tikhfun} as it is automatically selected by Algorithm~\ref{alg:1}. 

The electrode potentials and associated Jacobian matrices needed in Algorithm~\ref{alg:1} were approximated by FEM with piecewise linear basis functions on meshes with about 20\,000 nodes and 85\,000 tetrahedra; see Section~\ref{sec:FEMmesh} and,~e.g.,~the right-hand image of Figure~\ref{fig:target_head1}. The number of degrees of freedom for the conductivity parametrization on the storage head was about $N = 9000$. All computations were performed with a nonoptimized MATLAB implementation on a laptop with 16\,GB RAM and an Intel CPU having clock speed 3.5\,GHz.

\subsubsection{Test 1: one insulating and one conductive inclusion}
The left-hand image of Figure~\ref{fig:target_head1} illustrates the first target head with the misplaced electrodes on its boundary. The azimuthal and polar angles of the electrodes were drawn as explained in Section \ref{sec:data} with $\varsigma_\theta = \varsigma_\phi = 0.03$ radians in \eqref{eq:Gamma_thph}. The right-hand image of Figure~\ref{fig:target_head1} shows the geometric initial guess for Algorithm~\ref{alg:1},~i.e.,~the average head shape together with electrodes at their intended positions organized into three belts with constant polar angles. Counting upwards from the bottom belt, there are altogether $M = 16+10+6 = 32$ electrodes and their common radius is $R=0.75$\,cm. The mean and standard deviation used for drawing the relative contact conductances $\zeta_{\rm trgt}\in \R^M$ were $\tilde{\tau}_\zeta=100\,{\rm S}/{\rm m}^2$ and $\tilde{\varsigma}_\zeta = 10\,{\rm S}/{\rm m}^2$, respectively. Observe that such contacts are comparable to those documented in \cite{Cheng89,Darde13b,Heikkinen02,Hyvonen17}, if the quality of contact is measured by the ratio between the contact conductance and the conductivity level of the imaged body. The relative noise level in \eqref{eq:noisecov} was set to $\varsigma_\eta = 10^{-3}$ (cf.~\cite{Darde13b}), and the pointwise standard deviation and correlation length for the prior in \eqref{eq:prior} were chosen to be $\varsigma_\sigma = 0.1 \, {\rm S}/{\rm m}$ and $l = 3.3 \, {\rm cm}$, respectively. 

\begin{figure}[t]
\center{
  {\includegraphics[width=6cm]{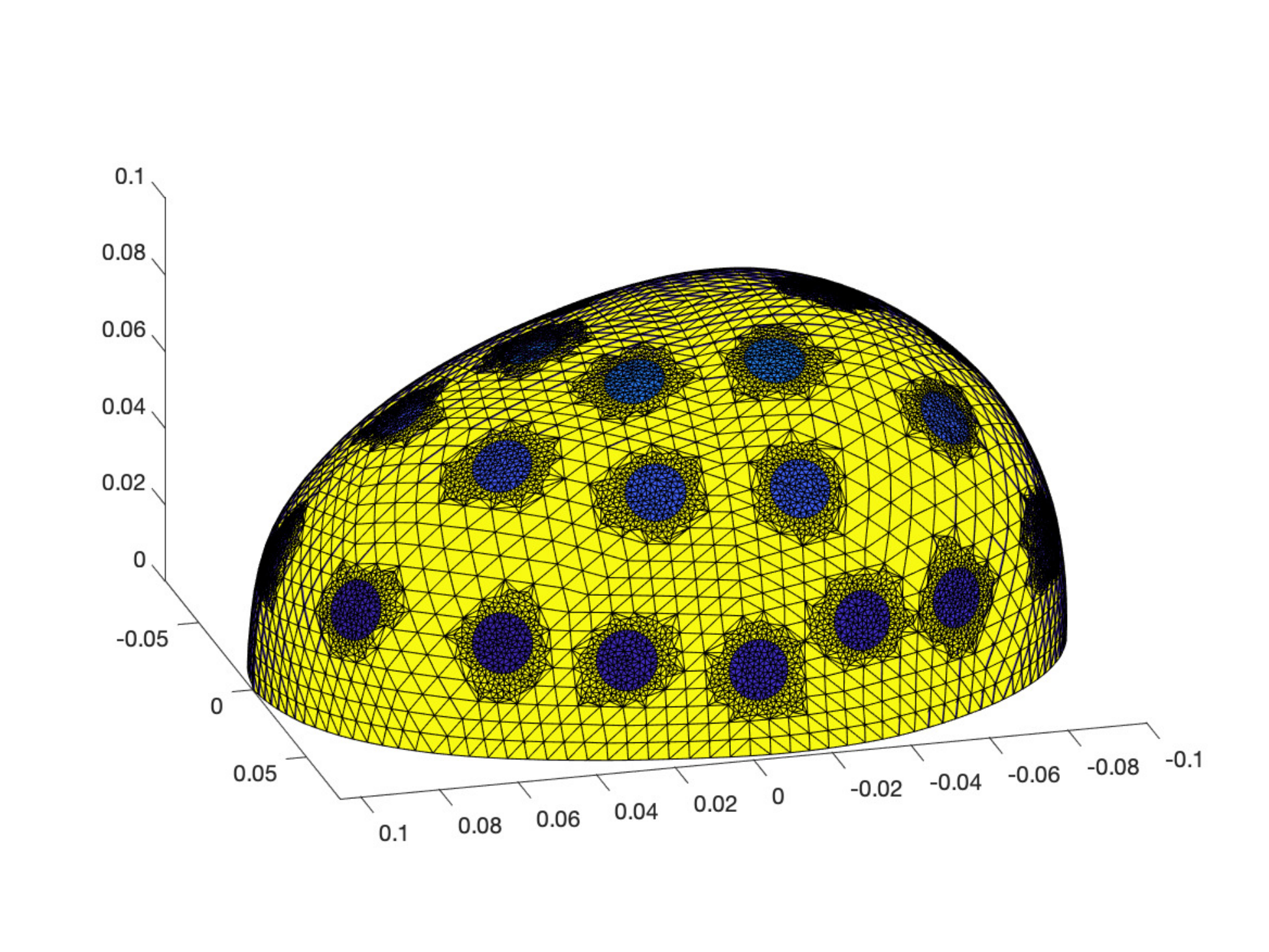}}
{\includegraphics[width=6cm]{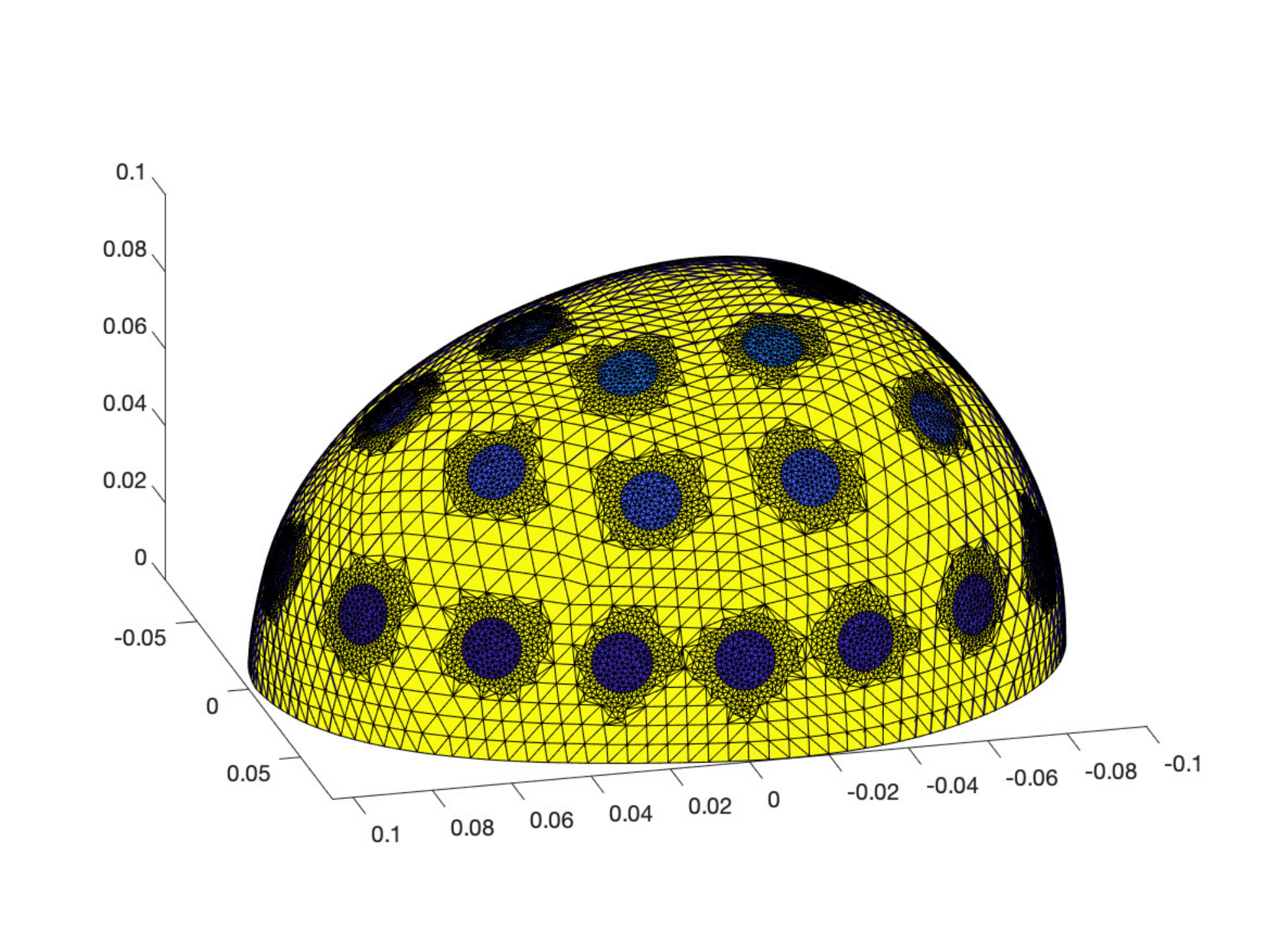}}
}
  \caption{Left: the first target head with the misplaced electrodes. Right: the corresponding initial guess for Algorithm~1 with electrodes at the intended positions. The unit of length is meter.}
  \label{fig:target_head1}
\end{figure}

The target conductivity consists of two balls with constant conductivities $\sigma_1 = 2\,{\rm S}/{\rm m}$ and $\sigma_2 = 0.02\,{\rm S}/{\rm m}$ embedded in a homogeneous background characterized by $\sigma_0 = 0.2 \, {\rm S}/{\rm m}$. Take note that such a background level is in line with conductivities reported for white and gray matter in the human brain \cite{Gabriel96,Latikka01}.
The center and radius of the conductive inclusion are $y_1 = (-2,-2,6)$\,cm and $r_1= 1.5$\,cm, respectively, whereas the insulating inclusion has radius $r_2 = 2$\,cm and it is centered around $y_2 = (3,3,3)$\,cm.

\begin{figure}[t]
  \center{
    {\includegraphics[width=6cm]{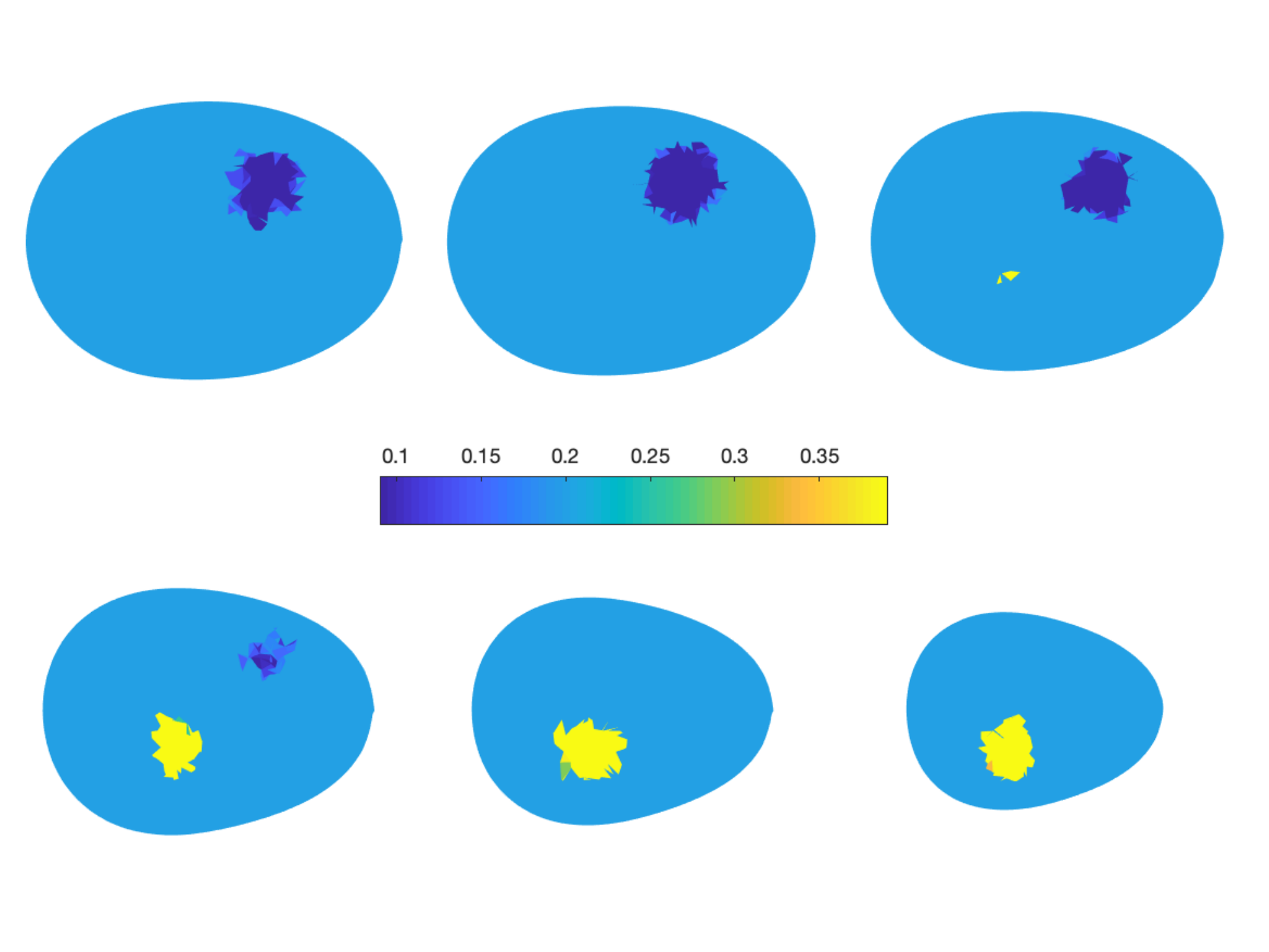}}
    \quad
{\includegraphics[width=6cm]{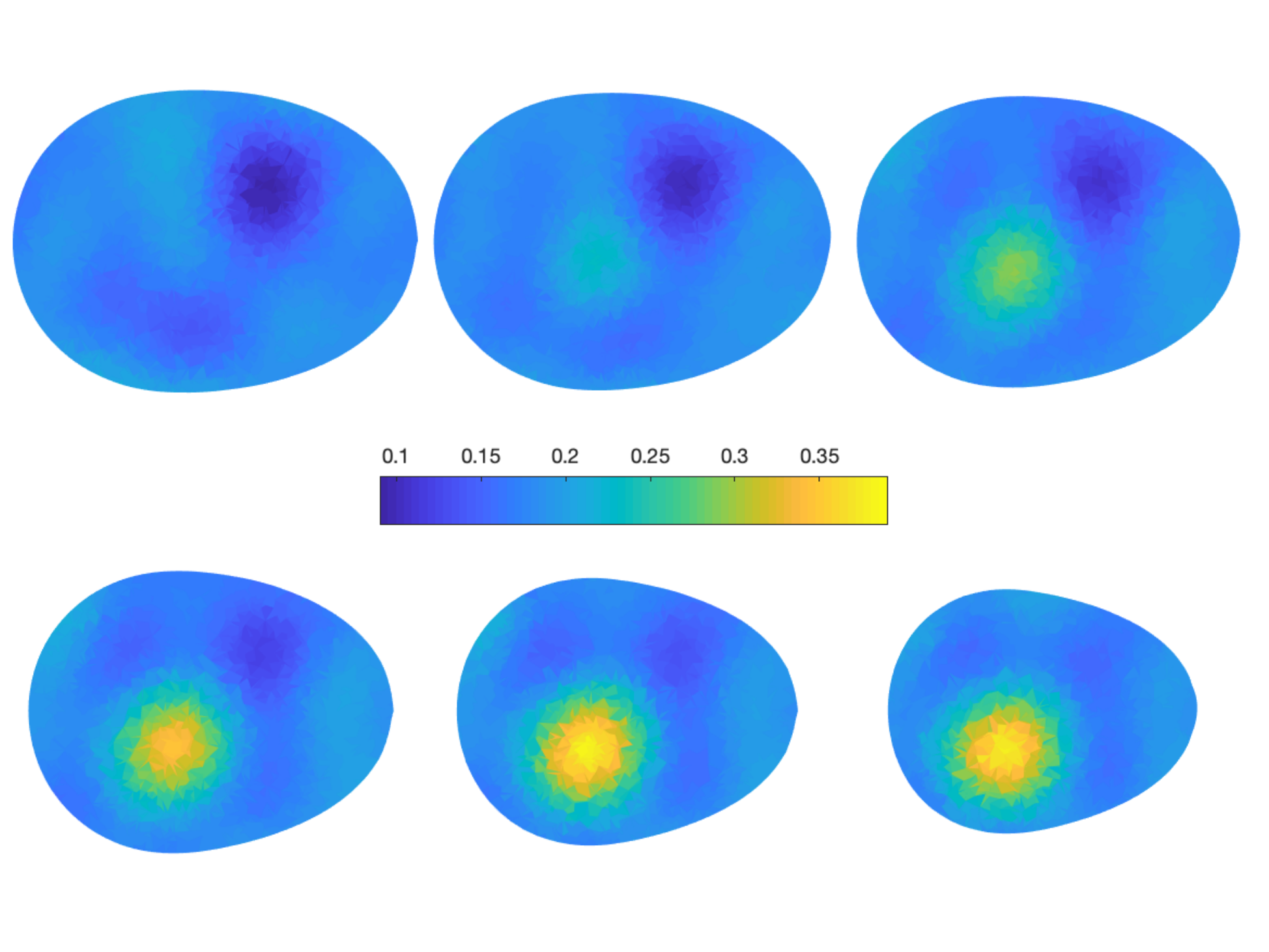}}
  }

\smallskip
  
  \center{
    {\includegraphics[width=6cm]{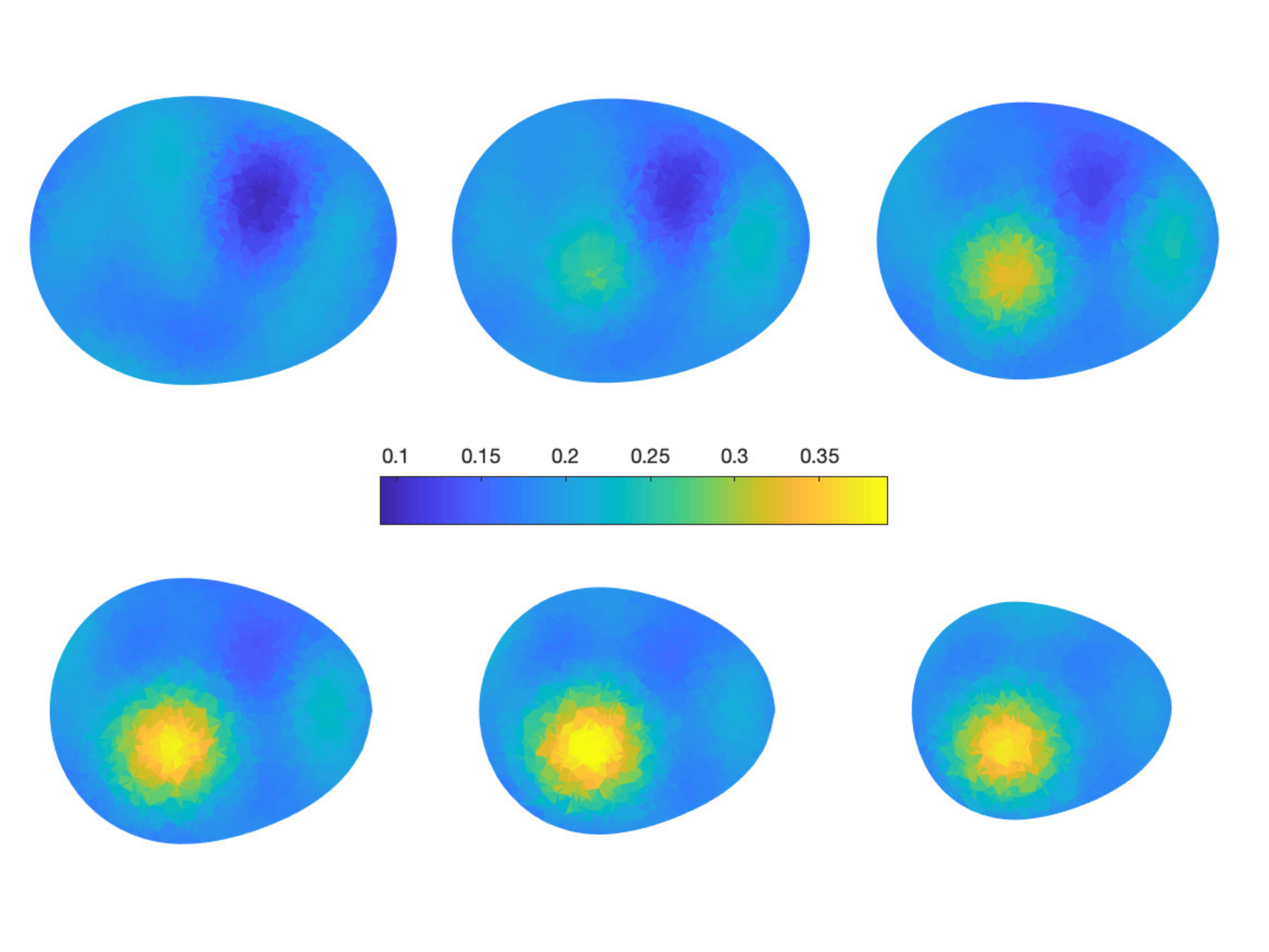}}
    \quad
{\includegraphics[width=6cm]{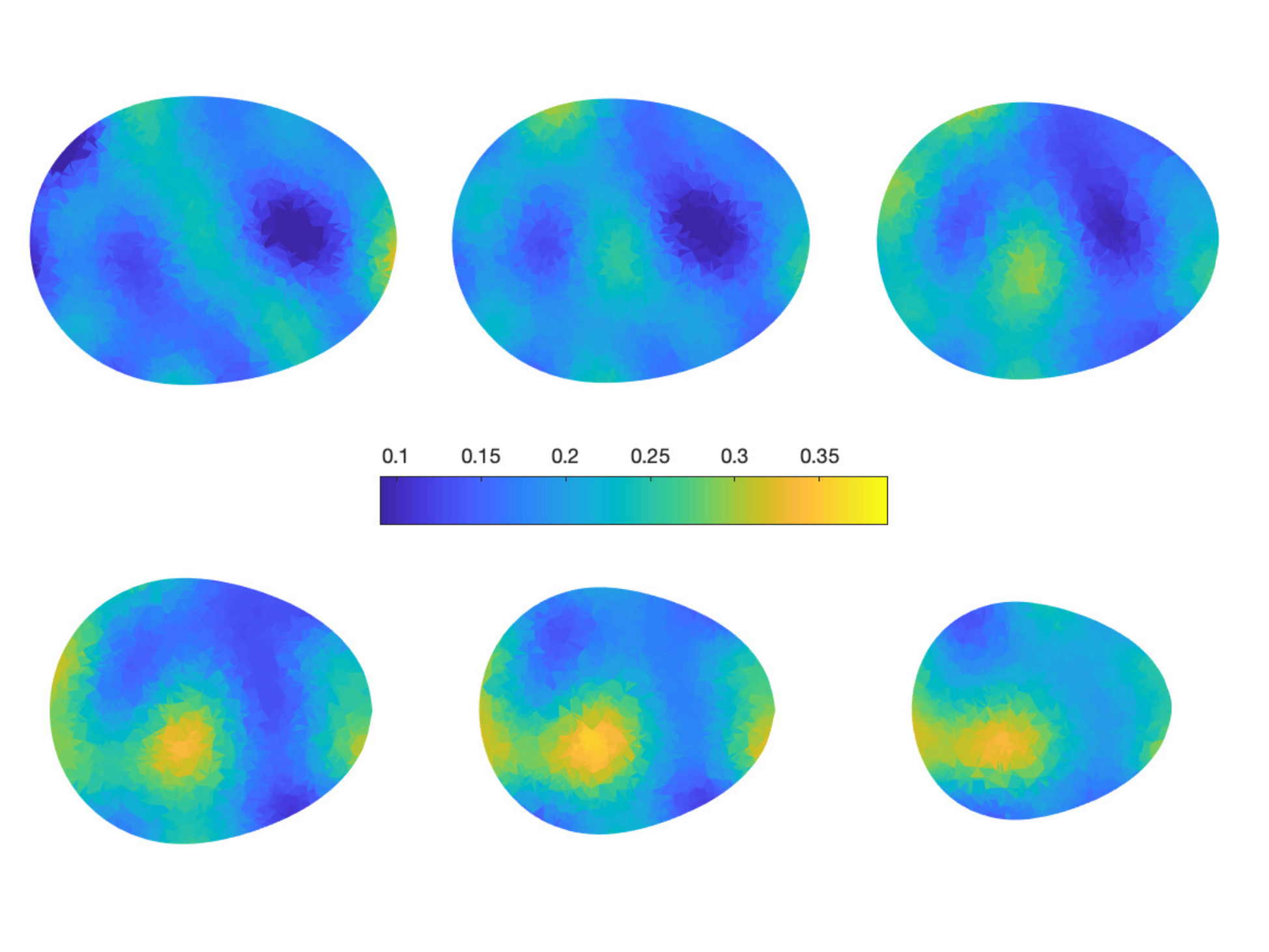}}
  }
  \caption{Six horizontal cross-sections of the target conductivity and of three reconstructions for test~1; the slices are at levels $2$, $3$, $4$, $5$, $6$ and $7$\,cm. Top left: target conductivity (the colormap does not cover the whole dynamic range). Top right: reconstruction by the complete Algorithm~\ref{alg:1}. Bottom left: reconstruction when $\alpha = 0$ is fixed. Bottom right: reconstruction when $\alpha = 0$, $\theta = \bar{\theta}$ and $\phi = \bar{\phi}$ are fixed.}
  \label{fig:reco1_1}
\end{figure}

Figure~\ref{fig:reco1_1} compares the target conductivity with three reconstructions. The top right reconstruction
corresponds to the complete Algorithm~\ref{alg:1}; the Gauss--Newton iteration converged in six rounds and about twenty minutes. The bottom left image
resulted from not taking the mismodeling of the head shape into account, that is, the reconstruction was formed by setting $\alpha = 0$ in Algorithm~\ref{alg:1} and only estimating the other four unknowns. As the computation of the derivatives with respect to the shape parameters is the most time consuming part of Algorithm~\ref{alg:1}, forming this second reconstruction took only a few minutes, although the Gauss--Newton iteration lasted for one extra round. Finally, the bottom right reconstruction corresponds to altogether ignoring geometric inaccuracies and fixing $\alpha = 0$, $\theta = \bar{\theta}$ and $\phi = \bar{\phi}$ in Algorithm~\ref{alg:1}. Computing this third reconstruction in seven Gauss--Newton iterations was somewhat faster than the second one, although the difference was not significant.  

The reconstruction corresponding to the complete Algorithm~\ref{alg:1} in the top right image of Figure~\ref{fig:reco1_1} and the one ignoring (only) the mismodeled head shape in the bottom left image are comparable in quality. One can approximately locate the two inhomogeneities in each of these images, although the reconstructions are significantly blurred and unable to capture the full dynamic range of the target conductivity (due to the employed smoothness prior). The reconstruction corresponding to the complete algorithm carries, arguably, less significant artifacts close to the object boundaries, but it also overestimates the size of the imaged head. This is a recurring phenomenon: quite often the complete algorithm is unable to estimate the overall size of the head, even if it is otherwise able to reproduce the head shape. Our hypothesis is that this phenomenon can be explained by the scaling of the object and the average level of the contact conductances having a similar effect on the measurements. Although the reconstruction ignoring all uncertainties in the measurement setup in the bottom right image of Figure~\ref{fig:reco1_1} can be considered tolerable, it suffers from severe artifacts and is certainly inferior to the other two.

\begin{figure}[t]

  \center{
    {\includegraphics[width=6cm]{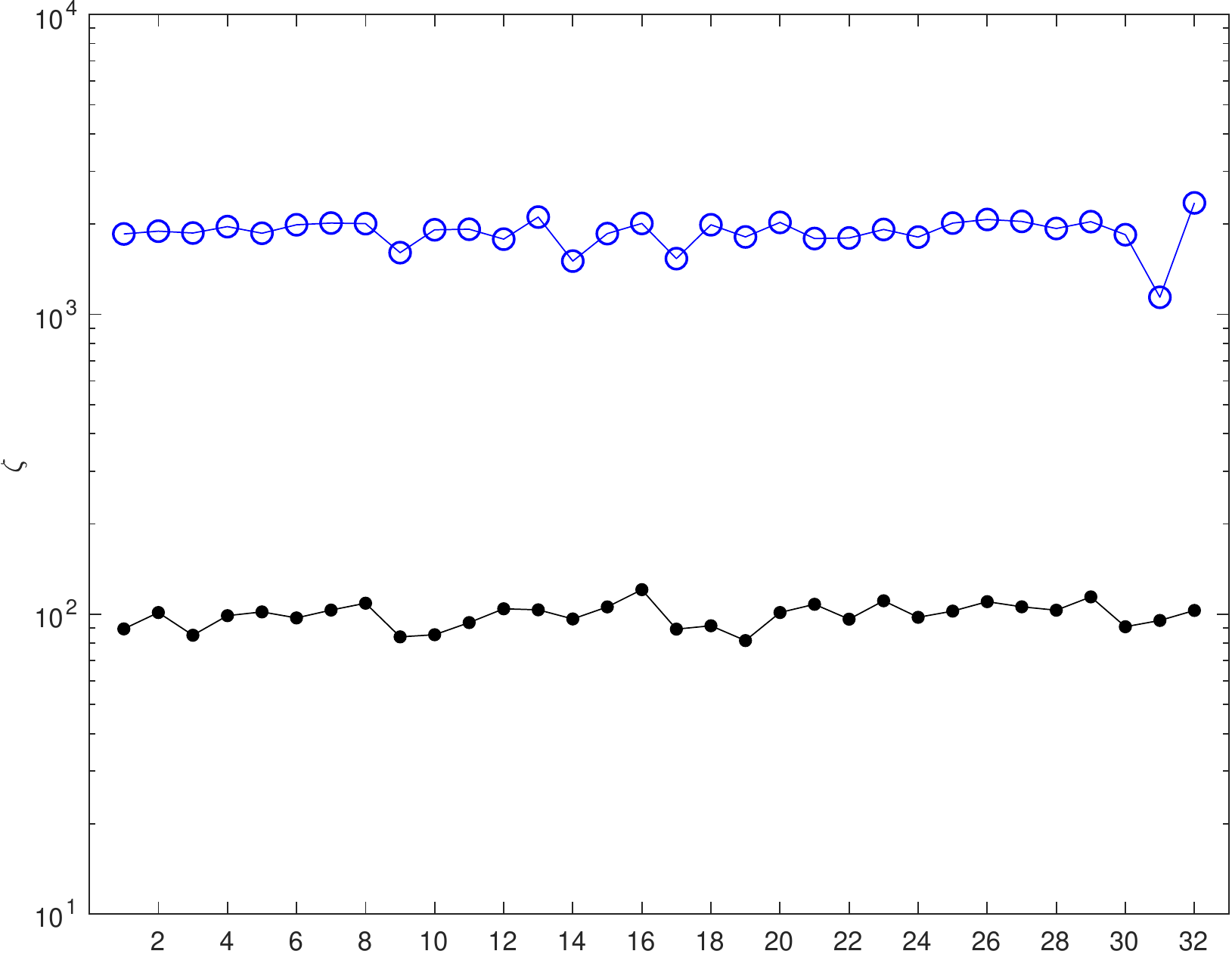}}
    \qquad
{\includegraphics[width=6cm]{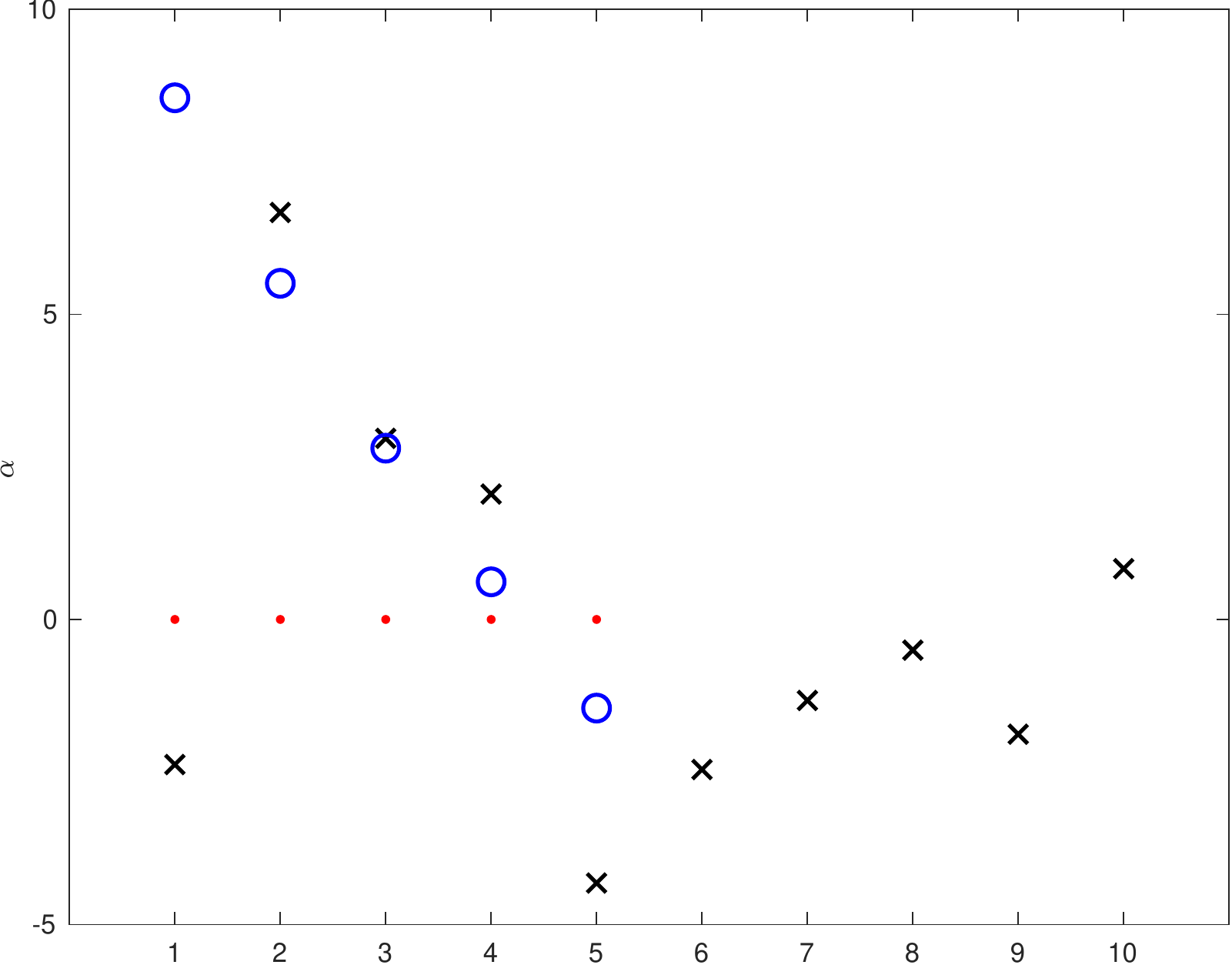}}
  }

  \bigskip
  \bigskip
  
  \center{
    {\includegraphics[width=6cm]{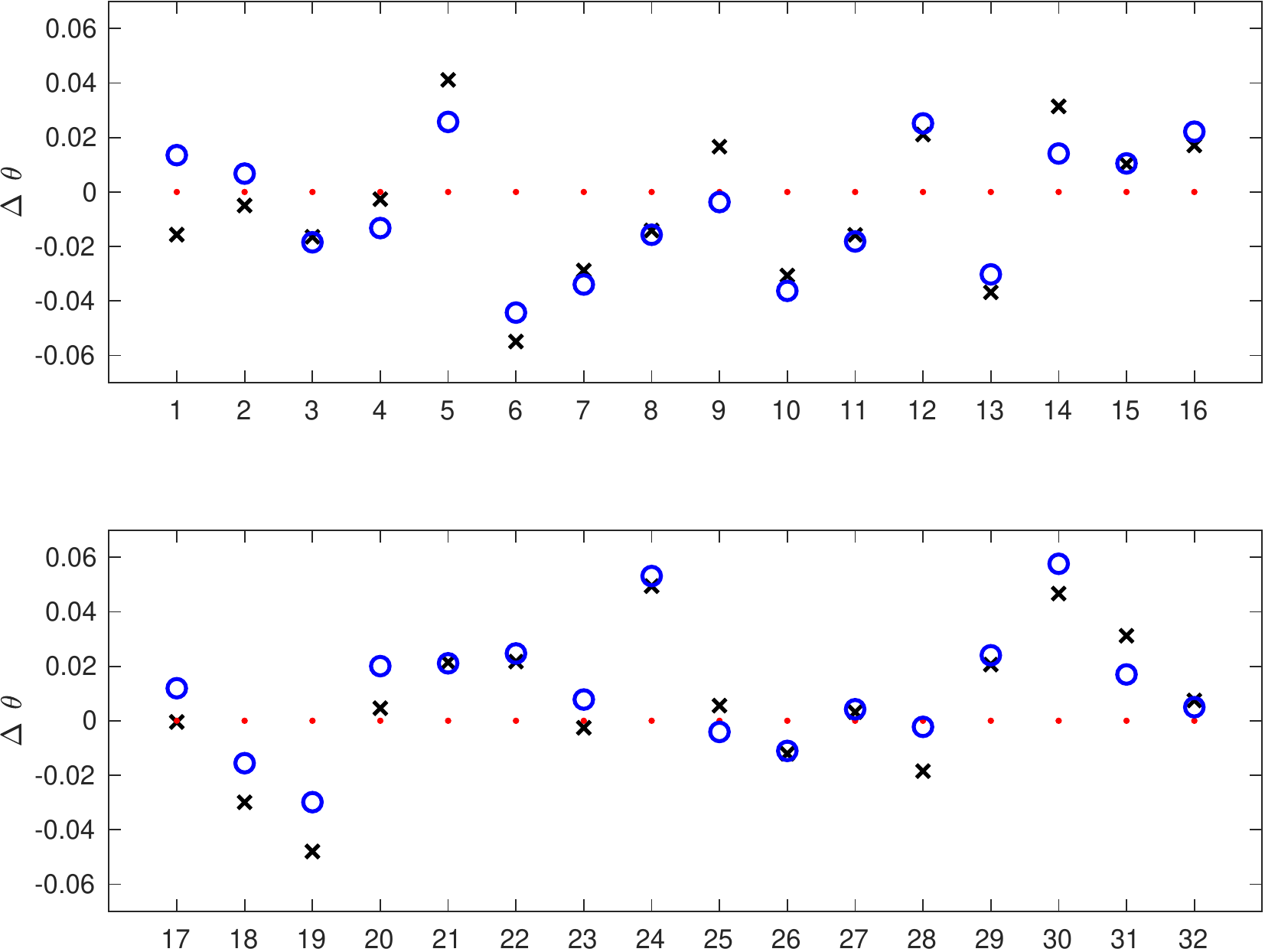}}
    \qquad
{\includegraphics[width=6cm]{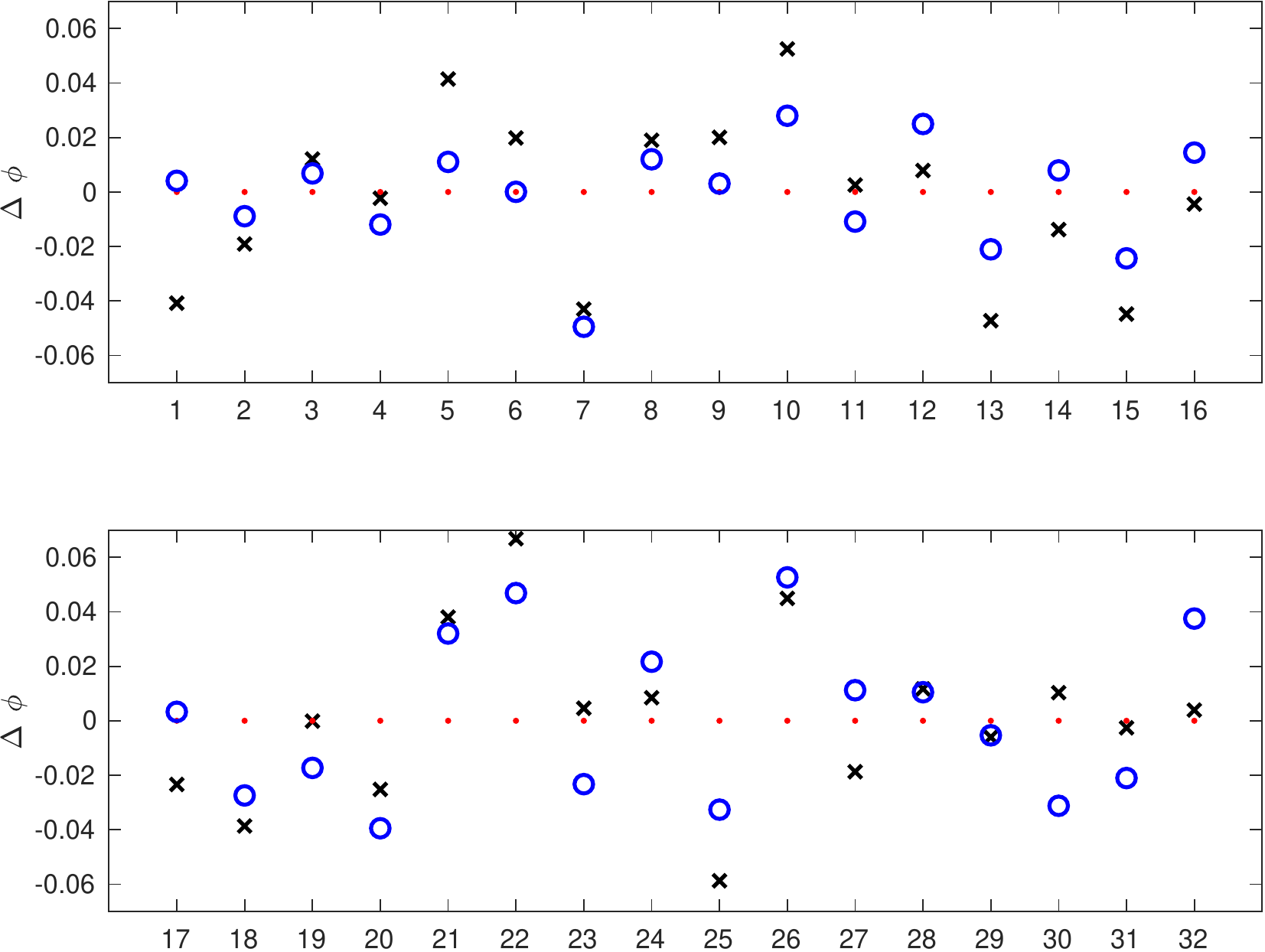}}
  }
  \caption{Target parameters and those reconstructed by the complete Algorithm~\ref{alg:1} for test~1. The electrodes are numbered starting from the frontal electrode on the bottom belt and circulating around the head in the positive direction when viewed from above. Top left: $\zeta_*$ (circles) and $\zeta_{\rm trgt}$ (dots) on a logarithmic scale. Top right: $\alpha_*\in \R^5$ (circles), $\alpha_{\rm trgt}\in \R^{10}$ (crosses) and the expected values/initial guesses (dots).  Bottom left: $\theta_*$ (circles)  and $\theta_{\rm trgt}$ (crosses). Bottom right: $\phi_*$ (circles) and $\phi_{\rm trgt}$ (crosses). The electrode angles are depicted relative to their intended positions.}
  \label{fig:reco1_2}
\end{figure}

Figure~\ref{fig:reco1_2} compares the target parameter values $\alpha_{\rm trgt}$, $\zeta_{\rm trgt}$, $\theta_{\rm trgt}$ and $\phi_{\rm trgt}$ with those reconstructed by the complete Algorithm~\ref{alg:1}. Recall that the algorithm only reconstructs five shape parameters,~i.e.~$\alpha_* \in \R^5$, while the target head shape is described by $\alpha_{\rm trgt} \in \R^{10}$. Moreover, the comparison between the reconstructed $\zeta_*$ and the corresponding target $\zeta_{\rm trgt}$ is not straightforward as they correspond to different contact shapes in \eqref{eq:zeta_param}; see~\cite{Hyvonen17b} for further information on this matter. Apart from $\alpha_1$ that mainly controls the overall size of the object (cf.~Figure~\ref{fig:pricomp}), the other four estimated shape parameters are reasonable, yet by no means completely accurate. Moreover, the algorithm seems to be able to detect general patterns in the polar and azimuthal angles of the electrodes relative to each other, although many of the electrode positions themselves are not reconstructed accurately. The interplay between the effects the electrode positions, the object shape and the contact conductances have on the measurements is altogether nontrivial, which is definitely reflected in the estimated parameter values illustrated in Figure~\ref{fig:reco1_2}. Be that as it may, the main objective of Algorithm~\ref{alg:1} is achieved because the conductivity reconstruction in the top right image of Figure~\ref{fig:reco1_1} can be considered informative.

\subsubsection{Test 2: one inclusion with more uncertain electrode positions}
The second test considers the same intended electrode configuration as the first one, but this time around we increased the uncertainty about the electrode positions and the contact conductances by choosing $\varsigma_\theta = \varsigma_\phi = 0.04$, $\tilde{\tau}_\zeta=100\, {\rm S}/{\rm m}^2$ and $\tilde{\varsigma}_\zeta = 20\, {\rm S}/{\rm m}^2$ when drawing the target values $\theta_{\rm trgt}$, $\phi_{\rm trgt}$ and $\zeta_{\rm trgt}$. The target shape parameters $\alpha_{\rm trgt}$ were redrawn to obtain a new head shape. The to-be-reconstructed conductivity still has a constant background described by $\sigma_0 = 0.2\, {\rm S}/{\rm m}^2$ with an embedded, insulating cylindrical inclusion with constant conductivity level $\sigma = 0.02 \, {\rm S}/{\rm m}$, height $H = 7$\,cm, and radius $r = 1.5$\,cm centered at $(1,3,3)$\,cm. The central axis of the cylinder is oriented orthogonal to the coronal plane. The prior for the conductivity and the noise model are the same as in the first test.

\begin{figure}[t]
  \center{
  {\includegraphics[width=6cm]{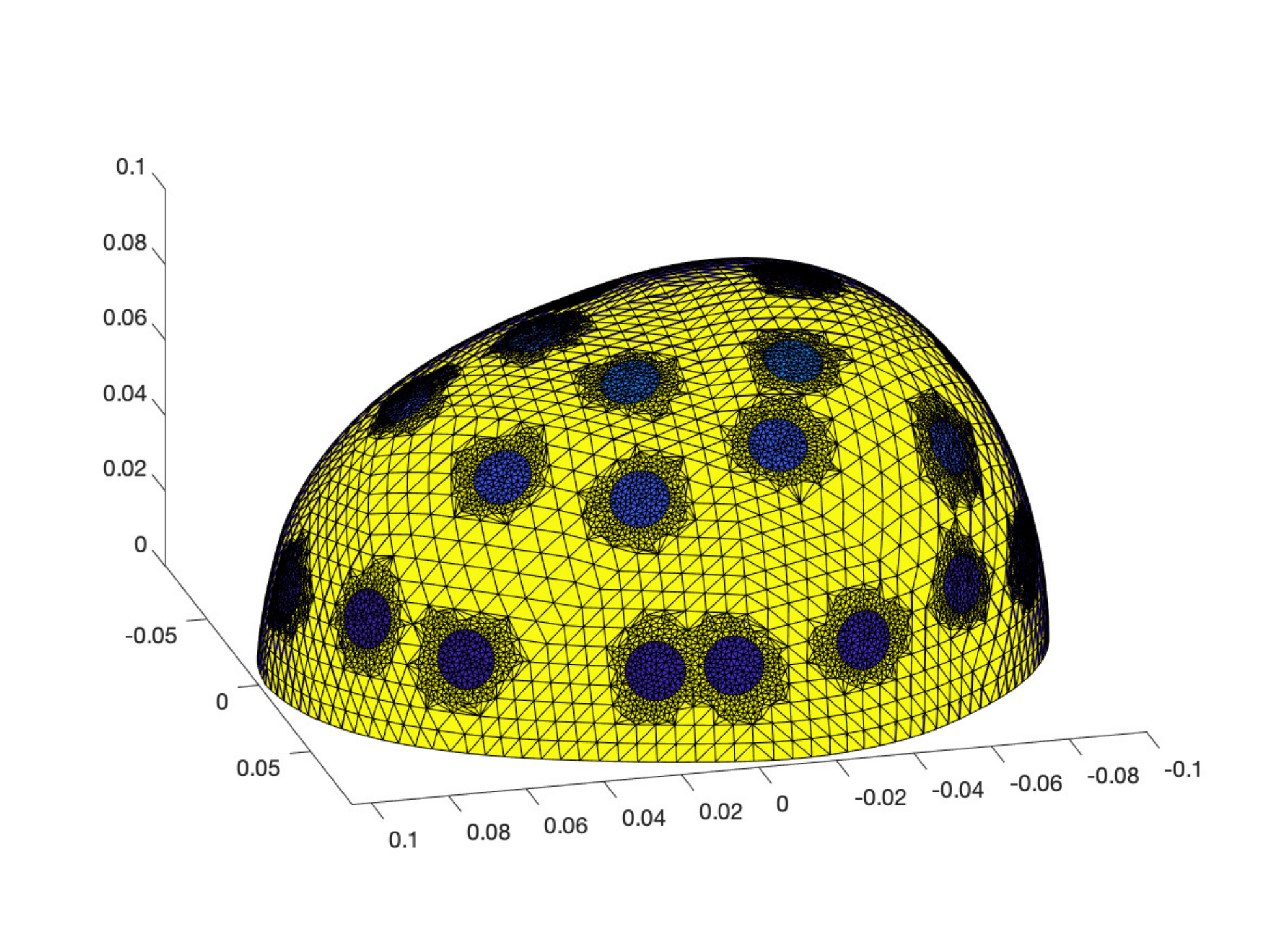}}
{\includegraphics[width=6cm]{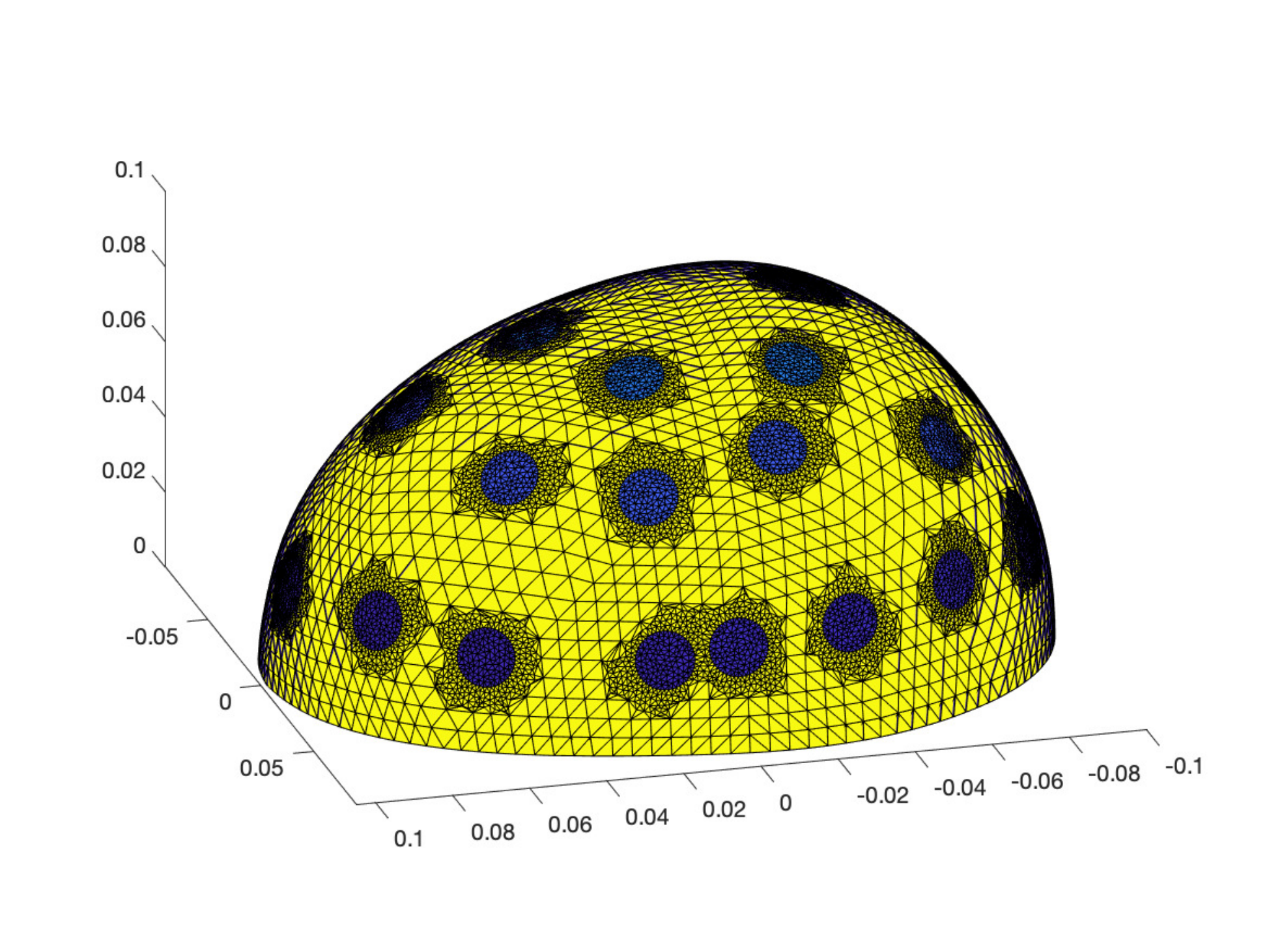}}
}
  \caption{Left: the second target head with the misplaced electrodes. Right: the reconstructed geometry by Algorithm~\ref{alg:1}. The unit of length is meter.}
  \label{fig:reco2_1}
\end{figure}

\begin{figure}[t]

  \center{
    {\includegraphics[width=6cm]{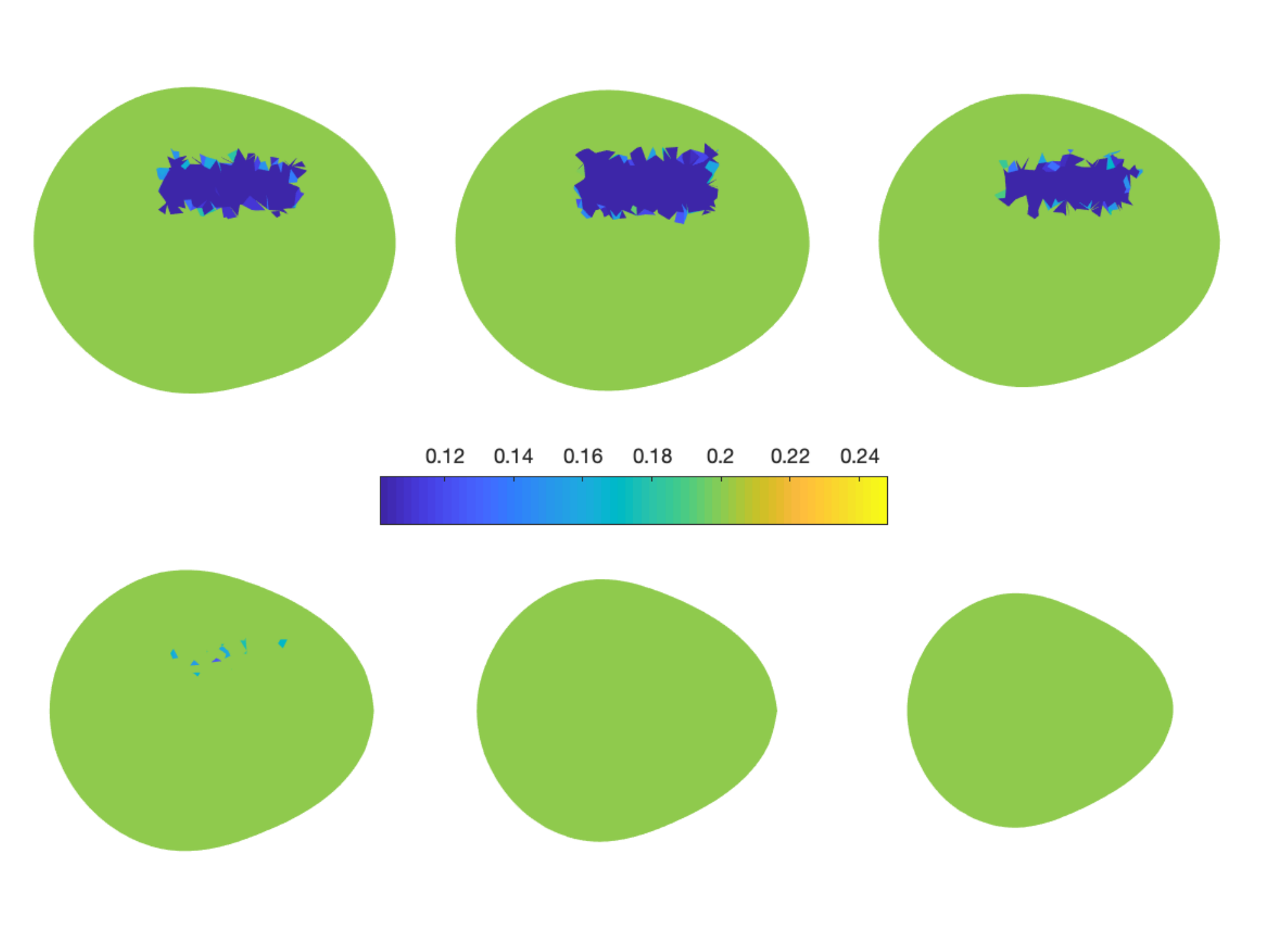}}
    \quad
{\includegraphics[width=6cm]{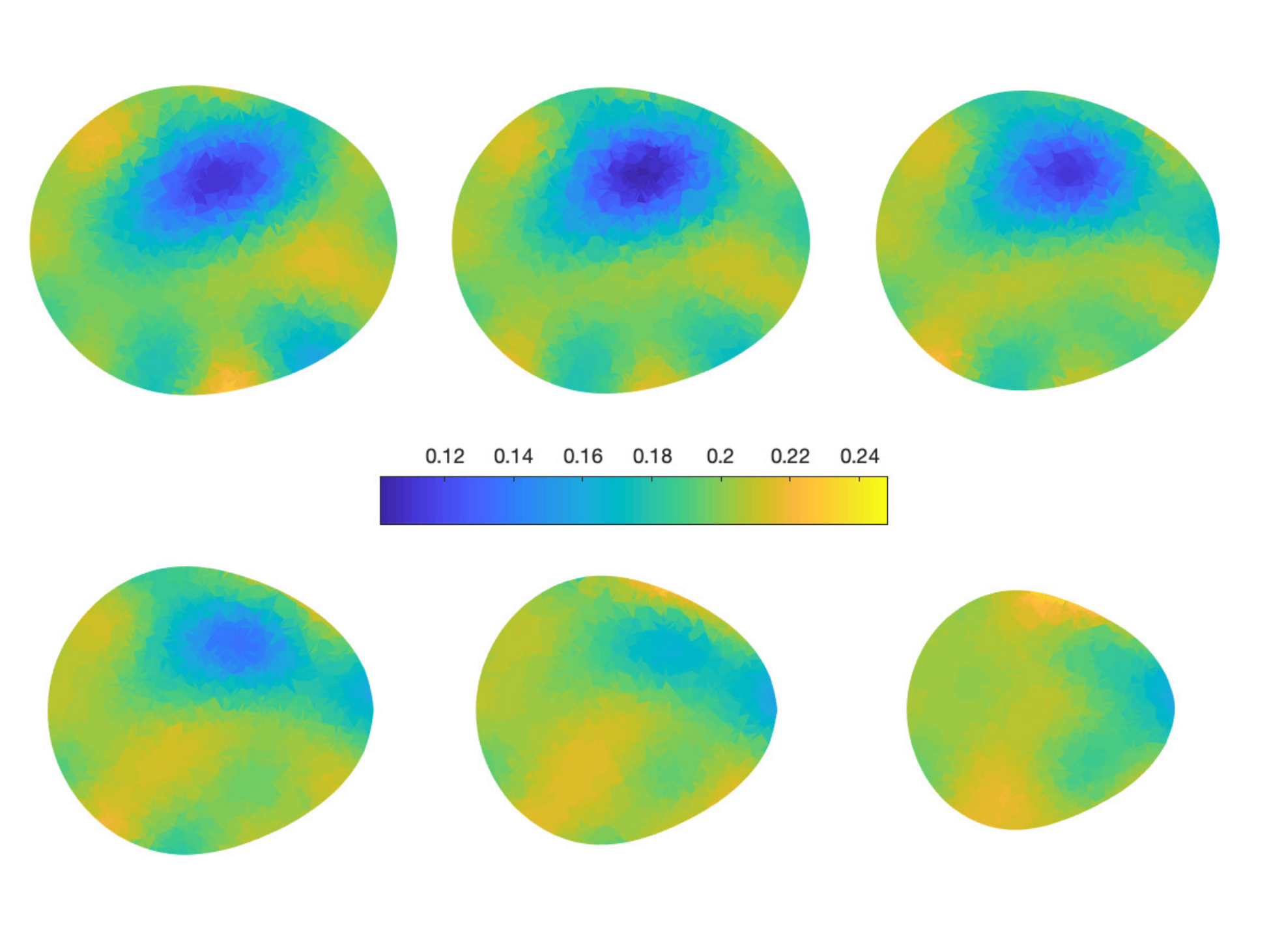}}
  }

  \smallskip
  
  \center{
    {\includegraphics[width=6cm]{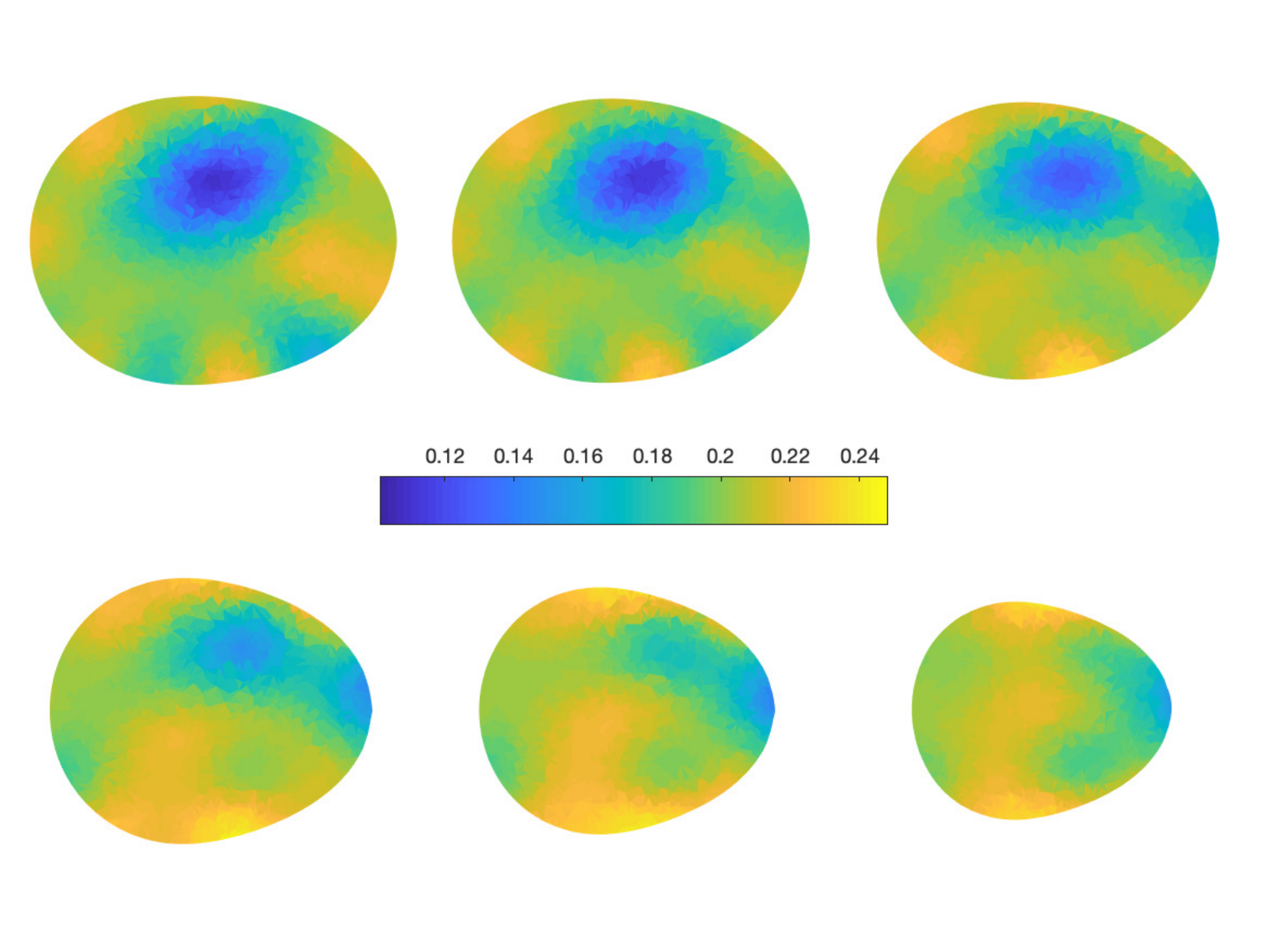}}
    \quad
{\includegraphics[width=6cm]{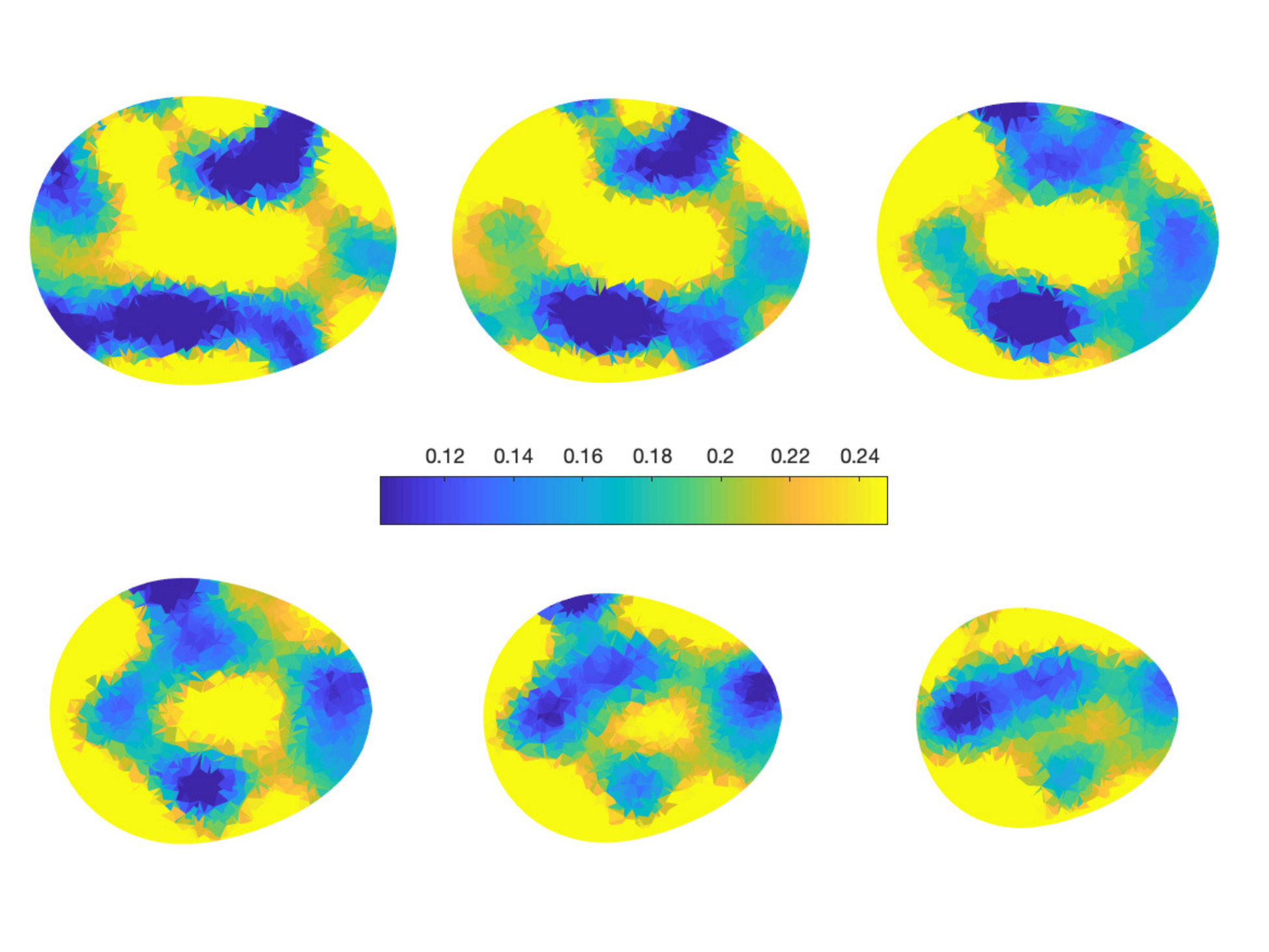}}
  }
\caption{Six horizontal cross-sections of the target conductivity and of three reconstructions for test~2; the slices are at levels $2$, $3$, $4$, $5$, $6$ and $7$\,cm. Top left: target conductivity (the colormap does not cover the whole dynamic range). Top right: reconstruction by the complete Algorithm~\ref{alg:1}. Bottom left: reconstruction when $\alpha = 0$ is fixed. Bottom right: reconstruction when $\alpha = 0$, $\theta = \bar{\theta}$ and $\phi = \bar{\phi}$ are fixed.}
  \label{fig:reco2_2}
\end{figure}

The target geometry is compared with the one reconstructed by the complete Algorithm~\ref{alg:1} in Figure~\ref{fig:reco2_1}; this time the Gauss--Newton iteration converged in eight rounds and about 25 minutes.  The shape parameter vector converged to $\alpha_* = [2.47, -5.72, 1.82, 1.76, 0.32]^{\rm T}$, whereas the first five components of $\alpha_{\rm trgt}$ are $1.62$, $-5.50$, $0.24$, $2.57$ and $-3.17$. Even though the estimates for the shape parameters are not very accurate, the reconstructed head shape anyway resembles the target head to a certain degree. Even more obviously, the electrode pattern has converged toward the target setup; see the right-hand image of Figure \ref{fig:target_head1} for the geometric initial guess.

Figure~\ref{fig:reco2_2} presents the target conductivity together with three reconstructions. As in Figure~\ref{fig:reco1_1} of the first numerical test, the reconstructions correspond to the complete Algorithm~\ref{alg:1}, to ignoring the mismodeled head shape, and to paying no attention to the inaccuracies in either the electrode positions or the head model. This time around, the reconstruction ignoring all inaccuracies in the measurement setting did not converge in twenty Gauss--Newton iterations, leading to the intolerably bad reconstruction of the conductivity in the bottom right image of Figure~\ref{fig:reco2_2}. The other two reconstructions in Figure~\ref{fig:reco2_2} contain useful information about the conductivity inside the target head, although their quality is worse than in test 1, most probably due to the initial higher degree of uncertainty about the electrode positions and the contact conductances. The reconstruction produced by the complete algorithm in the top right image of Figure~\ref{fig:reco2_2} is arguably the best one, separating itself from the one ignoring the inaccuracy in the head model in the bottom left image by exhibiting a more constant background conductivity level. It is also worth noticing the shapes of the cross-sections in the reconstruction corresponding to the complete algorithm clearly mimic those of the target head.

\subsubsection{Test 3: two conductive inclusion and larger electrodes}

\begin{figure}[t]
   \center{
     {\includegraphics[width=6cm]{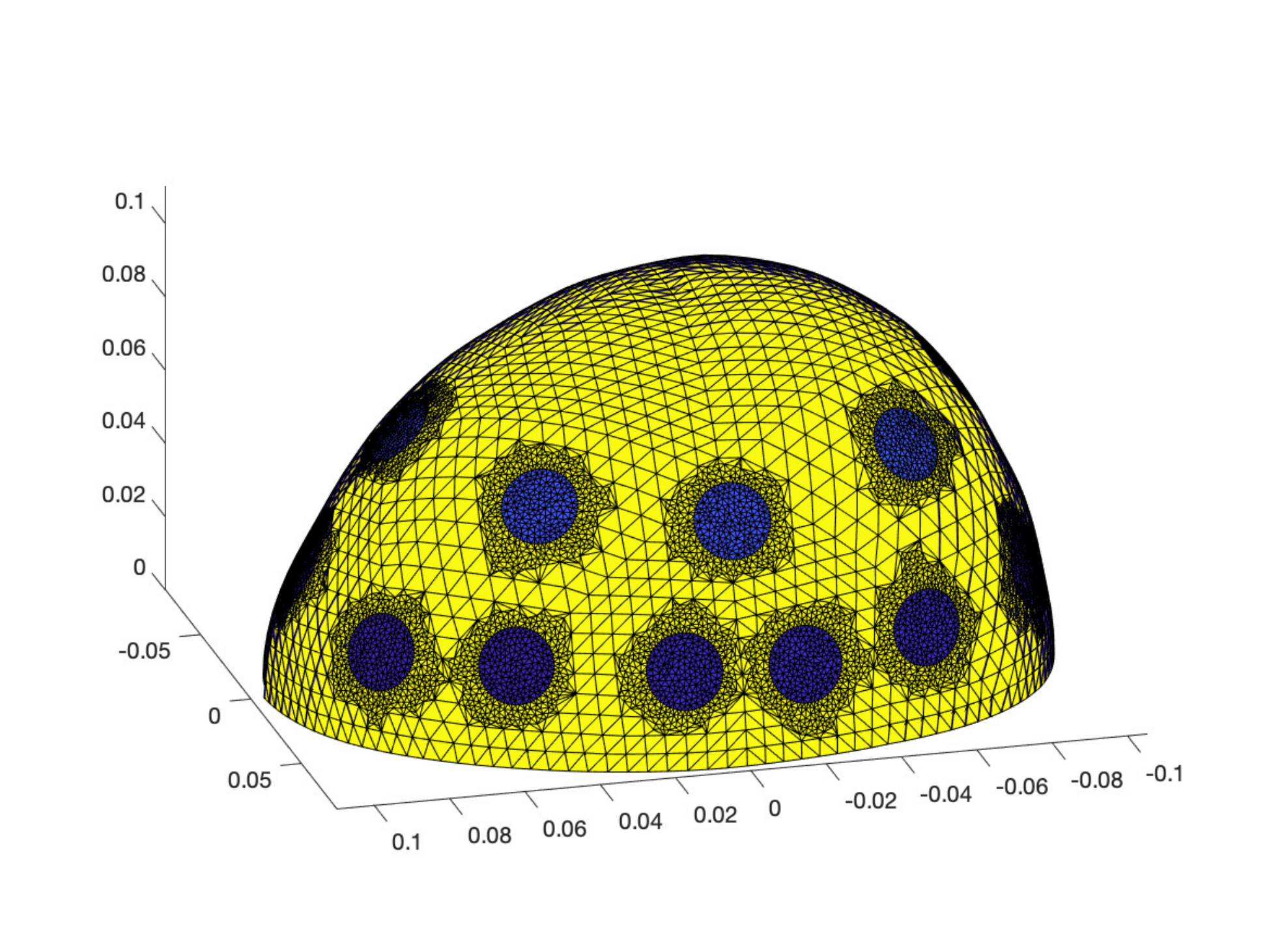}}
         {\includegraphics[width=6cm]{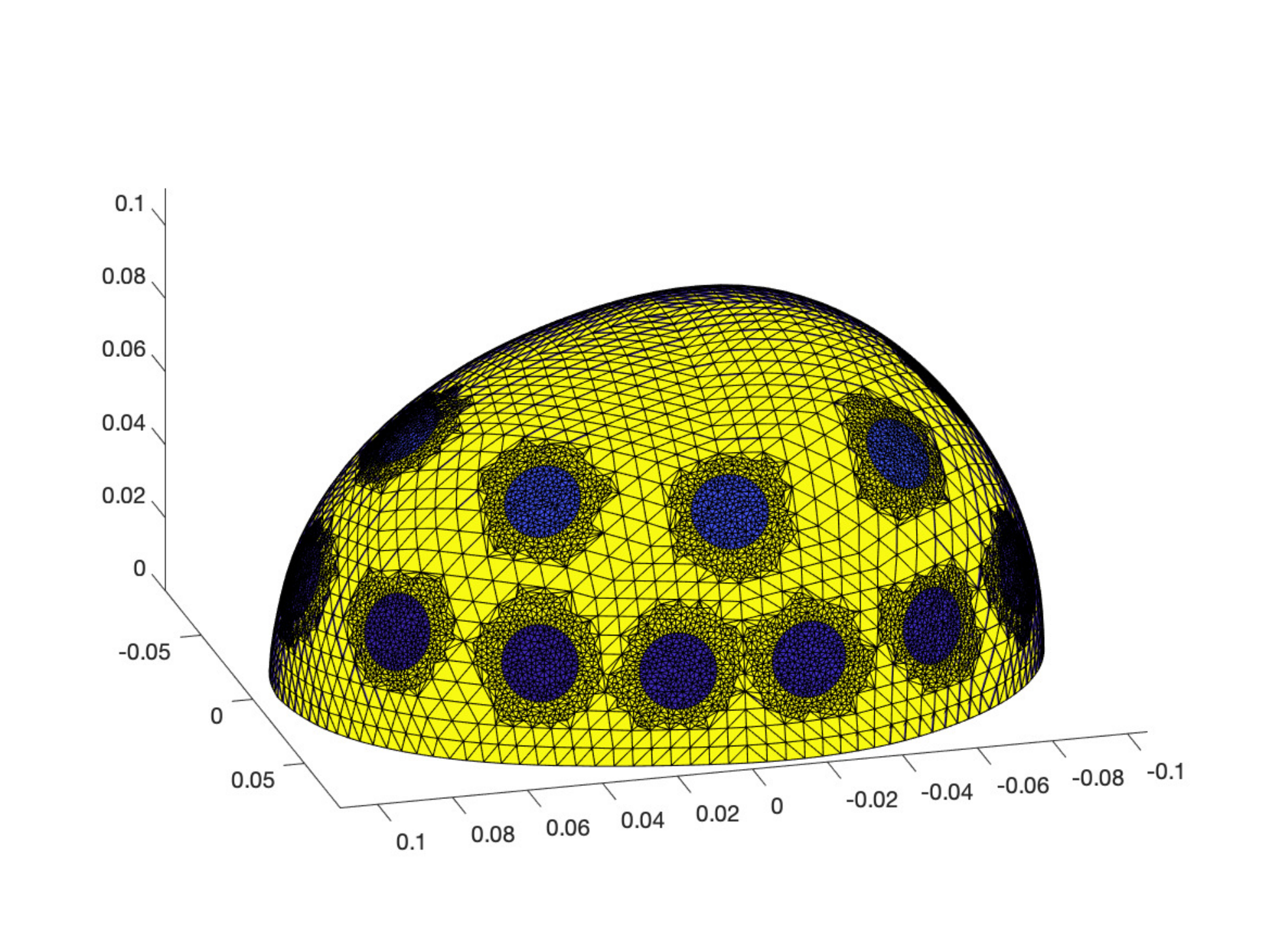}}
}
  \caption{Left: the third target head with the misplaced electrodes. Right: the corresponding initial guess for Algorithm~1 with electrodes at the intended positions. The unit of length is meter.}
  \label{fig:target_head3}
\end{figure}

In our final experiment, we consider an alternative measurement configuration with just two electrode belts and  a lower number of $M =  14 + 8 = 22$ electrodes. The electrode radius is also larger than before, namely $R = 1\,{\rm cm}$. The probability distributions from which the parameters $\zeta_{\rm trgt}$, $\theta_{\rm trgt}$ and $\phi_{\rm trgt}$ were drawn are the same as in the first experiment, apart from the means $\bar{\theta}$ and $\bar{\phi}$ that are determined by the intended electrode positions and a slightly lower level of uncertainty about the electrode position, i.e.~$\varsigma_\theta = \varsigma_\phi = 0.02$. As another novelty, the covariance for the head shapes is multiplied by two: $\alpha_{\rm trgt} \in \R^{10}$ was drawn from $\mathcal{N}(0, 2 \Gamma_\alpha)$ where $\Gamma_\alpha \in \R^{10 \times 10}$ is defined by \eqref{eq:Gamma_alpha}. This corresponds to allowing larger geometric variations in the head shape compared to what the library of \cite{Lee16} predicts. We assume to be aware of this fact and also replace $\Gamma_\alpha \in \R^{5 \times 5}$ by $2 \Gamma_\alpha$ in the Tikhonov functional \eqref{eq:tikhfun} and Algorithm~\ref{alg:1}. The noise model and the prior for the conductivity remain the same as in the preceding two tests. The target conductivity is characterized by two balls with a common constant conductivity level $\sigma_1 = \sigma_2 = 2\,\,{\rm S}/{\rm m}$, a common radius $r_1 = r_2 = 1.5$\,cm and center points $y_1=(-5,0,5)$\,cm and $y_2 = (4,-2, 3)$\,cm, respectively. The background conductivity level is still $\sigma_0 \equiv 0.2 \,{\rm S}/{\rm m}$.

The target head with the misplaced electrodes and the initial geometric guess for Algorithm~\ref{alg:1} are visualized in Figure~\ref{fig:target_head3}. Figure~\ref{fig:reco3_1}, which is organized in the same way as Figures~\ref{fig:reco1_1} and \ref{fig:reco2_2} above, visualizes the target conductivity and three reconstructions. The complete algorithm converged after eight rounds of Gauss--Newton iteration in about 25 minutes; the other two reconstructions were once again obtained considerably faster. The main conclusions are the same as in the previous two tests: The complete Algorithm~\ref{alg:1} and the one ignoring (only) the uncertainties in the head shape produce comparable reconstructions, with the former being somewhat more accurate and also being able to mimic the main characteristics of the shape of the target head. On the other hand, the reconstruction that also ignores the inaccuracies in the electrode positions is contaminated by more severe artifacts. 

The geometric target parameters $\alpha_{\rm trgt}$, $\theta_{\rm trgt}$ and $\phi_{\rm trgt}$ are compared with the ones reconstructed by the complete Algorithm~\ref{alg:1} in Figure~\ref{fig:reco3_2}. The reconstructed parameters once again exhibit similar patterns as the target values, but the correspondence is not perfect. It is worth mentioning that we also ran the complete algorithm with $\tilde{n} = 10$, i.e., with the number of the reconstructed and target shape parameters being the same, but this had essentially no effect on the reconstructions presented in Figures~\ref{fig:reco3_1} and \ref{fig:reco3_2}.

\begin{figure}[t]
    \center{
    {\includegraphics[width=6.3cm]{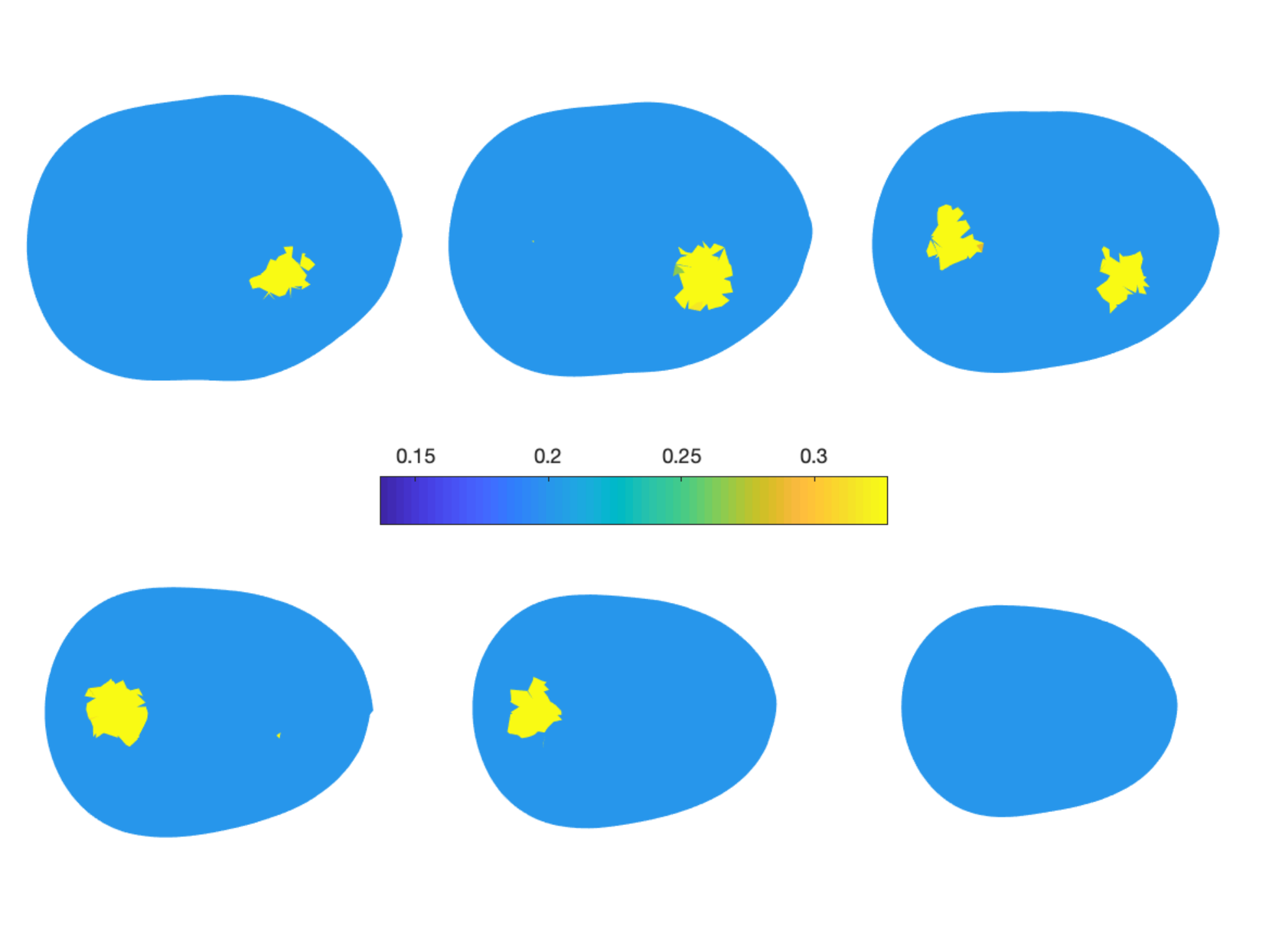}}
    \ \
{\includegraphics[width=6.3cm]{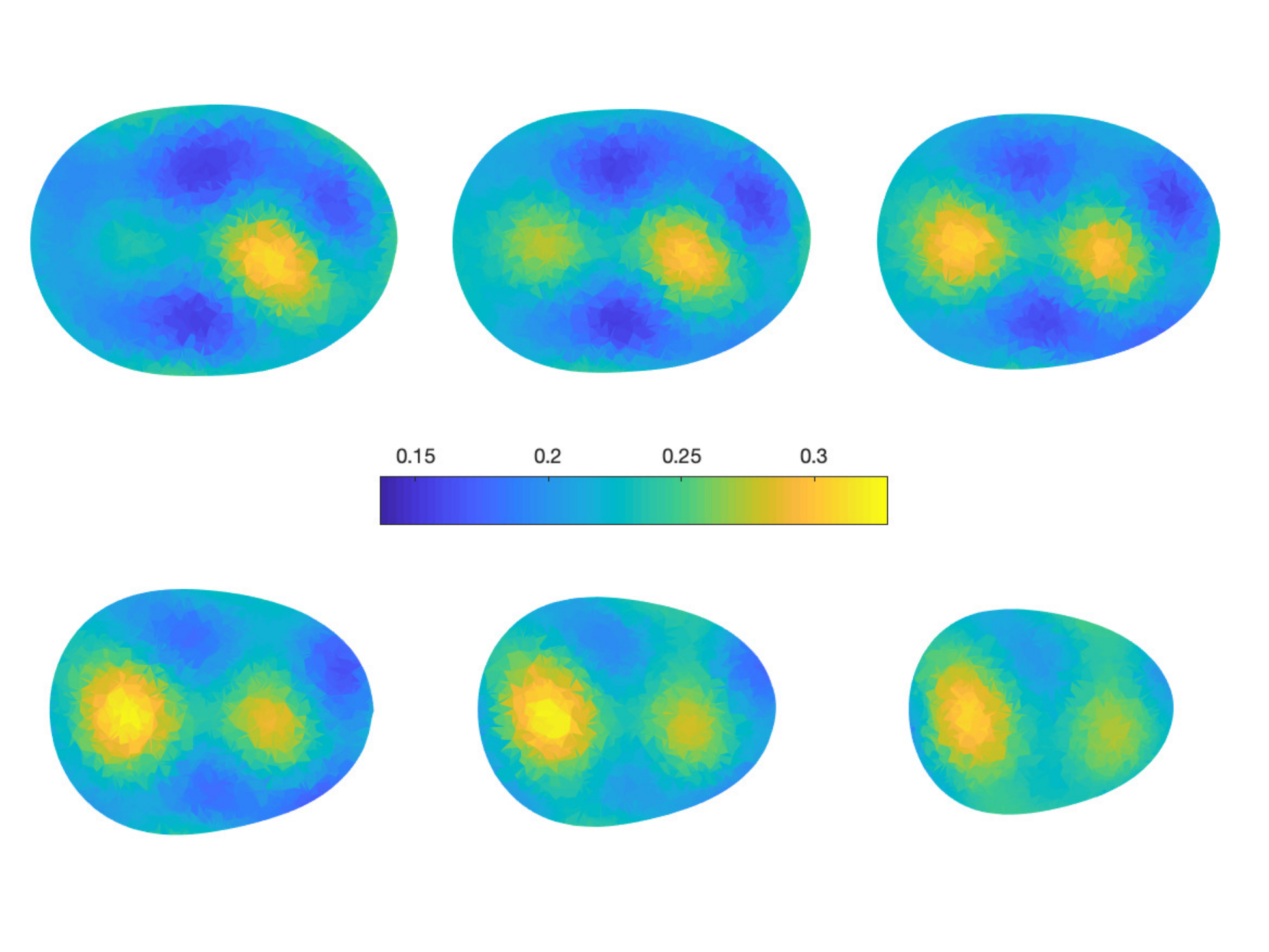}}
  }

  \smallskip
  
  \center{
    {\includegraphics[width=6.3cm]{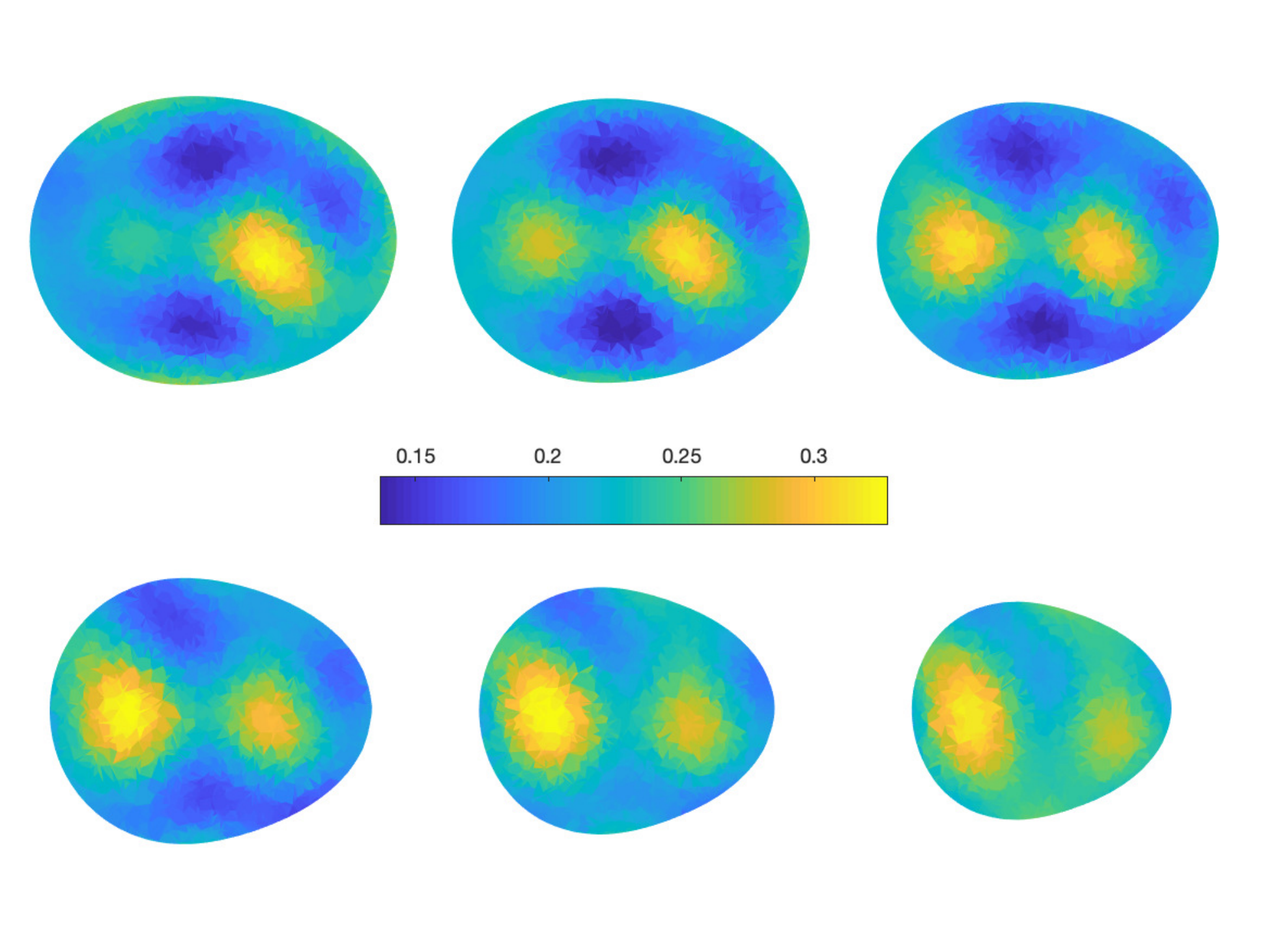}}
    \ \
{\includegraphics[width=6.3cm]{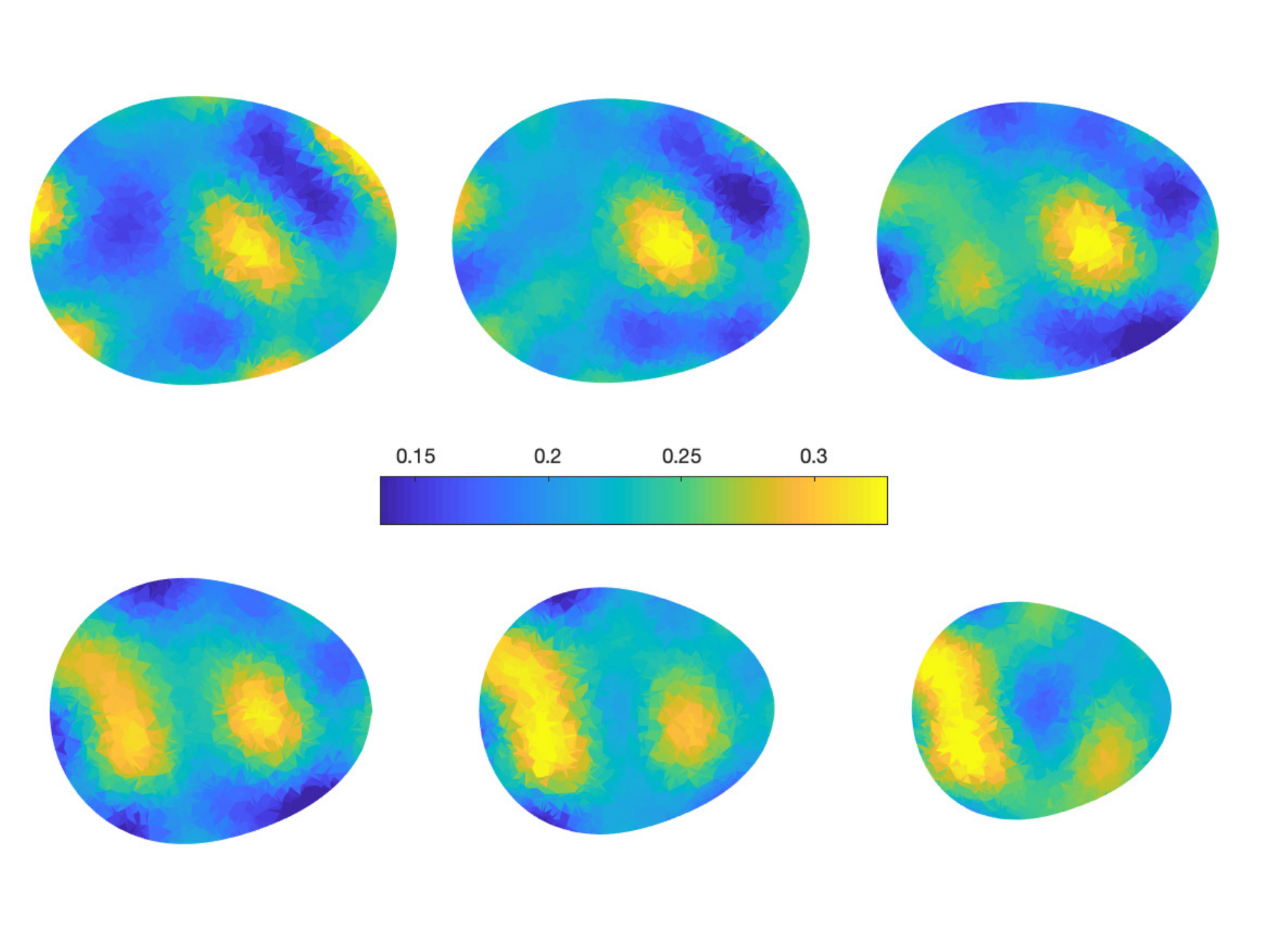}}
  }
\caption{Six horizontal cross-sections of the target conductivity and of three reconstructions for test~2; the slices are at levels $2$, $3$, $4$, $5$, $6$ and $7$\,cm. Top left: target conductivity (the colormap does not cover the whole dynamic range). Top right: reconstruction by the complete Algorithm~\ref{alg:1}. Bottom left: reconstruction when $\alpha = 0$ is fixed. Bottom right: reconstruction when $\alpha = 0$, $\theta = \bar{\theta}$ and $\phi = \bar{\phi}$ are fixed.}
  \label{fig:reco3_1}
\end{figure}

\begin{figure}[t]
  \center{
     {\includegraphics[width=3.5cm]{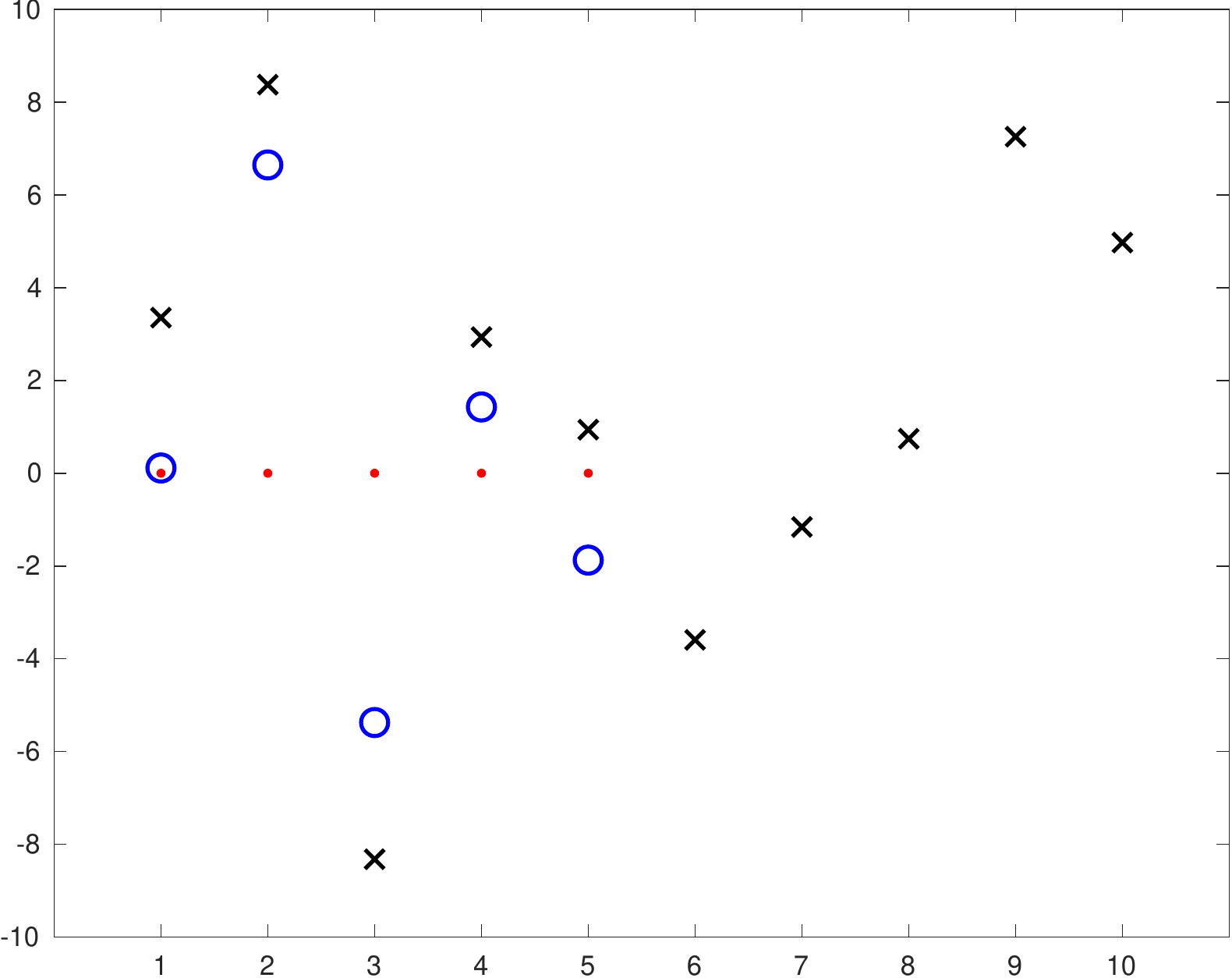}}
     \qquad
         {\includegraphics[width=3.5cm]{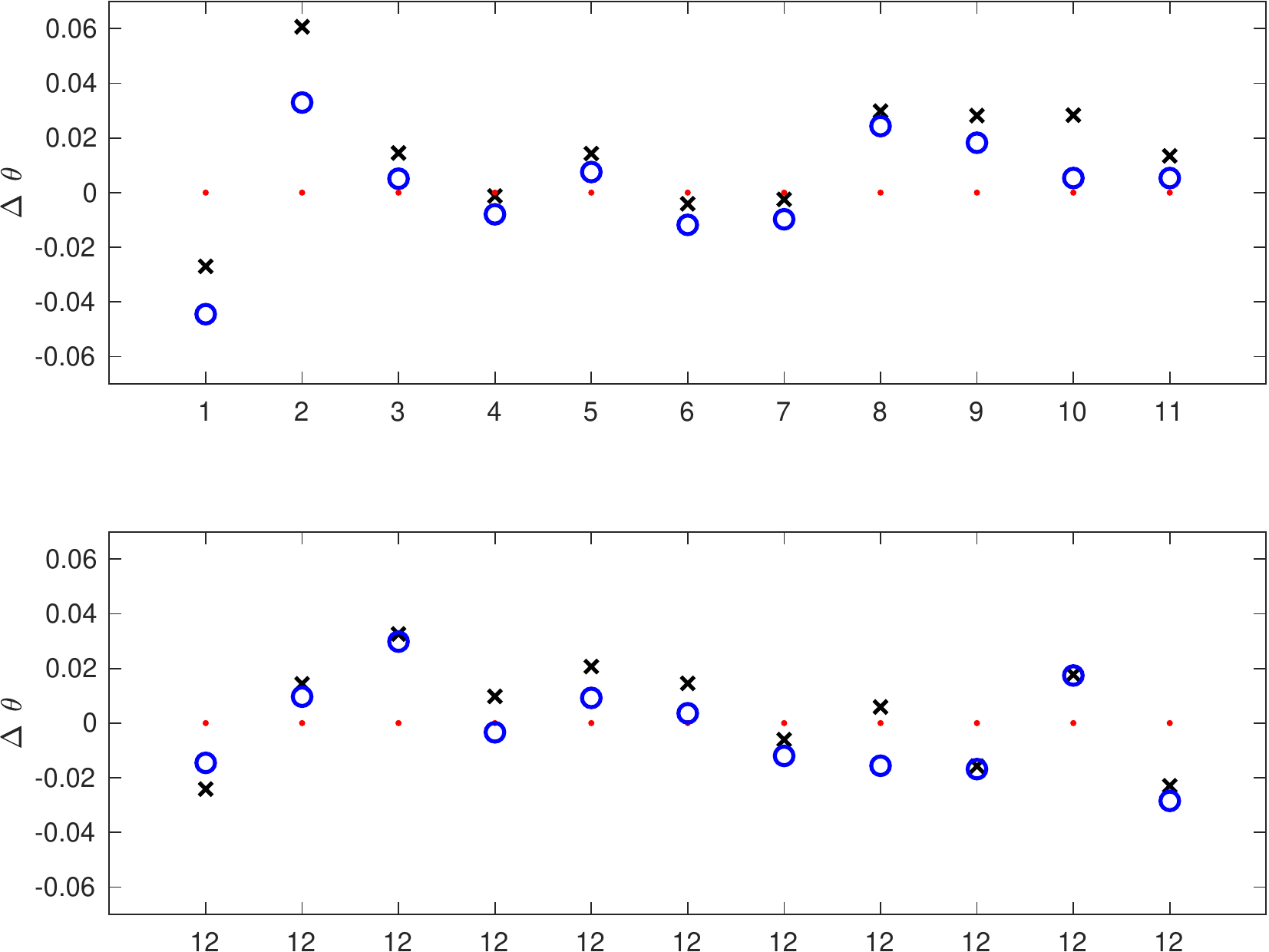}}
         \qquad
         {\includegraphics[width=3.5cm]{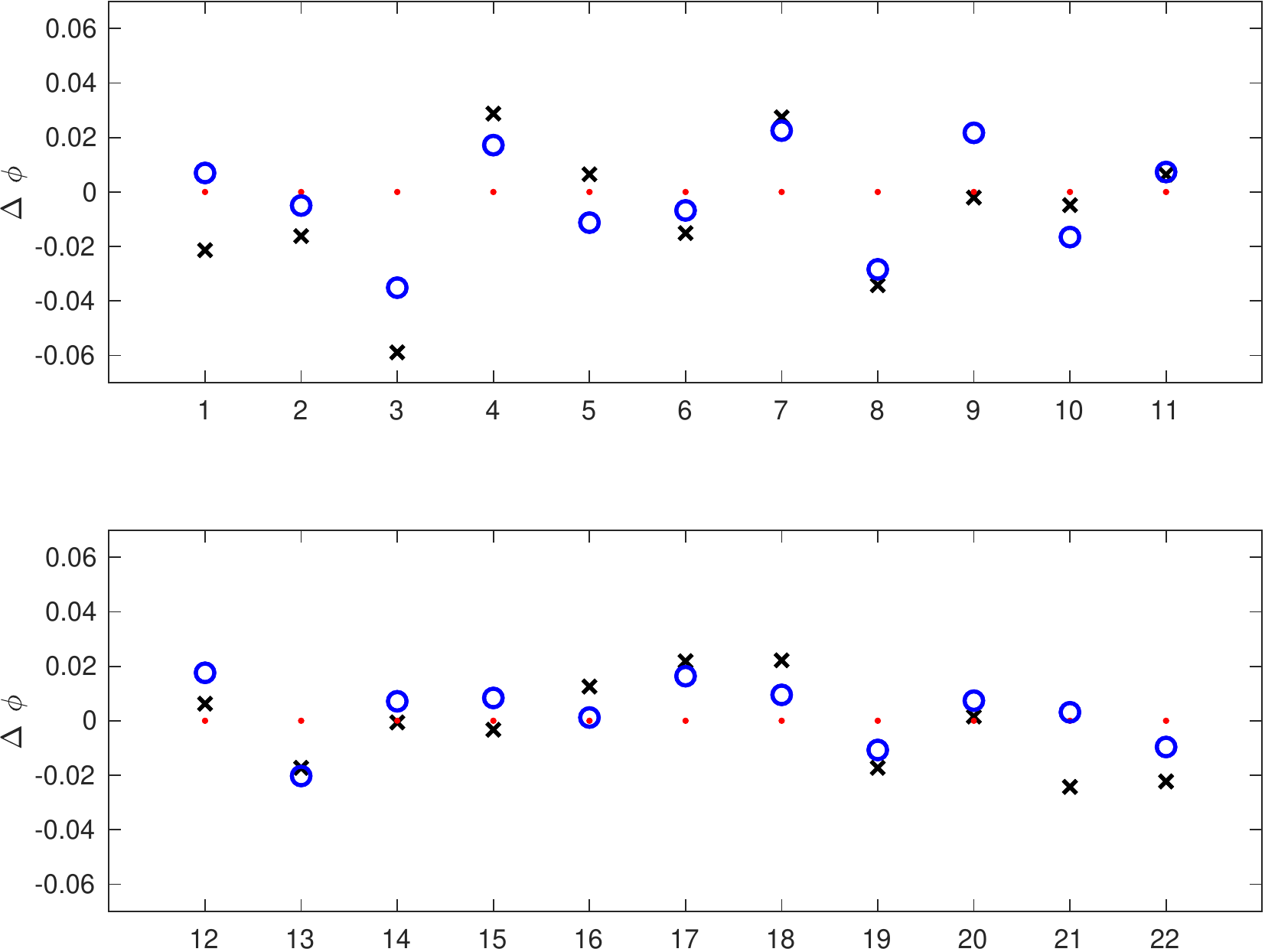}}
}
  \caption{Target parameters and those reconstructed by Algorithm~\ref{alg:1} for test~3. The electrodes are numbered starting from the frontal electrode on the bottom belt and circulating around the head in the positive direction when viewed from above. Left: $\alpha_*\in \R^5$ (circles), $\alpha_{\rm trgt}\in \R^{10}$ (crosses) and the expected values/initial guesses (dots). Center: $\theta_*$ (circles) and $\theta_{\rm trgt}$ (crosses). Right: $\phi_*$ (circles) and $\phi_{\rm trgt}$ (crosses). The electrode angles are depicted relative to their intended positions.}
  \label{fig:reco3_2}
\end{figure}

\section{Conclusion}
\label{sec:conclusion}

We have introduced a computational framework for applying absolute EIT to practical head imaging, with the leading idea being to reconstruct the head shape and the electrode positions as a part of a Newton-type output least squares algorithm. Our numerical experiments demonstrated that significant enough variations in the conductivity of an imaged head can be detected by EIT even if the uncertainties about the head shape and electrode positions are on the level that is to be expected in practice. Moreover, it seems that the natural variations in the head shape can be handled to a certain extent by (only) including the estimation of the electrode positions in a reconstruction algorithm implemented in a fixed average head geometry. That is, mismodeling the head shape can be partially compensated by letting the electrodes move freely during the reconstruction process (cf.~\cite{Hyvonen17c}).

Our investigations did not touch the most severe hindrance for introducing EIT as a head imaging modality, namely the highly resistive skull that makes it difficult to get reliable information about the conductivity in the brain. The most natural way to include the effect of skull in our framework is to employ a head library to form a joint principal component model for the head and skull shapes that are presumably strongly correlated. The joint head and skull geometry could then potentially be parametrized by a low number of shape parameters whose estimation could be included in a Newton-type reconstruction algorithm. Such considerations and testing our algorithm with real-world data are left for future studies.

\bibliographystyle{acm}
\bibliography{comphead-refs}
\end{document}